\newtheorem{thm}{Theorem}[section]
\newtheorem{lemma}{Lemma}[section]
\numberwithin{equation}{section}
\newcommand{\Real}{\mathbb{R}}
\tiny\color{gray},
\title{The Cayley-Moser problem with Poissonian arrival of offers}
\author{Guy Katriel\thanks{katriel@braude.ac.il}\\
	Department of Applied Mathematics,\\
	Braude College of Engineering, Karmiel, Israel}
\date{}
\begin{document}

\maketitle 

\begin{abstract}
We study a version of the classical Cayley-Moser optimal stopping problem, in which
a seller must sell an asset by a given deadline, with the
offers, which are independent random variables with a known distribution, arriving at random times, as a Poisson process.
This continuous-time 
formulation of the problem is much more
analytically tractable than the 
analogous discrete-time problem which 
is usually considered, leading to a simple differential equation that can be explicitly solved  to find the optimal policy.
We study the performance
of this optimal policy, and obtain explicit expressions for the distribution of the realized sale price, as well as for the distribution of the
stopping time. The general results 
are used to explore characteristics of the optimal policy and of the resulting bidding process, and are illustrated by application to 
several specific instances of the offer distribution.
\end{abstract}


	

\section{Introduction}

The Cayley-Moser problem \cite{Cayley,Moser} is a well-known 
optimal stopping problem, in which a seller 
must sell an asset by a given deadline. Potential buyers' price offers arrive sequentially, and are independent and identically distributed (iid) according to a known distribution (which will be called the `offer distribution'). The seller must decide whether to 
accept or reject each offer, with acceptance leading to sale at the offered price and the end of the process. Rejected offers cannot be recalled, and if the deadline is reached then the seller must accept some `salvage value'. The seller's aim is to choose a policy - a rule for deciding whether 
or not to accept an offer, depending on its 
value and on the time remaining until the deadline, 
so as to maximize the expected sale price. The problem thus addresses, in a simple context, the fundamental tradeoff between the potential gain and the risk from waiting.

The standard version of this problem is a `discrete-time' one, in which the number of offers until the deadline is a given integer $n$ \cite{Bauerle,DeGroot,Ferguson,Gilbert,Guttman,Hayes,Moser}. Analysis of this problem leads to an optimal policy defined by an increasing sequence of threshold values (reservation prices) $\mu_n$ ($n=1,2,3,\cdots$), so that an offer $x$ arriving when
the number of remaining offers is $n\geq 1$ is  
accepted if $x\geq\mu_n$. The sequence $\{\mu_n\}$ is defined by 
the nonlinear recursion,
\begin{equation}\label{discrete}\mu_0=0,\;\;\;\mu_n=E[\max(X,\mu_{n-1})],\;\;n\geq 1,\end{equation}
where $X$ is a random variable distributed according to the offer distribution,
which implies that, although the values can be computed $\mu_n$ iteratively,  an explicit expression for these values is not 
available. We note that the value $\mu_n$ also represents the expectation of the realized sale price, when the maximal number of offers is $n$ and the optimal policy is followed.

In recent years considerable attention has been given to the Cayley-Moser problem, and generalizations of it, in the context of {\it{prophet inequalities}}, which bound the expectation of the 
realized sale price under the optimal policy from below, in terms of the expected price achieved by a `prophet' who can observe all offers in advance, and choose the maximal one. Relatedly, there has also been interest in obtaining simple policies, e.g. with only a single threshold rather than a time-varying one, which are sub-optimal but which can be proved to attain results approximating those of the optimal policy up to a factor (see \cite{Allaart,Correa,Hill,Livanos,Lucier} and references therein). In the present work, however, we are focused on the precise characterization of the optimal policy, and of its performance measures.

Here we study a continuous-time version
of the Cayley-Moser problem, in which the (iid) offers 
arrive as a Poisson process with given rate 
$\lambda$, and the time until the deadline is given by a positive real number $t>0$. 
The sought-after policy, which maximizes the expectation of the sale price, is defined by
a threshold function $\mu(t)$, so that an offer $x$ made when the remaining time to the deadline is $t$ will be accepted if $x\geq \mu(t)$.

It may be argued that this Poissonian model with random arrival times is more `realistic' than the discrete-time model which assumes a fixed and known number of offers, but the real motivation underlying this work 
is the observation that this model is in fact much more analytically tractable then the 
discrete-time model, allowing us to obtain a range of explicit results which cannot be obtained for the discrete-time model.

As already shown in \cite{Karlin} (see also \cite{Allaart,David,Elfving,Sakaguichi} for related results), in the Poissonian case the optimal policy $\mu(t)$ can be obtained in an explicit form, which 
is unattainable in the discrete case. This stems from the fact that while the discrete model 
leads to a sequence of thresholds defined by the
nonlinear recursion \eqref{discrete}, which cannot be solved in closed form, the Poissonian model leads to a 
a simple differential equation for the optimal
policy $\mu(t)$, which can be solved by a direct integration (see subsection \ref{optimal_policy}). As an example, when
the offers are uniformly distributed on $[0,1]$,
the discrete model leads, using \eqref{discrete}, to the sequence defined by the recursion
\begin{equation}\label{disc}\mu_0=0,\;\;\;\mu_n=\frac{1}{2}(1+\mu_{n-1}^2),\;\;n\geq 1,
\end{equation}
for which an explicit expression is not available (even the study of the precise  asymptotic behavior of the above-defined sequence as $n\rightarrow \infty$ is a delicate question \cite{entwistle,fitch}). 
By contrast, for the corresponding Poissonian model we obtain an 
{\it{explicit}} expression for the optimal policy, given by
(see subsection \ref{policy_uniform})
\begin{equation}\label{ex0}\mu(t)=1-\frac{2}{\lambda t+4}.
\end{equation}

In this work we will show, furthermore, that in the Poissonian model it is possible to obtain detailed and explicit 
results regarding the {\it{performance}} of the optimal policy, which have not been obtained in previous works.

The outcome of 
applying the optimal policy, starting at a time when the deadline is $t$ time units in the future, is a random variable $S_t$ - the {\it{realized sale price}}. We will see that the
expectation of this random variable is $E[S_t]=\mu(t)$ - so that the function $\mu(t)$ plays a dual role, both as the optimal policy and as the expected gain.
It is also important, however, to evaluate the {\it{variance}} of $S_t$, which will indicate the range of likely sale prices under the optimal policy.
In fact we will be able to obtain a formula for the full {\it{probability density}} of $S_t$. Here, again, derivation of
a differential equation and its solution, this time a first order linear differential equation, will play a key role.
The probability density takes an interesting form,
with different analytic expressions in each of three intervals. Such explicit results on the probability distribution of the realized sale price cannot be attained for the discrete-time version of the model.

Another important performance metric to evaluate  is the {\it{time to sale}}, that is the stopping time: assuming the deadline for performing the sale is initially $t$ time units in the future, how much of this available duration will be exploited, when the optimal policy is followed? 
We will obtain an explicit formula for the distribution of the random variable $T_t$, the time to sale, assuming the optimal policy is followed starting when the time to the deadline is $t$. Consequently, we will also obtain expressions for the expectation and the variance of $T_t$. In previous literature \cite{entwistle,Mazalov}, {\it{asymptotic}} results for the 
expectation and variance of the stopping time have been obtained
for the discrete-time version of the problem. We will show that our exact 
formulas for the continuous-time case have
the same asymptotics.

All the results described above will be 
illustrated by application to several specific offer 
distributions. They can be applied to any desired offer distribution, although it may be necessary to evaluate some integrals numerically.
We also carry out some numerical simulations of the bidding process, and demonstrate the perfect fit of our analytical results with the statistics collected from the simulations.

This work is divided into four subsequent sections. In section \ref{optimal_policy0} we derive 
the optimal policy function $\mu(t)$, and use this result to explore general properties of this function, giving a
characterization of functions that can arise as optimal policies, as well as bounds on this function in terms of moments of the offer distribution.
In section \ref{price_distribution}
we obtain the full distribution of the realized sale price $S_t$, and compute the variance of this distribution. In
section \ref{time_to_sale} we  obtain an explicit expression for
the distribution of the time to sale 
$T_t$, and use it to compute the expectation and variance of $T_t$.
In section \ref{asy} we 
use the results obtained in the
previous sections to obtain the asymptotics of the expectation and variance of the sale price, as $t\rightarrow \infty$, under 
appropriate assumptions on the tail
behavior of the offer distribution. We also study the the limiting distribution of $\frac{1}{t}T_t$ as 
$t\rightarrow \infty$. 

More detailed overviews of the contents are given at the beginning of each section.

\section{The optimal policy}
\label{optimal_policy0}

After defining the bidding process
with Poissonian offer arrivals in subsection \ref{setup}, in subsection \ref{optimal_policy} we will derive the 
expression for the optimal policy $\mu(t)$, a result which has already 
been derived in \cite{Karlin}, though we 
use a different approach to obtain it, which will also serve us in the derivation of the distribution of the realized sale price in section \ref{price_distribution}. In subsection \ref{mu_examples} we apply the general  result to determine explicit expressions for optimal policies for some specific examples of 
offer distributions.
In subsection \ref{characterization}
we will derive some general properties 
of the optimal policy function $\mu(t)$,
and prove that these properties completely characterize the class of functions which may arise as optimal policies. In subsection \ref{moments} we show that under certain restrictions on the right tail of the offer distribution, we can obtain simple bounds on the growth of the optimal policy function $\mu(t)$.

\subsection{Problem setup and notation}
\label{setup}

The inputs to the problem are:

\begin{itemize}
	
	\item The {\it{marketing period}}: a time duration $t$ 
	during which the sale must be completed.
	It should be kept in mind that 
	as time proceeds, $t$ {\it{decreases}}, with $t=0$ corresponding to the deadline.
	
	\item The rate $\lambda$ of arrival of offers. Offers arrive as a Poisson process, so that 
	the distribution of the time durations $D$ between 
	consecutive offers is given by
	\begin{equation}\label{pp}P(D\leq d)=1-e^{-\lambda d}.
	\end{equation}
	
	\item The {\it{offer distribution}}  from which 
	the iid price offers are to be drawn. This distribution will 
	be characterized by a cumulative distribution function (CDF) $F:\Real\rightarrow [0,1]$. We do not restrict the offers to be non-negative. We will use $X$ to denote a generic random variable which
	is distributed according to $F$:
	\begin{equation}\label{xx}P(X\leq x)=F(x).\end{equation}
	Denoting $X_+=max(X,0)$, it will be assumed that 
	\begin{equation}\label{fm}
		E[X_+]=\int_{0}^\infty xdF(x)<\infty.\end{equation}
	If the support of the offer distribution is bounded from above, we will denote the maximum of its support by
	\begin{equation}\label{def_M}M=\sup\; \{ x\;|\; F(x)<1\},\end{equation}
	with $M=+\infty$ if the support is unbounded.
	If $F$ is absolutely continuous, we will denote 
	the corresponding probability density by $f(x)$.
	
	\item The {\it{residual price distribution}} (or `salvage value'):
	If the deadline is reached before an offer has been accepted, the asset will be sold at a price drawn from this distribution. We denote the corresponding cumulative distribution function by 
	$F_0:\Real\rightarrow [0,1]$, and a generic random variable with this distribution by $X_0$. We will assume
	\begin{equation}\label{fm0} E[|X_0|]=\int_{-\infty}^\infty |x|dF_0(x)<\infty.
	\end{equation}
	Defining 
	\begin{equation}\label{def_mu0}\mu_0\doteq E[X_0]=\int_{-\infty}^\infty xdF_0(x),
	\end{equation}
	we will also assume that
	\begin{equation}\label{mM}0\leq \mu_0<M,\end{equation}
	where $M$ is given by $\eqref{def_M}$. 
	The purpose of the assumption $\mu_0<M$ is 
	to avoid the trivial situation
	$\mu_0\geq M$ in which any offer one can receive during the marketing period is no higher than the expected residual price, so that to maximize the expected sale price one should always wait until the deadline.
	The assumption that $\mu_0\geq 0$ makes the formulation of some results more convenient, and it represents no real loss of generality, since if $E[X_0]<0$ one can 
	always add a constant to both $X_0$ and $X$,
	to obtain a problem which is equivalent, and which satisfies $\mu_0\geq 0$. Note that \eqref{mM} implies $M>0$, so that not all potential offers are negative.
\end{itemize}

During the bidding process, offers arrive as
a Poisson process with inter-arrival durations
$D_i$ ($i=1,2,..$), so that the time $t_i$ remaining when the $i$-th offer arrives is 
\begin{equation}\label{ti}t_i=(t_{i-1}-D_i)_+=\max(t_{i-1}-D_i,0),\;\;\;t_0=t,\end{equation}
where $D_i$ are iid and distributed according to \eqref{pp}. The size of the $i$-th offer is $X_i$, iid random variables distributed according to \eqref{xx}. The seller must decide 
whether to accept the offer $X_i$, in which case
the bidding process ends with payoff (realized sale price) $S_t=X_i$ and stopping time (time to sale) $T_t=t-t_i$, or to wait for the next offer, in which case the current offer cannot be recalled. The process also ends
if $t_i$ given by \eqref{ti} satisfies $t_i= 0$, so that the deadline is reached,  in which case the payoff is a random value $S_t=X_0$ distributed according to the residual price distribution $F_0$ and the stopping time is $T_t=t$.

Regarding the residual price distribution, we can envisage different scenarios depending on the nature of the asset to be sold. If the asset is ephemeral, {\it{e.g.}} a concert ticket or perishable food, then
its value drops to $0$ after the deadline (the concert date or expiration date), so that $X_0=0$, hence $F_0(x)=1$ for all $x>0$. Another possibility, which will be
used in our examples, is that $F_0=F$, which can be interpreted to mean that if the deadline is reached before an offer has been accepted then the seller waits until the arrival of the next offer, and accepts it unconditionally.

We have thus defined two important 
random variables, parameterized by the marketing period $t$, associated with the bidding process, and assuming the optimal policy is followed: the realized sale price $S_t$ and the time to sale $T_t$. After determining the optimal policy, we will study the distribution of these two random variables.

\subsection{Determination of the optimal policy}
\label{optimal_policy}

We define the value function $V(t,x)$:
assuming an offer of $x$ arrives when the time to 
the deadline is $t$, $V(t,x)$ is the maximum
expected value that can be achieved. If the offer 
is accepted then $V(t,x)=x$, while if
it is rejected then this value will be 
\begin{equation}\label{dmu}\mu(t)\doteq E[V((t-D)_+,X)],\end{equation}
where $D$ is the duration to the 
next offer, which is exponentially distributed 
with rate $\lambda$, and $X$ is distributed according to the offer distribution, with $X,D$ independent. We thus have
the Bellman equation
\begin{equation}\label{Bellman}V(t,x)=\max(x,E[V((t-D)_+,X)])=\max(x,\mu(t)),\end{equation}
together with the boundary condition:
\begin{equation}\label{bc}V(0,x)=\mu_0,\end{equation}
where $\mu_0=E[X_0]$ is the expectation of the 
residual distribution $F_0$.

By \eqref{Bellman}, an offer of $x$ at time $t$ to the deadline should be accepted if $x\geq \mu(t)$ (in fact if $x=\mu(t)$ one is indifferent between acceptance and rejection, but since this is an event of probability $0$ none of our result will be affected by the decision in such a case). Thus $\mu(t)$ provides us with the threshold defining the optimal policy.

To find the function $\mu(t)$, we first use \eqref{dmu},\eqref{bc} to write
\begin{align}\label{c1}\mu(t)&=E[V((t-D)_+,X)]=P(D\geq t)\mu_0+ \int_0^t \lambda e^{-\lambda (t-s)} E[V(s,X)]ds\nonumber\\
	&=e^{-\lambda t}\mu_0+ e^{-\lambda t}\int_0^t \lambda e^{\lambda s} \left(\int_{-\infty}^\infty V(s,u)dF(u) \right)ds.\end{align}
Now, using \eqref{Bellman}, we compute
\begin{align}\label{c2}\int_{-\infty}^\infty V(s,u)dF(u)&=\int_{-\infty}^\infty \max(u,\mu(s))dF(u)\nonumber\\
	&=\mu(s)\int_{-\infty}^{\mu(s)}dF(u)+\int_{\mu(s)}^\infty udF(u)
	=\mu(s)F(\mu(s))+\sigma(\mu(s)),
\end{align}
where in the last term we use the function defined 
by
\begin{equation}\label{def_sigma}
	\sigma(x)=\int_x^\infty udF(u).
\end{equation}
Combining \eqref{c1} and \eqref{c2}, we have
the following Volterra-type integral equation:
\begin{equation}\label{int}\mu(t)=e^{-\lambda t}\mu_0+ e^{-\lambda t}\int_0^t \lambda e^{\lambda s} \left[\mu(s)F(\mu(s))+\sigma(\mu(s)) \right]ds.\end{equation}
Note that by setting $t=0$ we obtain that
\begin{equation}\label{init0}
	\mu(0)=\mu_0.
\end{equation}
To solve \eqref{int}, we multiply both sides by
$e^{\lambda t}$ and then differentiate both sides, leading to
$$e^{\lambda t}[\mu'(t)+\lambda \mu(t)]=\lambda e^{\lambda t}\left[\sigma(\mu(t))+F(\mu(t))\mu(t)  \right],$$
which we can write as 
\begin{equation}\label{de1}\mu'(t)=\lambda \sigma(\mu(t))-\lambda [1-F(\mu(t))]\mu(t)  .\end{equation}
Using an integration by parts, we have
\begin{eqnarray}\label{calc0}
	\sigma(x)-(1-F(x))x&=&\int_x^\infty udF(u)-x\int_x^\infty dF(u)=\int_x^\infty (u-x)dF(u)=
	-\int_x^\infty (u-x)d[1-F(u)]\nonumber\\
	&=&-(u-x)[1-F(u)]\Big|_{u=x}^{u=\infty}
	+\int_x^\infty [1-F(u)]du=\int_x^\infty [1-F(u)]du,
\end{eqnarray}
so that, defining the function
\begin{equation} \label{defphi}\phi(x)=\int_x^\infty [1-F(u)]du,
\end{equation}
we have obtained the following
\begin{thm}\label{optimal}
	The optimal policy $\mu(t)$ is given by the solution of the initial value problem
	\begin{equation}\label{ode}
		\begin{cases}
			\mu'(t)=\lambda \phi(\mu(t))\\
			\mu(0)=\mu_0,
		\end{cases}
	\end{equation}
	where $\phi$ is defined by \eqref{defphi}.
	\eqref{ode} can be integrated to obtain the 
	explicit expression
	\begin{equation}\label{ef}\mu(t)=\Psi^{-1}(\lambda t),\end{equation}
	where $\Psi^{-1}:[0,\infty)\rightarrow [\mu_0,M)$ is the inverse of the function
	\begin{equation}\label{def_Psi}\Psi(x)\doteq\int_{\mu_0}^x \frac{du}{\phi(u)},\;\;\;\; x\in [\mu_0,M).\end{equation}
\end{thm}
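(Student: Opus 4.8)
The plan is to take the derivation already carried out in the text as the skeleton and to fill in the analytic gaps. Equations \eqref{int}--\eqref{de1} show that, \emph{assuming} the optimal policy $\mu$ is a well-defined, sufficiently regular function, it must satisfy the initial value problem \eqref{ode}, the integration-by-parts identity \eqref{calc0} being exactly what rewrites \eqref{de1} as \eqref{ode}. So the proof splits into two tasks: (i) establish that $\mu$ is well-defined and $C^1$, so that the passage from the Volterra equation \eqref{int} to the differential equation \eqref{de1} is rigorous; and (ii) integrate \eqref{ode} and verify that the formula \eqref{ef} makes sense, i.e.\ that $\Psi$ in \eqref{def_Psi} is a bijection of $[\mu_0,M)$ onto all of $[0,\infty)$. (For complete rigor one would also invoke a standard verification argument to confirm that the threshold rule with threshold $\mu(t)$ is genuinely optimal, i.e.\ that the value function satisfies \eqref{Bellman}--\eqref{bc}; I treat this, as the text does, as given.)

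For (i) I would read \eqref{int} as a fixed-point equation $\mu=\mathcal{T}\mu$ on $C[0,\tau]$, where $(\mathcal{T}\mu)(t)=e^{-\lambda t}\mu_0+e^{-\lambda t}\int_0^t\lambda e^{\lambda s}g(\mu(s))\,ds$ and $g(x)\doteq E[\max(X,x)]=xF(x)+\sigma(x)$ is the bracketed expression in \eqref{int} (see \eqref{c2}). Assumption \eqref{fm} makes $g$ finite on $\Real$ (since $\max(X,x)\le X_+ +|x|$), and $g$ is $1$-Lipschitz because $|\max(a,x)-\max(a,y)|\le|x-y|$ for every $a$. Hence for $\mu_1,\mu_2\in C[0,\tau]$ one gets $\|\mathcal{T}\mu_1-\mathcal{T}\mu_2\|_\infty\le(1-e^{-\lambda\tau})\|\mu_1-\mu_2\|_\infty$, so $\mathcal{T}$ is a contraction on every $C[0,\tau]$ and \eqref{int} has a unique continuous solution on $[0,\infty)$. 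Continuity of $\mu$ makes $s\mapsto\lambda e^{\lambda s}g(\mu(s))$ continuous, so by the fundamental theorem of calculus $\mu\in C^1$, the differentiation performed in the text is legitimate, and $\mu$ solves \eqref{de1}, hence \eqref{ode}. Note also that $\phi$ in \eqref{defphi} is finite, since $\phi(\mu_0)\le\int_0^\infty[1-F(u)]\,du=E[X_+]<\infty$ by \eqref{fm} and $\mu_0\ge0$, and that $\phi$ is $1$-Lipschitz (its a.e.\ derivative is $-(1-F)$); together with $\mu(0)=\mu_0$ from \eqref{init0}, the Picard--Lindel\"of theorem gives uniqueness for \eqref{ode}.

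For (ii), \eqref{def_M} gives $F(u)<1$ for every $u<M$, so the integrand in \eqref{defphi} is strictly positive on $(x,M)$ and $\phi(x)>0$ for all $x\in[\mu_0,M)$; this interval is nonempty by \eqref{mM}. Hence $\Psi$ in \eqref{def_Psi} is well-defined, $C^1$ and strictly increasing on $[\mu_0,M)$, with $\Psi(\mu_0)=0$ and $\Psi'=1/\phi>0$. As long as $\mu(t)<M$ we may divide \eqref{ode} by $\phi(\mu(t))$ to obtain $\frac{d}{dt}\Psi(\mu(t))=\mu'(t)/\phi(\mu(t))=\lambda$, whence $\Psi(\mu(t))=\lambda t$, i.e.\ $\mu(t)=\Psi^{-1}(\lambda t)$ — provided $\lambda t$ lies in the range of $\Psi$.

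The step that is not pure bookkeeping, and which I expect to be the crux, is to show $\lim_{x\to M^-}\Psi(x)=+\infty$, equivalently $\int^{M}du/\phi(u)=\infty$; this simultaneously forces $\mu(t)<M$ for all $t$ (so the division above is valid on all of $[0,\infty)$) and shows $\Psi$ maps $[\mu_0,M)$ \emph{onto} $[0,\infty)$. If $M=\infty$, then $\phi$ is nonincreasing, so $\phi(u)\le\phi(\mu_0)<\infty$ for $u\ge\mu_0$ and $\int_{\mu_0}^\infty du/\phi(u)\ge\int_{\mu_0}^\infty du/\phi(\mu_0)=\infty$. If $M<\infty$, then $\phi(u)=\int_u^M[1-F(w)]\,dw\le M-u$, so $1/\phi(u)\ge1/(M-u)$ and again $\int_{\mu_0}^M du/\phi(u)=\infty$. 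In either case $\Psi:[\mu_0,M)\to[0,\infty)$ is a bijection, $\Psi^{-1}$ is defined on all of $[0,\infty)$, and $\mu(t)=\Psi^{-1}(\lambda t)$ is the unique (automatically global, since $\mu$ is nondecreasing and cannot blow up) solution of \eqref{ode}, which is \eqref{ef}. Everything else reduces to the computation already displayed in the text or to standard Volterra/ODE arguments.
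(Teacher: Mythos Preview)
Your proposal is correct and follows essentially the same route as the paper: the derivation of the Volterra equation \eqref{int}, its differentiation to obtain \eqref{de1}, and the integration-by-parts step \eqref{calc0} are exactly what the paper does in the text preceding the theorem, and your separation-of-variables argument and the two-case verification that $\Psi$ maps $[\mu_0,M)$ onto $[0,\infty)$ match what the paper establishes (the paper carries out that bijection argument later, in the proof of Theorem~\ref{char}, with the same $\phi(u)\le M-u$ bound for $M<\infty$ and the observation $\phi(u)\to 0$ for $M=\infty$).

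The one genuine addition in your write-up is the contraction-mapping argument on $C[0,\tau]$ to justify existence, uniqueness, and $C^1$-regularity of $\mu$ before differentiating \eqref{int}; the paper leaves this implicit and simply differentiates. Your argument is clean and correct (the $1$-Lipschitz property of $g(x)=E[\max(X,x)]$ giving contraction factor $1-e^{-\lambda\tau}$), and it is a worthwhile gap to fill, but it does not constitute a different strategy---it is the natural way to make the paper's heuristic derivation rigorous.
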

The function $\phi$ is of central importance throughout this work, since, as will be seen, all the explicit formulas for various quantities that will be obtained will involve this function.
We note that this function can also 
be given a probabilistic expression:
\begin{equation}\label{prob_phi}\phi(x)=E[(X-x)_+].\end{equation}
In case $F$ is absolutely continuous, so that a density $f=F'$ exists, we have an alternative expression for the function $\phi$, given by (see \eqref{calc0})
\begin{equation}\label{defphi1}
	\phi(x)=\int_x^\infty (u-x)f(u)du=\int_0^\infty uf(x+u)du.
\end{equation}

As a final important remark, note that, 
denoting by $S_t$ the random variable describing the realized sale price assuming the bidding process starts 
$t$ time units prior to the deadline and the optimal policy is employed, we have
\begin{equation}\label{Es}E[S_t]=E[V((t-D)_+,X)]=\mu(t)\end{equation}
($D$ is the time we wait for the first offer, and $X$ its value), hence the function $\mu(t)$, which we now know how to compute, describes both the threshold value for accepting an offer and the expected sale price, when the time to the deadline is $t$.

\subsection{Examples}
\label{mu_examples}

In the following we apply the result of 
Theorem \ref{optimal} to compute the 
optimal policy
for several examples of offer distributions $F$. Note that, since the
residual distribution $F_0$ 
influences the function $\mu(t)$ 
only through its expectation $\mu_0$, there is no need to assume a specific form for this distribution.

The explicit expressions for $\mu(t)$ 
allow us to easily obtain the asymptotic
behavior of $\mu(t)$ as $t\rightarrow +\infty$, and we compare these results 
with those obtained in \cite{entwistle}
in the discrete-time case, where explicit expressions for the optimal policy $\mu_n$ are not available, and show that they agree. A more general discuss of asymptotics of $\mu(t)$ is made in subsection \ref{asymptotics1} below.

\subsubsection{Uniform offer distribution}
\label{policy_uniform}

We assume the offers are uniformly 
distributed on $[a,b]$, so that 
their probability density is 
\begin{equation}\label{uniform_denisity}f(x)=\frac{1}{b-a}\chi_{[a,b]}(x),\end{equation}
where $\chi_{[a,b]}$ denotes the 
characteristic function of the interval 
$[a,b]$. The residual distribution
$F_0(x)$ can be arbitrary, but to avoid the trivial case in which any offer made before the deadline will be rejected, we assume  $\mu_0=E(X_0)\in [0,b)$ (see \eqref{mM}).

We then have, using \eqref{defphi1}, 
\begin{align}\label{phi_uniform}
	\phi(x)&=\frac{1}{b-a}\int_0^\infty u \chi_{[a,b]}(u+x)du=\frac{1}{b-a}\begin{cases}
		\int_{a-x}^{b-x}udu	&  x<a\\
		\int_{0}^{b-x} u du  &  a\leq x<b\\
		0	  &    x\geq b
	\end{cases}
		=\begin{cases}
			\frac{a+b}{2}-x	&  x<a\\
			\frac{(b-x)^2}{2(b-a)}  &  a\leq x<b\\
			0	  &    v\geq b
		\end{cases}
	\end{align}
	Because of the piecewise nature of the function $\phi$, we now need to 
	consider two cases:
	
	(1) Assuming first that $\mu_0\in [a,b]$, we have, using \eqref{def_Psi}, for $\mu_0\leq x< M=b$,
	\begin{align*}\Psi(x)&=\int_{\mu_0}^x \frac{dv}{\phi(v)}
		=2(b-a)\int_{\mu_0}^x \frac{dv}{(b-v)^2}
		=2(b-a)\left[\frac{1}{b-x}-\frac{1}{b-\mu_0} \right],
	\end{align*}
	so that, by \eqref{ef},
	\begin{equation}\label{mu_uniform_g}
		\mu(t)=\Psi^{-1}(\lambda t)=b-\frac{2(b-a)}{\lambda t+2\cdot \frac{b-a}{b-\mu_0}}.
	\end{equation}
	For example, in the case of a uniform distribution on $[0,1]$ ($a=0,b=1$), and assuming that reaching the deadline
	before making a sale leads to loss of the sale, we should take $\mu_0=0$ in the above expression, giving
	$$\mu(t)=\frac{\lambda t}{\lambda t+2},$$
	while in the case 
	that one accepts the first offer 
	following the deadline, we take $\mu_0=\frac{a+b}{2}=\frac{1}{2}$,
	giving \eqref{ex0}.
	
	(2) Assuming now that $\mu_0\in [0,a)$, we need to modify our calculation. For 
	$x\in [\mu_0,a]$ we have
	$$\Psi(x)=\int_{\mu_0}^x \frac{dv}{\phi(v)}=\int_{\mu_0}^x \frac{dv}{\frac{a+b}{2}-v}dv=\ln\left(\frac{a+b-2\mu_0}{a+b-2x} \right),$$
	while for $x\in [a,b)$
	$$\Psi(x)=\int_{\mu_0}^a \frac{dv}{\phi(v)}+\int_{a}^x \frac{dv}{\phi(v)}=\int_{\mu_0}^a \frac{dv}{\frac{a+b}{2}-v}dv+2(b-a)\int_{a}^x \frac{dv}{(b-v)^2}=\ln\left(\frac{a+b-2\mu_0}{b-a} \right)+2\cdot\frac{x-a}{b-x}.$$
	Defining
	$$t^*=\Psi(a)=\frac{1}{\lambda}\ln\left(\frac{a+b-2\mu_0}{b-a} \right),$$
	the above expressions lead to
	\begin{equation}\label{mu_uniform_g1}\mu(t)=\Psi^{-1}(\lambda t)=\begin{cases}
			\frac{1}{2}(a+b-(a+b-2\mu_0)e^{-\lambda t}) & 0\leq t\leq t^*\\
			b-\frac{2(b-a)}{\lambda (t-t^*)+2}   & t>t^*.
	\end{cases}\end{equation}
	Note that when $t<t^*$ we have 
	$\mu(t)<a$, and since offers are in the range $[a,b]$, this means that when the time to the deadline is less than $t^*$, all offers will be accepted. The precise 
	value of $\mu(t)$ is thus not important
	as a threshold when $t\leq t^*$, but it does provide the value $E[S_t]$.

	In terms of asymptotic behavior, \eqref{mu_uniform_g},\eqref{mu_uniform_g1} imply
	\begin{equation}\label{ba0}\mu(t)=b-\frac{2(b-a)}{\lambda t}+O\left(\frac{1}{t^2}\right)\;\;\;{\mbox{ as }} t\rightarrow +\infty,\end{equation}
	which is analogous to the asymptotic 
	formula obtained in \cite{entwistle} (eq. 20, replacing $n$ by $\lambda t$) for the discrete-time version of the problem, in which case a closed formula for the 
	threshold values is not attainable.
	
	
	
	
	\subsubsection{Exponential offer distribution}
	\label{policy_exponential}
	
	We assume now that the offers are exponentially distributed with mean $\eta$:
	\begin{equation}\label{exp}f(x)=\frac{1}{\eta}e^{-\frac{x}{\eta}},\;\;\;F(x)=1-e^{-\frac{x}{\eta}},\;\;\;x\geq 0.
	\end{equation}
	By \eqref{defphi} we have
	\begin{equation}\label{phi_exp}\phi(x)=\int_x^\infty e^{-\frac{u}{\eta}}du=\eta e^{-\frac{x}{\eta}},\end{equation}
	by \eqref{def_Psi},
	$$\Psi(x)=\int_{\mu_0}^x \frac{du}{\phi(u)}=\int_{\mu_0}^x \frac{1}{\eta}e^{\frac{u}{\eta}}du=e^{\frac{x}{\eta}}-e^{\frac{\mu_0}{\eta}},$$
	so, by \eqref{ef},
	\begin{equation}\label{mu_exp}\mu(t)=\Psi^{-1}(\lambda t)=\eta \ln\left(\lambda t+e^{\frac{\mu_0}{\eta}} \right).\end{equation}
	\eqref{mu_exp} agrees
	with the large $t$ asymptotic expression obtained in \cite{entwistle} (eq.14) for the 
	discrete-time version of the problem.
	
	\subsubsection{Pareto offer distribution}
	\label{policy_pareto}
	
	We consider the Pareto distribution (with $\alpha>1$)
	\begin{align}\label{pareto}f(x)=\begin{cases}
			0 &  x < x_m\\
			\frac{\alpha x_m^\alpha}{x^{\alpha+1}} & x\geq x_m
		\end{cases},\;\;\;F(x)=\begin{cases}
			0 &  x < x_m\\
			1-\left(\frac{x_m}{x} \right)^\alpha & x\geq x_m
		\end{cases}.\end{align}
	with expectation
	\begin{equation}\label{mu_0_pareto}E[X]=\frac{\alpha x_m}{\alpha-1}.\end{equation}
	The residual distribution is arbitrary, but to somewhat simplify calculations we will assume now that 
	$$\mu_0=E[X_0]\geq x_m.$$
	
	Using \eqref{defphi} we obtain, for 
	$x\geq x_m$,
	\begin{equation}\label{phi_pareto}
		\phi(x)=\int_x^\infty \left(\frac{x_m}{u} \right)^\alpha du=\frac{x_m^\alpha}{\alpha-1}\cdot \frac{1}{x^{\alpha-1}}.\end{equation}
	From \eqref{def_Psi} we obtain, for $x\geq \mu_0$,
	$$\Psi(x)=\frac{\alpha-1}{x_m^\alpha}\int_{\mu_0}^x v^{\alpha-1}dv=\frac{\alpha-1}{x_m^\alpha}\cdot\frac{1}{\alpha }\left[x^\alpha-\mu_0^\alpha \right]\;\;\;\Rightarrow\;\;\; \Psi^{-1}(w)=\left[\alpha\left(\frac{x_m^\alpha}{\alpha-1}\right) w+\mu_0^\alpha\right]^{\frac{1}{\alpha}},\;\;w\geq 0$$
	so \eqref{ef} gives
	\begin{equation}\label{mu_pareto_g}
		\mu(t)=\Psi^{-1}(\lambda t)=\left[\left(\frac{\alpha x_m^\alpha}{\alpha-1}\right) \lambda t+\mu_0^\alpha\right]^{\frac{1}{\alpha}}=\mu_0[c\lambda t+1]^{\frac{1}{\alpha}},\end{equation}
	where we have set
	\begin{equation}\label{defc}c=\frac{\alpha}{\alpha-1}\left(\frac{x_m}{\mu_0} \right)^\alpha.\end{equation}
	\eqref{mu_pareto_g} implies
	$$\mu(t)=x_m\left(\frac{\alpha }{\alpha-1}\cdot \lambda  t \right)^{\frac{1}{\alpha}}+O\left(\frac{1}{t^{1-\frac{1}{\alpha}}} \right)\;\;\;{\mbox{ as }} t\rightarrow +\infty,$$
	which is analogous to the large time asymptotics obtained in \cite{entwistle}
	(eq. 17) for the discrete-time version of the problem.

	\label{relations}

	\subsection{Characterization of optimal policy functions}
	\label{characterization}
	
	Since the function $\mu(t)$ plays two important roles, representing both the optimal sale policy and the expected value $E[S_t]$ of the realized sale price under this policy, it is 
	of interest to study its properties. For any given 
	offer distribution we can compute $\mu(t)$ using Theorem \ref{optimal} (as we have done in the examples in subsection \ref{mu_examples}), up to the possible
	need to compute an integral numerically, but we are also interested in deriving general characteristics of this function which are not dependent on the particular offer distribution, both those which are shared by the policy function for any choice of the distributions $F$,$F_0$, and those which are
	valid for certain classes of offer distributions.

	In the next theorem we derive a series 
	of properties that any optimal policy function $\mu(t)$ must possess. Moreover,
	the second part of the theorem shows
	that any function $\mu(t)$ which satisfies this list of properties is an optimal policy for some choice of the offer and residual distributions. Thus the properties listed in fact characterize the optimal policy functions, and any additional properties of optimal policy functions, which do not 
	follow from those in the list, must therefore depend on specific assumptions about the offer distribution. In section \ref{relations}, we will restrict 
	the offer distribution in several ways, to obtain further properties of $\mu(t)$. A natural question for future research is whether in the discrete-time case it is possible to obtain a result analogous to Theorem \ref{char}, characterizing the sequences $\{\mu_n\}$ which can be threshold sequences corresponding to some offer distribution.
	
	\begin{thm}\label{char}
		(1) Let $F,F_0$ be a pair of distributions satisfying \eqref{fm},\eqref{fm0},\eqref{mM}, and $\lambda>0$.
		Then the corresponding optimal policy $\mu: [0,\infty)\rightarrow [0,\infty)$ has the following properties:
		\begin{itemize}
			\item[(i)] $\mu(0)=\mu_0=\int_{-\infty}^\infty xdF_0(x)$ and  $\lim_{t\rightarrow +\infty}\mu(t)=M$, where $M$ is defined by
			\eqref{def_M}.
			
			\item[(ii)] $\mu$ is continuously differentiable.
			
			\item[(iii)] $\mu'$ is absolutely continuous and satisfies $\mu'(t)>0$ for all $t\geq 0$. In particular $\mu$ is strictly monotone increasing.
			
			\item[(iv)] $\mu(t)$ is strictly concave.
			
			\item[(v)] The function \begin{equation}\label{def_h}h(t)=-\frac{\mu''(t)}{\mu'(t)}
			\end{equation}
			(which, by (iii),(iv) is defined and positive {\it{a.e.}} on $[0,\infty)$) is a monotone decreasing function.
			
			\item[(vi)] The function $h$ satisfies
			\begin{equation}\label{h_cond}h(0+)\doteq\lim_{t\rightarrow 0+}h(t)\leq \lambda.
			\end{equation}
		\end{itemize}
		
		(2) Conversely, let $\mu:[0,\infty)\rightarrow [0,\infty)$ be any function
		satisfying (ii),(iii),(iv),(v) and let $\lambda$ satisfy \eqref{h_cond}.  Then, defining
		$M=\lim_{t\rightarrow \infty}\mu(t),$ there 
		exists a distribution $F$ satisfying 
		\eqref{fm}, such that for any distribution $F_0$ satisfying \eqref{fm0} with $E[X_0]=\mu(0)$, we have that 
		$\mu(t)$ is the optimal policy corresponding to $F,F_0$, and $\lambda$.
	\end{thm}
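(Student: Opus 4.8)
\emph{Part (1).} By Theorem~\ref{optimal}, $\mu$ is the unique solution of the initial value problem \eqref{ode}, i.e.\ $\mu'=\lambda\phi(\mu)$ with $\mu(0)=\mu_0$, where $\phi(x)=\int_x^\infty[1-F(u)]\,du$. I would first record the elementary facts that $\phi$ is nonnegative, nonincreasing, $1$-Lipschitz (as $0\le\phi(x)-\phi(y)\le y-x$ for $x\le y$), strictly decreasing on $(-\infty,M)$, identically $0$ on $[M,\infty)$, and differentiable with $\phi'=-(1-F)$ at each continuity point of $F$. The six properties then follow in turn. \emph{(i):} $\mu(0)=\mu_0$ is the initial datum; since $\mu'\ge0$, $\mu$ is nondecreasing, and since $\mu_0<M$ and $\phi\equiv0$ on $[M,\infty)$ it stays $<M$, so it increases to some $L\le M$; were $L<M$ we would have $\mu'(t)\to\lambda\phi(L)>0$, impossible for a convergent $\mu$, so $L=M$. \emph{(ii):} $\mu'=\lambda(\phi\circ\mu)$ is a composition of continuous functions. \emph{(iii):} $\phi$ Lipschitz and $\mu\in C^1$ give $\mu'$ locally Lipschitz, hence absolutely continuous; by uniqueness for the Lipschitz ODE, $\mu(t_0)=M$ would force $\mu\equiv M$, contradicting $\mu(0)<M$, so $\mu(t)<M$ and hence $\mu'(t)=\lambda\phi(\mu(t))>0$. \emph{(iv):} $\mu$ is strictly increasing into $[\mu_0,M)$ and $\phi$ is strictly decreasing there, so $\mu'=\lambda\phi(\mu)$ is strictly decreasing, i.e.\ $\mu$ is strictly concave. \emph{(v):} differentiating $\mu'=\lambda\phi(\mu)$ where valid (a.e., since the set of $t$ with $F$ discontinuous at $\mu(t)$ is countable) gives $\mu''=-\lambda(1-F(\mu))\mu'$, so $h=-\mu''/\mu'=\lambda(1-F(\mu))$ a.e., and this is nonincreasing because $\mu$ increases and $F$ is nondecreasing. \emph{(vi):} $h(0^+)=\lambda(1-F(\mu_0^+))=\lambda(1-F(\mu_0))\le\lambda$ by right-continuity of $F$.

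\emph{Part (2).} Given $\mu$ satisfying (ii)--(v) and $\lambda$ satisfying \eqref{h_cond}, set $m_0=\mu(0)$ and $M=\lim_{t\to\infty}\mu(t)\in(m_0,\infty]$; by (iii), $\mu\colon[0,\infty)\to[m_0,M)$ is a strictly increasing $C^1$ bijection, with inverse $\tau=\mu^{-1}$. The relation in (v) shows the realizing $F$ is forced, so I would \emph{define} $F(x)=1-\bar h(\tau(x))/\lambda$ for $x\in[m_0,M)$ (where $\bar h$ is the right-continuous nonincreasing version of $h$), $F(x)=1$ for $x\ge M$, and $F(x)=0$ for $x<m_0$ (so there is an atom of mass $1-h(0^+)/\lambda\ge0$ at $m_0$). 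One checks that $F$ is a genuine CDF: nondecreasing (a decreasing function of $\tau$), right-continuous, and valued in $[0,1)$ on $[m_0,M)$ since $\bar h>0$ there (a nonincreasing, a.e.-positive function has strictly positive right limits) and $\bar h(\tau(x))\le\bar h(0)=h(0^+)\le\lambda$; moreover $\sup\{x:F(x)<1\}=M$, and the computation below gives $E[X_+]=\int_0^\infty[1-F(u)]\,du\le m_0+\mu'(0)/\lambda<\infty$, so \eqref{fm} holds and, as $0\le m_0<M$, so does \eqref{mM}. Since by Theorem~\ref{optimal} the optimal policy depends on the residual distribution only through its mean, it then suffices to take any admissible $F_0$ with $E[X_0]=m_0$.

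It remains to verify that this $\mu$ solves the optimal-policy ODE for the constructed $F$. Writing $\phi(x)=\int_x^\infty[1-F(u)]\,du$, the substitution $u=\mu(s)$ (legitimate since $\mu$ maps $[t,\infty)$ diffeomorphically onto $[\mu(t),M)$) gives
\[
\phi(\mu(t))=\int_t^\infty\bigl(1-F(\mu(s))\bigr)\,\mu'(s)\,ds=\frac1\lambda\int_t^\infty\bar h(s)\,\mu'(s)\,ds,
\]
and since $\bar h(s)\mu'(s)=-\mu''(s)$ for a.e.\ $s$, $\mu'$ is absolutely continuous (by (iii)), and $\mu'$ is nonincreasing and nonnegative (so has a limit at $+\infty$),
\[
\phi(\mu(t))=-\frac1\lambda\int_t^\infty\mu''(s)\,ds=\frac1\lambda\Bigl(\mu'(t)-\lim_{s\to\infty}\mu'(s)\Bigr).
\]
Thus, \emph{provided} $\lim_{s\to\infty}\mu'(s)=0$, we obtain $\mu'=\lambda\phi(\mu)$ and $\mu(0)=m_0$; since by Theorem~\ref{optimal} the optimal policy is the unique solution of this (Lipschitz) initial value problem, $\mu$ is the optimal policy for $(F,F_0,\lambda)$.

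\emph{Main obstacle.} The step I expect to require the most care is precisely the condition $\lim_{t\to\infty}\mu'(t)=0$, equivalently $\int_0^\infty h(t)\,dt=\infty$ (since $\mu'(t)=\mu'(0)\exp\bigl(-\int_0^t h(s)\,ds\bigr)$ by (v)). It holds automatically for every optimal policy -- Part~(1) gives $\mu'(\infty)=\lambda\phi(M)=0$ -- but it does \emph{not} follow from (ii)--(v) and \eqref{h_cond} alone; for instance $\mu(t)=t+1-e^{-t}$ satisfies (ii)--(vi) with $\lambda=1$ while $\mu'(\infty)=1$, and this $\mu$ is not realizable (the $F$ forced by (v) fails the ODE above). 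I would therefore state Part~(2) with the additional hypothesis $\lim_{t\to\infty}\mu'(t)=0$ (equivalently, divergence of $\int_0^\infty h$). The rest -- that the reverse-engineered $F$ is a bona fide distribution satisfying \eqref{fm} and \eqref{mM}, and that the atom at $m_0$ leaves $\phi$ on $[m_0,\infty)$ unchanged -- is routine.
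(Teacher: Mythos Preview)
Your Part (1) is correct and follows essentially the same path as the paper; you work directly from the ODE $\mu'=\lambda\phi(\mu)$ rather than via the explicit representation $\mu=\Psi^{-1}(\lambda\,\cdot)$, but the key identity $h=\lambda(1-F\circ\mu)$ and all the conclusions match the paper's argument.

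For Part (2) you also follow the paper's construction: define $F$ on $[\mu_0,M)$ by inverting the relation $h=\lambda(1-F\circ\mu)$, extend by $0$ and $1$ outside, verify it is a CDF, and then check the ODE by the change of variable $u=\mu(s)$. The substance is the same, and you have in fact caught a genuine gap that the paper's own proof glosses over. In the paper's computation
\[
\phi(x)=\frac{1}{\lambda}\int_{\mu^{-1}(x)}^{\infty} h(t)\mu'(t)\,dt=\frac{1}{\lambda}\int_{\mu^{-1}(x)}^{\infty} \mu''(t)\,dt=\frac{1}{\lambda}\mu'(\mu^{-1}(x)),
\]
apart from a sign slip ($h\mu'=-\mu''$), the last equality silently uses $\mu'(+\infty)=0$. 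The paper does prove that $M=+\infty$ forces $h(+\infty)=0$, which is what is needed for $F(+\infty)=1$; but $h(+\infty)=0$ is strictly weaker than $\int_0^\infty h=\infty$, which is what $\mu'(+\infty)=0$ requires. Your example $\mu(t)=t+1-e^{-t}$ (with $\lambda=1$) makes this concrete: it satisfies (ii)--(v) and \eqref{h_cond}, yet $h(t)=1/(e^t+1)$ has $\int_0^\infty h=\ln 2<\infty$, so $\mu'(\infty)=1$; for the $F$ it forces one finds $\phi(\mu(t))=e^{-t}$ while $\mu'(t)=1+e^{-t}$, so $\mu$ is \emph{not} the optimal policy for that $F$. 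When $M<\infty$ the issue disappears (a bounded increasing concave function has $\mu'(\infty)=0$), so the gap is only in the unbounded case. Your proposed remedy---adding the hypothesis $\mu'(\infty)=0$ (equivalently $\int_0^\infty h=\infty$) to Part~(2)---is exactly right, and with it both your argument and the paper's go through.
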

	
	\begin{proof}
		By its definition \eqref{defphi}, the function $\phi$ is positive on $(-\infty,M)$. In addition
		$\phi$ is absolutely continuous  $[0,M)$, with
		$$\phi'(x)=F(x)-1<0,\;\;\;{\mbox{for {\it{a.e.}} }}\;x\in [0,M),$$
		and in particular is strictly decreasing on 
		$(-\infty,M)$.
		We also have (both when $M<\infty$ and when $M=+\infty$)
		\begin{equation}\label{pgz}
			\lim_{x\rightarrow M-}\phi(x)=0.
		\end{equation}
		
		By our assumption \eqref{mM} and the positivity of $\phi$, the function
		$\Psi$ given by \eqref{def_Psi} is well-defined on $[\mu_0,M)$.
		Moreover $\Psi$ is continuously differentiable on $[\mu_0,M)$, with
		\begin{equation}\label{dpsip}\Psi'(x)=\frac{1}{\phi(x)}>0,\end{equation}
		and in particular it is strictly monotone
		increasing on $[\mu_0,M)$. Since $\phi(x)$ is absolutely continuous and positive 
		$\Psi'(x)=\frac{1}{\phi(x)}$ is also absolutely continuous on any closed subinterval of $[\mu_0,M)$.
		If $M<\infty$, then $\phi(M)=0$ and
		\begin{align*}\phi(u)&=\int_{u}^M  [1-F(x)]dx\leq M-u,\;\;\; u<M\\
			&\Rightarrow\;\;\Psi(x)=\int_{\mu_0}^x \frac{du}{\phi(u)}\geq \int_{\mu_0}^x
			\frac{du}{M-u}=\ln\left( \frac{M-\mu_0}{M-x}\right),
		\end{align*}
		which implies 
		\begin{equation}\label{psi_inf}\lim_{x\rightarrow M-}\Psi(x)=+\infty.\end{equation}
		\eqref{psi_inf} also holds if $M=+\infty$,  
		since, in this case, by \eqref{pgz} the integrand $1/\phi(u)$ in \eqref{def_Psi} 
		goes to $+\infty$ as $u\rightarrow\infty$.
		Since $\phi(x)$ is strictly decreasing, 
		$\Psi'(x)=\frac{1}{\phi(x)}$ is strictly increasing, hence $\Psi(x)$ is strictly convex.
		
		Since $\Psi$  satisfies $\Psi(\mu_0)=0$, and by \eqref{dpsip}, and \eqref{psi_inf},
		the function 
		$\Psi^{-1}$ is well defined and strictly increasing on $[0,\infty)$, with
		\begin{equation}\label{psip}\Psi^{-1}(0)=\mu_0,\;\;\; \lim_{v\rightarrow +\infty} \Psi^{-1}(v)=M,\end{equation}
		and is also continuously differentiable.
		Moreover, since 
		$$(\Psi^{-1})'(z)=\frac{1}{\Psi'(\Psi^{-1}(z))}=\phi(\Psi^{-1}(z)),\;\;\;z\in [0,\infty),$$
		we have that $(\Psi^{-1})'(z)$ is positive and is absolutely continuous on every closed sub-interval of $[0,\infty)$, as a composition of the absolutely continuous function $\phi$ and the continuously  differentiable function $\Psi^{-1}$.
		Since $\Psi$ is strictly convex 
		$\Psi^{-1}$ is strictly concave.
		
		Thus, by \eqref{ef} and \eqref{psip}, $\mu(t)$ is defined on $[0,\infty)$, with
		\begin{equation}\label{ll}\mu(0)=\mu_0,\;\;\;\lim_{t\rightarrow +\infty}\mu(t)=M,\end{equation}
		so we have (i).
		Since $\Psi^{-1}$ is continuously differentiable and $(\Psi^{-1})'$ is positive and absolutely continuous, \eqref{ef} implies 
		(ii),(iii). Since $\Psi^{-1}$ is strictly concave, 
		\eqref{ef}, implies the same for $\mu(t)$, so we have (iv). Finally, note that we have,
		for {\it{a.e.}} $t\geq 0$,
		\begin{equation}\label{prop}
			\mu'(t)=\frac{\lambda}{\Psi'(\Psi^{-1}(\lambda t))}=\lambda\phi(\mu(t))\;\;\Rightarrow\;\; \mu''(t)=\lambda \phi'(\mu(t))\mu'(t)\;\;\Rightarrow\;\; 
			h(t)=-\frac{\mu''(t)}{\mu'(t)}=\lambda[1-F(\mu(t))],\end{equation}
		and since $F$, $\mu$ are increasing functions, we conclude that $h$
		is monotone decreasing. Also, by \eqref{prop}, we have
		$h(0+)=\lambda (1-F(\mu_0+))\leq \lambda$, so that \eqref{h_cond} holds.
		
		(2) To prove the converse direction, assume 
		$\mu: [0,\infty)\rightarrow [0,\infty)$  is a function satisfying  (ii),(iii),(iv), and define
		\begin{equation}\label{dd}
			\mu_0=\mu(0),\;\;M=\lim_{t\rightarrow +\infty}\mu(t).
		\end{equation}
		We also assume $\lambda$ satisfies \eqref{h_cond}. 
		We will construct 
		distribution a $F(x)$ satisfying 
		\eqref{fm}, such that, for
		an arbitrary 
		distribution $F_0(x)$  with $\int_{-\infty}^\infty xdF_0(x)dx=\mu_0$,
		$\mu(t)$ is the optimal policy corresponding to $F$ and $F_0$.
		
		By \eqref{dd} and (iii), we have that $\mu_0<M$ and that the function 
		$\mu^{-1}:[\mu_0,M)\rightarrow [0,\infty)$ exists, is strictly monotone increasing, and satisfies
		\begin{equation}\label{mip}\mu^{-1}(\mu_0)=0,\;\;\lim_{x\rightarrow M-}\mu^{-1}(x)=+\infty.\end{equation}
		
		By (ii),(iii),(v) we have that the function $h(t)$ defined by \eqref{def_h} is an {\it{a.e.}} defined, non-negative and monotone decreasing function, hence  the 
		limit 
		$$h(+\infty)=\lim_{t\rightarrow +\infty}h(t)$$
		exists and satisfies $h(+\infty)\geq 0$. 
		We claim that
		\begin{equation}\label{claim}{\mbox{if }}M=+\infty\;\;{\mbox{then}}\;\;h(+\infty)=0.
		\end{equation}
		Indeed, since
		$$\ln\left(\frac{\mu'(t)}{\mu'(0)}\right)=-\int_0^t h(s)ds\leq -\int_0^t h(+\infty)dt=-h(+\infty)t\;\;\Rightarrow\;\; \mu'(t)\leq \mu'(0)e^{-h(+\infty)t},$$
		if $h(+\infty)>0$ we obtain
		$$\mu(t)\leq \mu(0)+\mu'(0)\int_0^t e^{-h(+\infty)s}ds\leq \mu(0)+\mu'(0)\int_0^{\infty} e^{-h(+\infty)s}ds <\infty,$$
		so $M<\infty$, and we have proved the claim \eqref{claim}.
		
		We now define $F(x)$ by
		\begin{equation}\label{def_FF}F(x)=\begin{cases}
				0 & x<\mu_0\\
				1-\frac{1}{\lambda}h(\mu^{-1}(x)) & \mu_0\leq x<M\\
				1 & x\geq M
		\end{cases}\end{equation}
		(if $M=+\infty$ then the last term for $x\geq M$ is superfluous). To show that $F(x)$ is 
		a cumulative distribution function, we have to show it is monotone increasing  
		with $F(-\infty)=0,F(+\infty)=1$.
		Since $h$ is decreasing and $\mu^{-1}$ is increasing, 
		$F$ is a monotone increasing function on
		$[\mu_0,M)$, and since 
		$\lambda$ satisfies \eqref{h_cond}, we have
		\begin{equation*}\label{gi0}F(\mu_0+)=1-\frac{1}{\lambda} h(0+)\geq 0= F(\mu_0-).
		\end{equation*}
		Using \eqref{mip} and the fact that $h(+\infty)\leq 0$, we have
		\begin{equation}\label{gi}
			\lim_{x\rightarrow M-} F(x)=1-\frac{1}{\lambda}\cdot
			\lim_{x\rightarrow M-}h(\mu^{-1}(x))=1-\frac{1}{\lambda}\cdot
			h(+\infty)\leq 1.
		\end{equation}
		Therefore $F$ is monotone increasing on 
		$(-\infty,\infty)$. 
		Moreover, if $M<\infty$ then we have
		$F(+\infty)=1$ by \eqref{def_FF}, and if $M=+\infty$ then, by 
		\eqref{claim}, we have $h(+\infty)=0$, hence \eqref{gi} 
		becomes an equality, which again gives $F(+\infty)=1$. 
		We have therefore shown that $F(x)$ 
		is a cumulative probability distribution.
		
		It remains to show that $\mu(t)$ is 
		the optimal policy corresponding to the distribution $F$ we defined. Let us compute 
		the optimal policy $\tilde{\mu}(t)$  corresponding to $F,F_0$ (recall $F_0$ is an arbitrary distribution with expectation $\mu_0=\mu(0)$), and show that $\tilde{\mu}(t)=\mu(t)$.
		
		Using
		\eqref{def_FF}, and making the change of variable $u=\mu(t),\;\;du=\mu'(t)dt$, we have, for 
		$x\in [\mu_0,M)$,
		\begin{align*}\phi(x)&=\int_x^\infty (1-F(u))du
			=\frac{1}{\lambda}\int_x^M h(\mu^{-1}(u))du=\frac{1}{\lambda}\int_{\mu^{-1}(x)}^{\infty} h(t)\mu'(t)dt\\&=\frac{1}{\lambda}\int_{\mu^{-1}(x)}^{\infty} \mu''(t)dt=\frac{1}{\lambda}\mu'(\mu^{-1}(x)),
		\end{align*}
		$$\Psi(x)=\int_{\mu_0}^x \frac{du}{\phi(u)}=\lambda\int_{\mu_0}^x \frac{du}{\mu'(\mu^{-1}(u))}=\lambda\int_{\mu_0}^x [\mu^{-1}(u)]'du=\frac{1}{\lambda}\mu^{-1}(x)-\mu^{-1}(\mu_0)=\lambda\mu^{-1}(x).$$
		and since $\tilde{\mu}(t)=\Psi^{-1}(\lambda t)$ we have
		$$\lambda t=\Psi(\tilde{\mu}(t))=\lambda \mu^{-1}(\tilde{\mu}(t)),\;\;\Rightarrow \;\; \mu(t)=\tilde{\mu}(t),$$
		as we needed to show.

	\end{proof}

	\subsection{Inequalities for the optimal policy in terms of moments of the offer distribution}
	\label{moments}
	
	Since Theorem \ref{char} characterizes 
	optimal policy functions, we cannot hope to obtain any general properties of these functions beyond those listed in this theorem, or derived from them.
	However, in this subsection we will show that by suitably restricting the distribution $F(x)$, we can derive further  properties of the corresponding functions $\mu(t)$, in particular regarding their growth.
	
	The examples in subsection \ref{mu_examples}
	show offer distributions with different 
	types of tail behavior, whose corresponding 
	optimal policy functions $\mu(t)$ grow at different rates as the marketing period $t$ increases. We now derive some  bounds  
	relating tail behavior of the distribution $F(x)$ to the growth of the 
	corresponding optimal policy function $\mu(t)$.
	
	The results of this subsection have the advantage of providing bounds on $\mu(t)$ 
	valid for all $t$, but since they are only upper bounds, they do not 
	determine the precise asymptotic behavior 
	of $\mu(t)$. In subsection \ref{asymptotics1} we will derive several results which 
	require stronger assumptions on the 
	behavior of the offer distributions, and provide information on the 
	asymptotic behavior of $\mu(t)$.

	\begin{thm}
		Assume the offer distribution satisfies $E[X_+^p]<\infty$ ($p>1$). Then for any $\mu_0\geq 0$, the corresponding optimal policy satisfies, for all $t\geq 0$,
		\begin{equation}\label{ineq1}\mu(t)\leq \mu_0+E[(X-\mu_0)_+^p]^{\frac{1}{p}}\cdot (\lambda t)^{\frac{1}{p}}.\end{equation}
	\end{thm}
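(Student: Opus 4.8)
The plan is to bound $\mu(t)$ by controlling the growth rate $\mu'(t) = \lambda\phi(\mu(t))$ from Theorem~\ref{optimal}, and to estimate $\phi$ itself in terms of the $p$-th moment. First I would invoke the probabilistic form \eqref{prob_phi}, $\phi(x) = E[(X-x)_+]$. The key estimate is a Jensen/power-mean bound: for $x \geq \mu_0 \geq 0$ we can write $(X-x)_+ \leq (X-\mu_0)_+$ pointwise, but that alone is too crude; instead I would aim to relate $\phi(\mu(t))$ to $\mu(t) - \mu_0$ using the $p$-th moment. The cleanest route: apply H\"older's inequality to $\phi(x) = E[(X-x)_+] = E[(X-x)_+ \cdot \mathbf{1}_{\{X > x\}}]$, giving
\[
\phi(x) \leq E[(X-x)_+^p]^{1/p}\,\big(P(X>x)\big)^{1-1/p} \leq E[(X-\mu_0)_+^p]^{1/p}\,\big(1 - F(x)\big)^{1/q},
\]
where $q = p/(p-1)$. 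This isolates the moment constant $A := E[(X-\mu_0)_+^p]^{1/p}$ together with a tail factor.

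Next I would convert the differential inequality $\mu'(t) \leq \lambda A (1-F(\mu(t)))^{1/q}$ into a bound on $\mu(t)$. The obstacle is the awkward factor $(1-F(\mu(t)))^{1/q}$, which I do not want to track explicitly. Two ways around this: (a) note $h(t) = \lambda(1-F(\mu(t)))$ from \eqref{prop}, so $\mu'(t) \leq A\,\lambda^{1/p}\,h(t)^{1/q}$ and one can try a direct integration of this, but $h$ is not under good control; or (b) forget the tail factor entirely by the trivial bound $1-F(x)\leq 1$, which already gives $\mu'(t)\leq \lambda A$, hence $\mu(t)\leq \mu_0 + \lambda A t$ — not good enough (wrong power of $t$). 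So the tail factor must be used. The right idea, I think, is to integrate \emph{backwards} using the substitution $u = \mu(s)$: since $\mu$ is strictly increasing (property (iii) of Theorem~\ref{char}) with $\mu(0)=\mu_0$, we have
\[
\lambda t = \int_{\mu_0}^{\mu(t)} \frac{du}{\phi(u)} = \Psi(\mu(t)),
\]
and I would lower-bound $\lambda t$ by lower-bounding $1/\phi(u)$, i.e.\ upper-bounding $\phi(u)$. Using $\phi(u) = \int_u^\infty (1-F(v))\,dv$ together with the moment hypothesis to get $1 - F(v) \leq A^p (v-\mu_0)^{-p}$ (Markov's inequality applied to $(X-\mu_0)_+^p$) yields, for $u > \mu_0$,
\[
\phi(u) \leq A^p \int_u^\infty (v-\mu_0)^{-p}\,dv = \frac{A^p}{(p-1)(u-\mu_0)^{p-1}}.
\]

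Then $\lambda t = \int_{\mu_0}^{\mu(t)} du/\phi(u) \geq \frac{p-1}{A^p}\int_{\mu_0}^{\mu(t)} (u-\mu_0)^{p-1}\,du = \frac{1}{A^p}(\mu(t)-\mu_0)^p$, which rearranges to exactly \eqref{ineq1}: $\mu(t) - \mu_0 \leq A (\lambda t)^{1/p}$. The main obstacle is really just getting the constant right — making sure the Markov bound $1-F(v)\leq E[(X-\mu_0)_+^p]/(v-\mu_0)^p$ is applied cleanly on the correct range $v > \mu_0$, and that the integral $\int_u^\infty (v-\mu_0)^{-p}dv$ converges (it does, since $p>1$) and produces the factor $1/(p-1)$ that cancels against the $\int (u-\mu_0)^{p-1}du = (\cdot)^p/p$... wait — the powers: $\int_{\mu_0}^{\mu(t)}(u-\mu_0)^{p-1}du = (\mu(t)-\mu_0)^p/p$, so one actually gets $\lambda t \geq \frac{p-1}{pA^p}(\mu(t)-\mu_0)^p$, i.e.\ a slightly worse constant $\big(\frac{p}{p-1}\big)^{1/p}A$. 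To recover the sharp constant $A$ stated in \eqref{ineq1} I would instead bound $\phi$ more carefully, e.g.\ via the H\"older estimate above combined with $\int_0^\infty (1-F(x+\mu_0))^{1/q}$-type manipulations, or use the identity $\phi(\mu_0) = E[(X-\mu_0)_+]$ and a convexity argument on $\Psi$; pinning down the exact optimal constant is the one genuinely delicate point, and I would check it against the Pareto example of subsection~\ref{policy_pareto}, where $\mu(t) = \mu_0(c\lambda t + 1)^{1/\alpha}$ should make the inequality tight in the limit $\mu_0 = x_m$, $p \to \alpha^-$.
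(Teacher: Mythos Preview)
Your core approach---Markov's inequality on $1-F(v)$ to bound $\phi(u)$ from above, then a lower bound on $\Psi(\mu(t))=\lambda t$, then inversion---is exactly the route the paper takes; the initial H\"older detour is unnecessary and you correctly abandon it. Your computation is also correct: the argument yields $\lambda t \geq \tfrac{p-1}{p}\cdot\tfrac{(\mu(t)-\mu_0)^p}{E[(X-\mu_0)_+^p]}$, hence the constant $\bigl(\tfrac{p}{p-1}\bigr)^{1/p}E[(X-\mu_0)_+^p]^{1/p}$ rather than the sharper one in~\eqref{ineq1}. The paper's proof in fact contains an arithmetic slip at precisely this point, evaluating $\int_{\mu_0}^x (v-\mu_0)^{p-1}\,dv$ as $(x-\mu_0)^p$ instead of $(x-\mu_0)^p/p$, so your hesitation over the constant is well founded and not a defect of your argument.
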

	
	In particular, this implies that 
	$\mu(t)=O(t^{\frac{1}{p}})$ as $t\rightarrow \infty$. Note that for the Pareto distribution \eqref{pareto} we have $E(X^p)<\infty$
	for all $p<\alpha$, and the exact result \eqref{mu_pareto_g} shows that $\mu(t)=O(t^{\frac{1}{\alpha}})$.
	
	\begin{proof} Assuming $x>\mu_0$, we have
		$$X>x \;\;\Leftrightarrow  \;\;(X-\mu_0)_+^p>(x-\mu_0)^p,$$	
		so, using Markov's inequality,
		$$1-F(x)=P(X>x)=P((X-\mu_0)_+^p>(x-\mu_0)^p)\leq \frac{E[(X-\mu_0)_+^p]}{(x-\mu_0)^p},$$
		hence, for $x>\mu_0$
		$$\phi(x)=\int_x^\infty (1-F(u))du\leq E[(X-\mu_0)_+^p]\int_{x}^\infty \frac{1}{(u-\mu_0)^p}du= \frac{E[(X-\mu_0)_+^p]}{(p-1)(x-\mu_0)^{p-1}},$$
		so that by \eqref{def_Psi}
		$$\Psi(x)=\int_{\mu_0}^{x}\frac{dv}{\phi(v)}\geq \frac{p-1}{E[(X-\mu_0)_+^p]}\int_{\mu_0}^x (v-\mu_0)^{p-1} dv= \frac{(x-\mu_0)^p}{E[(X-\mu_0)_+^p]}.$$
		Substituting $x=\mu(t)$ and recalling \eqref{ef},
		we have
		$$\lambda t=\Psi(\mu(t))\geq \frac{(\mu(t)-\mu_0)^p}{E[(X-\mu_0)_+^p]},$$
		which, solved for $\mu(t)$, gives \eqref{ineq1}.
	\end{proof}

	For distributions whose right tail decays at least exponentially, the following inequality shows that the growth of 
	$\mu(t)$ is at most logarithmic.
	
	\begin{thm} Assume the offer distribution satisfies $E[e^{\delta X_+}]<\infty$ ($\delta>0$). Then, for any $\mu_0\geq 0$, the corresponding optimal policy satisfies, for all $t\geq 0$,
		\begin{equation}\label{ineq2}\mu(t)\leq \mu_0+ \frac{1}{\delta}\ln\left(E[e^{\delta (X-\mu_0)_+}]\cdot \lambda t+1 \right).\end{equation}
	\end{thm}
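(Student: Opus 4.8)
The plan is to reuse, almost verbatim, the argument of the preceding theorem, with the polynomial Markov estimate replaced by an exponential (Chernoff-type) tail bound. Write $C\doteq E[e^{\delta(X-\mu_0)_+}]$. Before anything else I would check that $C$ is finite and that the standing hypotheses hold: since $\mu_0\ge 0$ we have $(X-\mu_0)_+\le X_+$ pointwise, hence $e^{\delta(X-\mu_0)_+}\le e^{\delta X_+}$, so $C\le E[e^{\delta X_+}]<\infty$; in particular $E[X_+]<\infty$, so \eqref{fm} is in force and $\phi$ is well defined, while $C\ge 1$ makes the right-hand side of \eqref{ineq2} meaningful.

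Next, for $x>\mu_0$ the equivalence $X>x\Leftrightarrow e^{\delta(X-\mu_0)_+}>e^{\delta(x-\mu_0)}$ together with Markov's inequality gives
$$1-F(x)=P(X>x)\le \frac{E[e^{\delta(X-\mu_0)_+}]}{e^{\delta(x-\mu_0)}}=C\,e^{-\delta(x-\mu_0)}.$$
Integrating this over $[x,\infty)$ and using \eqref{defphi},
$$\phi(x)=\int_x^\infty (1-F(u))\,du\le C\int_x^\infty e^{-\delta(u-\mu_0)}\,du=\frac{C}{\delta}\,e^{-\delta(x-\mu_0)},$$
valid for all $x\ge\mu_0$ (the endpoint $x=\mu_0$ by continuity of $\phi$). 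Plugging this into the definition \eqref{def_Psi} of $\Psi$ yields
$$\Psi(x)=\int_{\mu_0}^x\frac{dv}{\phi(v)}\ge\frac{\delta}{C}\int_{\mu_0}^x e^{\delta(v-\mu_0)}\,dv=\frac{1}{C}\bigl(e^{\delta(x-\mu_0)}-1\bigr),\qquad \mu_0\le x<M.$$
Finally I would substitute $x=\mu(t)$, invoke $\Psi(\mu(t))=\lambda t$ from \eqref{ef} to get $\lambda t\ge\frac{1}{C}\bigl(e^{\delta(\mu(t)-\mu_0)}-1\bigr)$, and solve this inequality for $\mu(t)$, which produces exactly \eqref{ineq2}.

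As with the previous theorem I do not expect any real obstacle: the whole proof is a one-parameter variation on the scheme already established. The only two points deserving a line of care are the finiteness of $C$ (handled above by the domination $(X-\mu_0)_+\le X_+$, which uses $\mu_0\ge 0$) and the extension of the tail bound from $x>\mu_0$ down to $x=\mu_0$ so that $1/\phi$ is controlled on all of $[\mu_0,M)$; the latter is immediate from the continuity of $\phi$. No hypothesis on whether $M$ is finite or infinite is needed, since \eqref{mM} is assumed throughout.
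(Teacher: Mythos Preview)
Your proof is correct and follows essentially the same route as the paper's own argument: a Chernoff-type Markov bound on $1-F(x)$, integration to bound $\phi$, then a lower bound on $\Psi$ and the substitution $x=\mu(t)$. Your additional remarks on the finiteness of $C$ and the endpoint $x=\mu_0$ are sound extra care not spelled out in the paper.
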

	

\begin{proof}
	For $x>\mu_0$, we have, using Markov's inequality,
	$$1-F(x)=P((X-\mu_0)_+>x-\mu_0)=P(e^{\delta (X-\mu_0)_+}>e^{\delta (x-\mu_0)})\leq \frac{E[e^{\delta (X-\mu_0)_+}]}{e^{\delta (x-\mu_0)}}$$
	%
	hence, for $x>\mu_0$
	$$\phi(x)=\int_x^\infty (1-F(u))du\leq E[e^{\delta (X-\mu_0)_+}]\int_{x}^\infty e^{-\delta (u-\mu_0)}du= E[e^{\delta (X-\mu_0)_+}]\frac{1}{\delta}e^{-\delta (x-\mu_0)}$$
	$$\Psi(x)=\int_{\mu_0}^{x}\frac{dv}{\phi(v)}\geq \frac{\delta}{E[e^{\delta (X-\mu_0)_+}]}\int_{\mu_0}^x e^{\delta (v-\mu_0)} dv= \frac{e^{\delta (x-\mu_0)}-1}{E[e^{\delta (X-\mu_0)_+}]}\;\;\Rightarrow\;\; \lambda t=\Psi(\mu(t))\geq \frac{e^{\delta (\mu(t)-\mu_0)}-1}{E[e^{\delta (X-\mu_0)_+}]},$$
	which, solved for $\mu(t)$, gives \eqref{ineq2}.
\end{proof}

\section{Distribution of the realized sale price}
\label{price_distribution}

Recall that $S_t$ is the random variable 
describing the realized sale price, assuming the marketing period is $t$ and the
optimal policy is followed. 
Define the cumulative distribution function of $S_t$ by
$$G_t(x)=P(S_t\leq x).$$
Our aim is to obtain explicit expressions for $G_t(x)$ and for the corresponding probability density. In all the developments below, it is assumed that the
function $\mu(t)=E[S_t]$ is known, since we have already shown how to compute it in section \ref{optimal_policy0}. In subsection \ref{var_V}, we will use the expression for $G_t(x)$ to derive a formula for the variance $Var[S_t]$, and in subsection \ref{examples_distribution}
we will apply the results to specific offer distributions.

\subsection{Computing the sale price distribution}

We use a `first-step analysis', assuming we start when the remaining time is $t$ and the first offer $X$ arrives 
after time $D$. 

Assume first that $D\leq t$. If $X\geq \mu(t-D)$, the first offer is accepted and we thus have
\begin{eqnarray*}P(S_t\leq x\;|\;X\geq \mu(t-D))&=&P(X\leq x\;|\;D\leq t, X>\mu(t-D))=
	\begin{cases}
		0 & x\leq \mu(t-D)\\
		\frac{F(x)-F(\mu(t-D))}{1-F(\mu(t-D))} & x> \mu(t-D)
	\end{cases}.
\end{eqnarray*}
If $X<\mu(t-D)$, the first offer is rejected, and we wait for the next offer, so that 
$$P(S_t\leq x\;|\;X\leq \mu(t-D))=P(S_{t-D}\leq x)=G_{t-D}(x).$$
Thus
\begin{align*}P(S_t\leq x\;|\;D\leq t)&=P(X\geq \mu(t-D))\cdot P(S_t\leq x\;|\;X\geq \mu(t-D))+P(X< \mu(t-D))\cdot G_{t-D}(x)\\
	&=(1-F(\mu(t-D)))P(S_t\leq x\;|\;X\geq \mu(t-D))+F(\mu(t-D))G_{t-D}(x)\\
	&=\begin{cases}
		F(\mu(t-D))G_{t-D}(x) & x<\mu(t-D)\\
		F(x)-F(\mu(t-D))+F(\mu(t))G_{t-D}(x)& x\geq \mu(t-D)
	\end{cases}\\
	&= F(\mu(t-D))G_{t-D}(x)+ [F(x)-F(\mu(t-D))]_+.
\end{align*}
If $D>t$, then $S_t=X_0$, so
$$P(S_t\leq x\;|\;D>t)=P(X_0\leq x)=F_0(x).$$
Therefore
\begin{align}\label{Gd1}G_t(x)&=P(S_t\leq x)=\lambda \int_0^t e^{-\lambda s}\Big( F(\mu(t-s))G_{t-s}(x)+ [F(x)-F(\mu(t-s))]_+ \Big)ds+e^{-\lambda t}F_0(x)\nonumber\\
	&=\lambda e^{-\lambda t}\int_0^t e^{\lambda s}\Big( F(\mu(s))G_s(x)+ [F(x)-F(\mu(s))]_+ \Big)ds+e^{-\lambda t}F_0(x).\end{align}
Note that substituting $t=0$ into \eqref{Gd1} gives
$$G_0(x)=F_0(x).$$
Multiplying both sides of \eqref{Gd1} by
$e^{\lambda t}$ and then 
differentiating with respect to $t$, we obtain
$$e^{\lambda t}G_t(t,x)+\lambda e^{\lambda t}G_t(x)=\lambda e^{\lambda t}\Big( F(\mu(t))G_t(x)+ [F(x)-F(\mu(t))]_+ \Big),$$
which, after simplification, gives
\begin{lemma} For each fixed $x$, $G_t(x)$, as a function of $t$, satisfies the  initial value problem
	\begin{equation}\label{deg} \begin{cases}\frac{d}{dt}G_t(x)=\lambda  \left[(F(\mu(t))-1)G_t(x)+[F(x)-F(\mu(t))]_+ \right],\\
			G_0(x)=F_0(x) 
		\end{cases}.
	\end{equation}
\end{lemma}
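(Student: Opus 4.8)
The plan is to obtain the initial value problem \eqref{deg} as an immediate consequence of the Volterra integral equation \eqref{Gd1}, exactly as \eqref{de1} was obtained from \eqref{int}. First I would establish \eqref{Gd1} by a first-step analysis: condition on the waiting time $D$ until the first offer and on that offer's value $X$, and split into three cases. If $D>t$ the deadline is reached, $S_t=X_0$, contributing $e^{-\lambda t}F_0(x)$. If $D\le t$ and $X\ge\mu(t-D)$ the offer is accepted, so $S_t=X$ with the law of $X$ truncated to $[\mu(t-D),\infty)$; if $D\le t$ and $X<\mu(t-D)$ the offer is rejected and the process restarts with remaining time $t-D$, so $S_t$ is distributed as $S_{t-D}$, i.e.\ has CDF $G_{t-D}(x)$. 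Averaging over the exponential law of $D$, using $P(X\ge\mu(t-D))\tfrac{F(x)-F(\mu(t-D))}{1-F(\mu(t-D))}=F(x)-F(\mu(t-D))$ for $x\ge\mu(t-D)$, and changing variables $s=t-D$ gives \eqref{Gd1}. Setting $t=0$ in \eqref{Gd1} yields the initial condition $G_0(x)=F_0(x)$.

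For the differential equation I would rewrite \eqref{Gd1} in the form
\[
e^{\lambda t}G_t(x)=\lambda\int_0^t e^{\lambda s}\Big(F(\mu(s))G_s(x)+[F(x)-F(\mu(s))]_+\Big)\,ds+F_0(x),
\]
and differentiate both sides with respect to $t$, applying the fundamental theorem of calculus on the right and the product rule on the left:
\[
e^{\lambda t}\frac{d}{dt}G_t(x)+\lambda e^{\lambda t}G_t(x)=\lambda e^{\lambda t}\Big(F(\mu(t))G_t(x)+[F(x)-F(\mu(t))]_+\Big).
\]
Cancelling the common factor $e^{\lambda t}$ and rearranging the two terms involving $G_t(x)$ gives precisely $\tfrac{d}{dt}G_t(x)=\lambda\big[(F(\mu(t))-1)G_t(x)+[F(x)-F(\mu(t))]_+\big]$, which together with $G_0(x)=F_0(x)$ is \eqref{deg}. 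The algebra here is routine.

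The one point that needs care is regularity in $t$, which is what makes the differentiation legitimate. For fixed $x$ the integrand $s\mapsto e^{\lambda s}\big(F(\mu(s))G_s(x)+[F(x)-F(\mu(s))]_+\big)$ is bounded (all of $F$, $F(\mu(\cdot))$ and $G_\cdot(x)$ lie in $[0,1]$) and measurable, so the right-hand side of the displayed equation is absolutely continuous in $t$; hence so is $e^{\lambda t}G_t(x)$, and therefore $G_t(x)$, and \eqref{deg} holds for a.e.\ $t$, which is all that is needed for the applications in this section. To see moreover that $t\mapsto G_t(x)$ is continuous (so that, when $F$ is continuous at the relevant points, the integrand is continuous and \eqref{deg} holds in the classical sense), I would either note that continuity of the right side of \eqref{Gd1}, given continuity of $\mu$ from Theorem \ref{char}(ii), forces continuity of $G_\cdot(x)$, or run a short Picard/Gronwall argument showing \eqref{Gd1} has a unique bounded solution in $t$ on each interval $[0,T]$. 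I expect this regularity bookkeeping — rather than the computation itself — to be the only nontrivial part of the argument.
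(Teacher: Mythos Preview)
Your proposal is correct and follows essentially the same approach as the paper: derive the Volterra equation \eqref{Gd1} by a first-step analysis, read off $G_0(x)=F_0(x)$ at $t=0$, then multiply by $e^{\lambda t}$ and differentiate to obtain \eqref{deg}. The only difference is that you add a brief discussion of the regularity in $t$ needed to justify the differentiation, which the paper omits.
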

In the following we will solve the above differential 
equation, which is a first-order linear differential equation with respect to $t$, with $x$ considered a parameter. We thus obtain an explicit expression for $G_t(x)$.
Note that the expression
for the distribution of $S_t$ takes
different functional forms in the intervals 
$(-\infty,\mu_0]$, $[\mu_0,\mu(t)]$, and 
$[\mu(t),\infty)$. 

\begin{thm}
	The cumulative density of the sale price $S_t$ is given by
	\begin{align}\label{Gdist}G_t(x)&=
		\phi(\mu(t))\left[ \frac{F_0(x)}{\phi(\mu_0)}+\int_{\mu_0}^{\mu(t)} \frac{[F(x)-F(w)]_+}{\phi(w)^2} dw\right]\\
		&=\phi(\mu(t)) \frac{F_0(x)}{\phi(\mu_0)}+\phi(\mu(t))\cdot 
		\begin{cases}
			0 &  x<\mu_0\\
			\frac{1}{\phi(x)}-\frac{1}{\phi(\mu_0)}+\phi'(x)\int_{\mu_0}^{x}\frac{dw}{\phi(w)^2}    & \mu_0\leq x<\mu(t)\\
			\frac{1}{\phi(\mu(t))}-\frac{1}{\phi(\mu_0)}+\phi'(x)\int_{\mu_0}^{\mu(t)}\frac{dw}{\phi(w)^2}	& x\geq \mu(t)
		\end{cases}.\nonumber
	\end{align}
	In case $F$,$F_0$ are absolutely continuous with densities $f,f_0$, the probability density function $g_t(x)$  of $S_t$ is given by
	\begin{equation}\label{density}g_t(x)
		=\phi(\mu(t)) \frac{f_0(x)}{\phi(\mu_0)}+\phi(\mu(t)) f(x)\cdot \begin{cases}
			0 & x<\mu_0\\
			\int_{\mu_0}^{x}\frac{dw}{\phi(w)^2}  & \mu_0\leq x<\mu(t)\\
			\int_{\mu_0}^{\mu(t)}\frac{dw}{\phi(w)^2}	& x>\mu(t)
		\end{cases}
		.\end{equation}
\end{thm}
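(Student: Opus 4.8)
By the Lemma, for each fixed $x$ it suffices to solve the linear initial value problem \eqref{deg} in the variable $t$, and then to recast the solution in the form \eqref{Gdist}. The plan is to solve \eqref{deg} by an integrating factor, and the key point is that $1/\phi(\mu(t))$ is an integrating factor for it. Indeed, Theorem \ref{optimal} gives $\mu'(t)=\lambda\phi(\mu(t))$, and from \eqref{defphi} (see also \eqref{calc0}) we have $\phi'(y)=F(y)-1$ at every continuity point of $F$; since $\mu$ is $C^1$ and increasing, $t\mapsto\phi(\mu(t))$ is absolutely continuous on compact $t$-intervals with a.e.\ derivative $\phi'(\mu(t))\mu'(t)=\lambda(F(\mu(t))-1)\phi(\mu(t))$. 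Differentiating the quotient and substituting the ODE \eqref{deg} for $\frac{d}{dt}G_t(x)$ then gives
\[
\frac{d}{dt}\!\left(\frac{G_t(x)}{\phi(\mu(t))}\right)=\frac{\frac{d}{dt}G_t(x)-\lambda(F(\mu(t))-1)G_t(x)}{\phi(\mu(t))}=\frac{\lambda\,[F(x)-F(\mu(t))]_+}{\phi(\mu(t))}\qquad\text{for a.e.\ }t\ge 0 .
\]

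\textbf{Integrating and changing variables.} Integrating this identity from $0$ to $t$ and using $\mu(0)=\mu_0$ together with the initial condition $G_0(x)=F_0(x)$ in \eqref{deg}, I obtain
\[
\frac{G_t(x)}{\phi(\mu(t))}=\frac{F_0(x)}{\phi(\mu_0)}+\int_0^t\frac{\lambda\,[F(x)-F(\mu(s))]_+}{\phi(\mu(s))}\,ds .
\]
In the integral I substitute $w=\mu(s)$; since $dw=\mu'(s)\,ds=\lambda\phi(w)\,ds$ (Theorem \ref{optimal} again), we have $\lambda\,ds=dw/\phi(w)$, and the integral becomes $\int_{\mu_0}^{\mu(t)}[F(x)-F(w)]_+\,\phi(w)^{-2}\,dw$. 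Multiplying through by $\phi(\mu(t))$ produces exactly the first line of \eqref{Gdist}.

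\textbf{The three regimes.} It remains to evaluate $\int_{\mu_0}^{\mu(t)}[F(x)-F(w)]_+\,\phi(w)^{-2}\,dw$ according to the position of $x$. Since $F$ is nondecreasing and $\phi>0$ on $(-\infty,M)$, the integrand vanishes for $w>x$ and equals $(F(x)-F(w))\phi(w)^{-2}$ for $w\le x$. Hence for $x<\mu_0$ the integral is $0$ and only the $F_0$-term in \eqref{Gdist} survives; for $\mu_0\le x<\mu(t)$ the integral runs effectively over $(\mu_0,x)$; and for $x\ge\mu(t)$ it runs over all of $(\mu_0,\mu(t))$. In the latter two cases I would write $F(x)-F(w)=(F(x)-1)+(1-F(w))$ and use $(1-F(w))\phi(w)^{-2}=\frac{d}{dw}\bigl(1/\phi(w)\bigr)$, so that the fundamental theorem of calculus contributes the terms $\frac{1}{\phi(x)}-\frac{1}{\phi(\mu_0)}$ (resp.\ $\frac{1}{\phi(\mu(t))}-\frac{1}{\phi(\mu_0)}$), while the constant factor $F(x)-1=\phi'(x)$ comes out in front of $\int_{\mu_0}^{\min(x,\mu(t))}\phi(w)^{-2}\,dw$. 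This is the piecewise formula in \eqref{Gdist}.

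\textbf{The density, and the main difficulty.} When $F,F_0$ are absolutely continuous with densities $f,f_0$, the density $g_t(x)$ is obtained by differentiating the first line of \eqref{Gdist} in $x$ (the factor $\phi(\mu(t))$ being constant in $x$): writing $[F(x)-F(w)]_+=\bigl(\int_w^x f\bigr)_+$ and applying Fubini, one finds $\partial_x$ of the integral to be $f(x)\int_{\mu_0}^{\min(x,\mu(t))}\phi(w)^{-2}\,dw$ (the boundary contribution at $w=x$ vanishing), which, together with the $f_0$-term, is \eqref{density}. The argument is elementary; the two things that need care are the identification of $1/\phi(\mu(t))$ as the integrating factor — which hinges entirely on the ODE \eqref{ode} for $\mu$ from Theorem \ref{optimal} — and the bookkeeping across the three $x$-regimes. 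The remaining regularity points (that $\phi'=F-1$ only a.e., that $t\mapsto\phi(\mu(t))$ is absolutely continuous on compacts so the fundamental theorem of calculus applies, and the Fubini step for the density) are routine.
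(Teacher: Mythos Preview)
Your proposal is correct and follows essentially the same route as the paper's proof: solve the linear ODE \eqref{deg} via an integrating factor, change variables $w=\mu(s)$ using $\mu'=\lambda\phi(\mu)$, and then evaluate the integral in the three $x$-regimes using $(1-F(w))/\phi(w)^2=\frac{d}{dw}(1/\phi(w))$. Your direct recognition that $1/\phi(\mu(t))$ is the integrating factor, and your Fubini step for the density, are mild streamlinings of the paper's presentation (which first writes $\Phi(t)=e^{-\lambda\int_0^t(1-F(\mu(s)))ds}$ and then identifies it as $\phi(\mu(t))/\phi(\mu_0)$, and differentiates each of the three pieces separately to get \eqref{density}), but the substance is identical.
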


\begin{proof}
	For each fixed $x$, \eqref{deg} in a linear differential equation of first order, whose general solution is therefore
	$$G_t(x)=C(x)e^{-\lambda \int_0^t[1-F(\mu(s))]ds}+\lambda \int_0^t e^{-\lambda \int_r^t[1-F(\mu(s))]ds} [F(x)-F(\mu(r))]_+dr,$$
	where $C(x)$ is the integration constant.
	The initial condition in \eqref{deg} implies 
	$C(x)=F_0(x)$. Defining 
	$$\Phi(t)=e^{-\lambda \int_{0}^t[1-F(\mu(s))]ds},$$
	we can write 
	\begin{equation}\label{G1}G_t(x)=F_0(x)\Phi(t)+\lambda \int_0^t \frac{\Phi(t)}{\Phi(r)} [F(x)-F(\mu(r))]_+dr.\end{equation}
	Using the change of variable
	$$w=\mu(s),\;\;dw=\mu'(s)ds=\lambda \phi(\mu(s))ds=\lambda \phi(w)ds$$
	we have
	\begin{align}\label{ii}
		\int_{0}^t[1-F(\mu(s))]ds=\frac{1}{\lambda}\int_{\mu_0}^{\mu(t)}[1-F(w)]\frac{dw}{\phi(w)}=-\frac{1}{\lambda}\int_{\mu_0}^{\mu(t)}\frac{\phi'(w)}{\phi(w)}dw=\frac{1}{\lambda}\ln\left( \frac{\phi(\mu_0)}{\phi(\mu(t))}\right),
	\end{align}
	hence 
	$$\Phi(t)=\frac{\phi(\mu(t))}{\phi(\mu_0)}.$$
	Therefore we can write \eqref{G1} in the form
	\begin{equation}\label{G20}G_t(x)=F_0(x)\frac{\phi(\mu(t))}{\phi(\mu_0)}+\lambda \int_0^t \frac{\phi(\mu(t))}{\phi(\mu(r))} [F(x)-F(\mu(r))]_+dr.\end{equation}
	Making a change of variable $w=\mu(r)$ in
	the integral above, we obtain
	$$\int_0^t \frac{[F(x)-F(\mu(r))]_+}{\phi(\mu(r))} dr=\frac{1}{\lambda}\int_{\mu_0}^{\mu(t)} \frac{[F(x)-F(w)]_+}{\phi(w)^2} dw,$$
	so that
	\begin{equation}\label{G2}G_t(x)=\phi(\mu(t))\left[ \frac{F_0(x)}{\phi(\mu_0)}+\int_{\mu_0}^{\mu(t)} \frac{[F(x)-F(w)]_+}{\phi(w)^2} dw\right],\end{equation}
	and we have the first expression in \eqref{Gdist}.
	To obtain the second expression, we 
	note first that by the monotonicity of $F$
	we have
	\begin{equation}\label{mono}
		[F(x)-F(w)]_+=\begin{cases}
			0 & x\leq w\\
			F(x)-F(w) & x>w
		\end{cases}.
	\end{equation}
	In the following we divide the calculation into three cases, $x<\mu_0$,$\mu_0\leq x<\mu(t)$, and $x\geq \mu(t)$.
	In the case $x<\mu_0$, \eqref{mono} implies that the integral in \eqref{G20} vanishes, so that 
	\begin{equation}\label{part1}G_t(x)=\phi(\mu(t))\frac{F_0(x)}{\phi(\mu_0)},\;\;\;x<\mu_0,
	\end{equation}
	and differentiating with respect to $x$ (assuming $F_0$ is absolutely continuous) we have
	\begin{equation}\label{part1d}g_t(x)=\frac{d}{dx}G_t(x)=\phi(\mu(t))\frac{f_0(x)}{\phi(\mu_0)},\;\;\;x<\mu_0.
	\end{equation}
	When
	$\mu_0\leq x<\mu(t)$, we have, using \eqref{mono}, 
	\begin{align}\label{kc}&\int_{\mu_0}^{\mu(t)} \frac{[F(x)-F(w)]_+}{\phi(w)^2}  dw=\int_{\mu_0}^{x} \frac{F(x)-F(w)}{\phi(w)^2} dw\\
		&=F(x)\int_{\mu_0}^{x}\frac{dw}{\phi(w)^2} -\int_{\mu_0}^{x} \frac{F(w)}{\phi(w)^2} dw=F(x)\int_{\mu_0}^{x}\frac{dw}{\phi(w)^2} -\int_{\mu_0}^{x} \frac{\phi'(w)+1}{\phi(w)^2} dw\nonumber\\
		&=(F(x)-1)\int_{\mu_0}^{x}\frac{dw}{\phi(w)^2} +\frac{1}{\phi(x)}-\frac{1}{\phi(\mu_0)}=\phi'(x)\int_{\mu_0}^{x}\frac{dw}{\phi(w)^2} +\frac{1}{\phi(x)}-\frac{1}{\phi(\mu_0)},\nonumber\end{align}
	so that \eqref{G2} reduces to
	\begin{equation}\label{part2}G_t(x)=\phi(\mu(t))\left[\frac{1}{\phi(x)}+\phi'(x)\int_{\mu_0}^{x}\frac{dw}{\phi(w)^2}-\frac{1-F_0(x)}{\phi(\mu_0)} \right],\;\;\;\mu_0\leq x<\mu(t)
	\end{equation}
	and differentiating with respect to $x$ (assuming $F,F_0$ are absolutely continuous) gives
	\begin{align}\label{part2d}g_t(x)=\frac{d}{dx}G_t(x)&=\phi(\mu(t))\left[-\frac{\phi'(x)}{\phi(x)^2}+\phi''(x)\int_{\mu_0}^{x}\frac{dw}{\phi(w)^2}
		+\phi'(x)\frac{1}{\phi(x)^2}
		+\frac{f_0(x)}{\phi(\mu_0)} \right]\\
		&=\phi(\mu(t))\left[f(x)\int_{\mu_0}^{x}\frac{dw}{\phi(w)^2}
		+\frac{f_0(x)}{\phi(\mu_0)} \right],\;\;\;\mu_0\leq x<\mu(t)\nonumber
	\end{align}
	where in the second line we have used the fact that $\phi''(x)=f(x)$.
	
	Finally, when $x\geq \mu(t)$, we have
	$$\int_{\mu_0}^{\mu(t)} \frac{[F(x)-F(w)]_+}{\phi(w)^2}  dw=\int_{\mu_0}^{\mu(t)} \frac{F(x)-F(w)}{\phi(w)^2} dw,$$
	and a calculation identical to t\eqref{kc} above leads to 
	$$\int_{\mu_0}^{\mu(t)} \frac{F(x)-F(w)}{\phi(w)^2} dw=\phi'(x)\int_{\mu_0}^{\mu(t)}\frac{dw}{\phi(w)^2} +\frac{1}{\phi(\mu(t))}-\frac{1}{\phi(\mu_0)},$$
	so that \eqref{G2} reduces to
	\begin{equation}\label{part3}G_t(x)=\phi(\mu(t))\left[\phi'(x)\int_{\mu_0}^{\mu(t)}\frac{dw}{\phi(w)^2}+\frac{1}{\phi(\mu(t))}-\frac{1-F_0(x)}{\phi(\mu_0)}\right],\;\;\; x\geq \mu(t)\end{equation}
	and differentiating with respect to $x$
	(assuming $F,F_0$ are absolutely continuous) gives 
	\begin{equation}\label{part3d}g_t(x)=\frac{d}{dx}G_t(x)=\phi(\mu(t))\left[f(x)\int_{\mu_0}^{\mu(t)}\frac{dw}{\phi(w)^2}+\frac{f_0(x)}{\phi(\mu_0)}\right],\;\;\;x\geq\mu(t).\end{equation}
	Combining \eqref{part1},\eqref{part2} and \eqref{part3} gives \eqref{Gdist} and combining 
	\eqref{part1d},\eqref{part2d} and \eqref{part3d}
	gives \eqref{density}. 
\end{proof}

\subsection{Variance of the sale price}
\label{var_V}

Since we now have 
an explicit expression for the distribution of the sale price $S_t$, we can derive its variance, which allows us to evaluate the uncertainty in the realized sale price when the optimal policy is applied.

\begin{thm}
	Assume $E(X_0^2)<\infty$ and $E(X_+^2)<\infty$.
	Then the variance of the sale price, under the optimal policy, is finite and given by
	\begin{equation}\label{varf}
		Var[S_t]=\phi(\mu(t)) \left[\frac{Var[X_0]}{\phi(\mu_0)} +2
		\int_{\mu_0}^{\mu(t)}\frac{1}{\phi(w)^2} \int_{w}^\infty \phi(x) dx dw\right], \end{equation}
	which can also be written as
	\begin{equation}\label{varfv}
		Var[S_t]=E[(X-\mu(t))_+] \left[\mu(t)-\mu_0 +\frac{Var[X_0]}{E((X-\mu_0)_+)}+
		\int_{\mu_0}^{\mu(t)}\frac{Var[(X-w)_+]}{E[(X-w)_+]^2}  dw\right].
	\end{equation}
\end{thm}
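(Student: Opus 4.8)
The plan is to reduce everything to the second moment, via $Var[S_t]=E[S_t^2]-(E[S_t])^2=E[S_t^2]-\mu(t)^2$, the last equality being \eqref{Es}. Finiteness is easy to dispose of first. Every accepted offer satisfies $X_i\ge\mu(t_i)\ge\mu_0\ge 0$, so the realized price $S_t$ is either $X_0$ or one of the (nonnegative) offers arriving during $[0,t]$; hence $S_t^2\le X_0^2+\sum_{i:\,t_i>0}(X_i)_+^2$, and since the number of offers in $[0,t]$ is Poisson with mean $\lambda t$ and independent of the offer values, $E[S_t^2]\le E[X_0^2]+\lambda t\,E[X_+^2]<\infty$ under the stated hypotheses. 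Moreover $\phi$ is continuous and strictly positive on $[\mu_0,M)$, which by Theorem \ref{optimal} contains the compact interval $[\mu_0,\mu(t)]$, so $1/\phi(w)^2$ is bounded there and every integral below converges absolutely; this also justifies the interchanges of order of integration that follow.

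To compute $m_2(t):=E[S_t^2]$ I would run the same first-step analysis used for $\mu$ and for $G_t$: conditioning on the waiting time $D$ to the first offer and on its value $X$, and noting that $S_t^2$ equals $X^2$ if $X\ge\mu(t-D)$, equals (in law) $S_{t-D}^2$ if $X<\mu(t-D)$, and equals $X_0^2$ if $D>t$, gives
\[
m_2(t)=e^{-\lambda t}E[X_0^2]+\lambda\int_0^t e^{-\lambda(t-s)}\Big(\textstyle\int_{\mu(s)}^\infty x^2\,dF(x)+F(\mu(s))\,m_2(s)\Big)\,ds .
\]
Multiplying by $e^{\lambda t}$ and differentiating turns this into the linear ODE $m_2'(t)=\lambda\int_{\mu(t)}^\infty x^2\,dF(x)-\lambda[1-F(\mu(t))]\,m_2(t)$ with $m_2(0)=E[X_0^2]$, whose integrating factor is exactly $\phi(\mu_0)/\phi(\mu(t))$ — the quantity already computed in \eqref{ii}. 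Solving and substituting $w=\mu(s)$, $dw=\lambda\phi(\mu(s))\,ds$ yields
\[
E[S_t^2]=\phi(\mu(t))\left[\frac{E[X_0^2]}{\phi(\mu_0)}+\int_{\mu_0}^{\mu(t)}\frac{1}{\phi(w)^2}\left(\int_w^\infty x^2\,dF(x)\right)dw\right].
\]
(Equivalently, in the absolutely continuous case one gets the same formula by integrating $x^2$ against the density \eqref{density}, via a Fubini interchange in which the inner $w$-limit is $\min(x,\mu(t))$; in general one integrates $x^2$ against \eqref{Gdist} in the Stieltjes sense.)

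Subtracting $\mu(t)^2$ is the crux. The elementary identity, valid a.e. since $\phi'=F-1$,
\[
\frac{d}{dw}\!\left(\frac{w^2}{\phi(w)}\right)=\frac{2w}{\phi(w)}+\frac{w^2[1-F(w)]}{\phi(w)^2},
\]
integrated from $\mu_0$ to $\mu(t)$ and multiplied by $\phi(\mu(t))$ (using $\mu(0)=\mu_0$), gives
\[
\mu(t)^2=\phi(\mu(t))\left[\frac{\mu_0^2}{\phi(\mu_0)}+\int_{\mu_0}^{\mu(t)}\left(\frac{2w}{\phi(w)}+\frac{w^2[1-F(w)]}{\phi(w)^2}\right)dw\right].
\]
On the other hand, expanding $(X-w)^2=X^2-2wX+w^2$ on $\{X>w\}$ and using $\int_w^\infty x\,dF(x)=\sigma(w)=\phi(w)+w[1-F(w)]$ (which is \eqref{calc0} together with \eqref{def_sigma}) yields the pointwise identity $\int_w^\infty x^2\,dF(x)=E[(X-w)_+^2]+2w\phi(w)+w^2[1-F(w)]$. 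Inserting this into the formula for $E[S_t^2]$ and subtracting the one for $\mu(t)^2$, the terms $2w/\phi(w)$ and $w^2[1-F(w)]/\phi(w)^2$ cancel identically, leaving
\[
Var[S_t]=\phi(\mu(t))\left[\frac{E[X_0^2]-\mu_0^2}{\phi(\mu_0)}+\int_{\mu_0}^{\mu(t)}\frac{E[(X-w)_+^2]}{\phi(w)^2}\,dw\right].
\]

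Finally one recognizes the two displayed forms. For \eqref{varf}: $E[X_0^2]-\mu_0^2=Var[X_0]$, and a Fubini interchange on $\phi(x)=\int_x^\infty[1-F(u)]\,du$ gives $\int_w^\infty\phi(x)\,dx=\int_w^\infty(u-w)[1-F(u)]\,du=\tfrac12\int_w^\infty(u-w)^2\,dF(u)=\tfrac12 E[(X-w)_+^2]$ (the last step integration by parts, boundary term vanishing since $E[X_+^2]<\infty$), so $E[(X-w)_+^2]=2\int_w^\infty\phi(x)\,dx$. For \eqref{varfv}: write $E[(X-w)_+^2]=Var[(X-w)_+]+\phi(w)^2$, which peels off $\int_{\mu_0}^{\mu(t)}\phi(w)^2/\phi(w)^2\,dw=\mu(t)-\mu_0$, together with the probabilistic readings $\phi(\mu(t))=E[(X-\mu(t))_+]$, $\phi(\mu_0)=E[(X-\mu_0)_+]$, $\phi(w)=E[(X-w)_+]$ from \eqref{prob_phi}. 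The only real obstacle is bookkeeping: carrying the three-piece structure of the density (or of $G_t$) through the Fubini step, and spotting the antiderivative $w^2/\phi(w)$ so that the subtraction of $\mu(t)^2$ collapses cleanly — both the antiderivative identity and the expansion of $(X-w)_+^2$ are easy to botch by a stray factor of $2$ or a sign.
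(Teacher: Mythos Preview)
Your proof is correct and takes a genuinely different route from the paper's. The paper computes $E[S_t^2]$ by integrating the tail identity $E[Z^2]=2\int_0^\infty z[1-F_Z(z)]\,dz+2\int_{-\infty}^0 zF_Z(z)\,dz$ directly against the already-derived CDF $G_t(x)$, which forces it to carry the three-piece structure of $G_t$ through a fairly long calculation (equations \eqref{comp1}--\eqref{comp3}) involving integration by parts and an interchange of the order of integration before the $\mu(t)^2$ term emerges. You instead bypass $G_t$ entirely: you derive an ODE for $m_2(t)=E[S_t^2]$ by the same first-step analysis used for $\mu$, solve it with the same integrating factor, and then subtract $\mu(t)^2$ via the antiderivative identity for $w^2/\phi(w)$ combined with the expansion $\int_w^\infty x^2\,dF(x)=E[(X-w)_+^2]+2w\phi(w)+w^2[1-F(w)]$, which produces exact cancellation. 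Your approach is cleaner and shows that the variance formula does not actually require knowing the full distribution of $S_t$; the paper's approach has the compensating advantage of being a straightforward corollary of the distributional result already in hand. Both routes meet at the identity $E[(X-w)_+^2]=2\int_w^\infty\phi(x)\,dx$ (which the paper records as \eqref{pw}), and the derivation of \eqref{varfv} from \eqref{varf} is identical.
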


\begin{proof}
	We will be using the following identity:
	if $Z$ is a random variable with CDF $F_Z$, then 
	\begin{equation}\label{identity}E[Z^2]=2\int_0^\infty z [1-F_Z(z)]dz+2\int_{-\infty}^0 zF_Z(z) dz.\end{equation}
	Using \eqref{defphi} and \eqref{identity}, with $Z=(X-w)_+$ ($w\geq 0$) we have $F_Z(z)=F(z+w)$ for $z\geq 0$, hence
	\begin{align}\label{pw}&\int_w^\infty \phi(x)dx=\int_w^\infty \int_x^\infty (1-F(u))du dx =\int_w^\infty \int_w^u (1-F(u))dx du
		=\int_w^\infty (u-w)(1-F(u)) du \\
		&=\int_0^\infty u(1-F(z+w)) dz=\int_0^\infty z(1-F_Z(z))dz=\frac{1}{2}E(Z^2) =\frac{1}{2}E[(X-w)_+^2], \nonumber
	\end{align}
	and since $E[(X-w)_+^2)]\leq E[X_+^2]<\infty$, by our assumption, this shows in particular that the integral on the left-hand side of \eqref{pw} is finite.
	
	Using \eqref{identity} with $Z=S_t$ and 
	\eqref{Gdist}, we have
	\begin{align}\label{comp1}&E[S_t^2]
		=2\int_0^\infty x[1-G_t(x)]dx+2\int_{-\infty}^0 xG_t(x)dx\\ 
		&=2\int_{0}^{\mu_0}\left[1-\phi(\mu(t))\frac{F_0(x)}{\phi(\mu_0)} \right]xdx+2\int_{\mu_0}^{\mu(t)}\left[1-\phi(\mu(t))\left(\frac{F_0(x)}{\phi(\mu_0)}+ \frac{1}{\phi(x)}-\frac{1}{\phi(\mu_0)}+\phi'(x)\int_{\mu_0}^{x}\frac{dw}{\phi(w)^2}\right) \right]xdx\nonumber\\
		&+2\int_{\mu(t)}^\infty \left[1-\phi(\mu(t))\left(\frac{F_0(x)}{\phi(\mu_0)}+	\frac{1}{\phi(\mu(t))}-\frac{1}{\phi(\mu_0)}+\phi'(x)\int_{\mu_0}^{\mu(t)}\frac{dw}{\phi(w)^2}\right) \right]xdx+2\int_{-\infty}^{0}\phi(\mu(t))\frac{F_0(x)}{\phi(\mu_0)}xdx\nonumber\\
		&=2\int_{0}^{\mu_0}xdx-2\phi(\mu(t))\left[\int_{\mu_0}^{\mu(t)}\left[\frac{1}{\phi(x)}+ \phi'(x)\int_{\mu_0}^{x}\frac{dw}{\phi(w)^2}\right]xdx+\int_{\mu(t)}^\infty 	\phi'(x)\int_{\mu_0}^{\mu(t)}\frac{dw}{\phi(w)^2} xdx\right]\nonumber\\
		&+2\int_{\mu_0}^{\mu(t)}xdx+\frac{2\phi(\mu(t))}{\phi(\mu_0)}\left[\int_{0}^\infty x [1-F_0(x)]dx+\int_{-\infty}^0 xF_0(x)dx-\int_0^{\mu_0}xdx\right]\nonumber\\
		&=-2\phi(\mu(t))\left[\int_{\mu_0}^{\mu(t)}\left[\frac{1}{\phi(x)}+ \phi'(x)\int_{\mu_0}^{x}\frac{dw}{\phi(w)^2}\right]xdx+\int_{\mu(t)}^\infty 	\phi'(x)\int_{\mu_0}^{\mu(t)}\frac{dw}{\phi(w)^2} xdx\right]\nonumber\\
		&+\mu(t)^2+\frac{\phi(\mu(t))}{\phi(\mu_0)}\left(E[X_0^2]-E[X_0]^2\right).\nonumber
	\end{align}
	We have
	\begin{align}\label{cc1}
		&\int_{\mu_0}^{\mu(t)}\left[\frac{1}{\phi(x)}+ \phi'(x)\int_{\mu_0}^{x}\frac{dw}{\phi(w)^2}\right]xdx=\int_{\mu_0}^{\mu(t)} x\frac{d}{dx}\left[\phi(x)\int_{\mu_0}^{x}\frac{dw}{\phi(w)^2}\right]dx\\
		&= x\left[\phi(x)\int_{\mu_0}^{x}\frac{dw}{\phi(w)^2}\right]\Big|_{x=\mu_0}^{x=\mu(t)}-\int_{\mu_0}^{\mu(t)}\left[\phi(x)\int_{\mu_0}^{x}\frac{dw}{\phi(w)^2}\right]dx\nonumber\\
		&=\mu(t)\phi(\mu(t))\int_{\mu_0}^{\mu(t)}\frac{dw}{\phi(w)^2} -\int_{\mu_0}^{\mu(t)}\left[\phi(x)\int_{\mu_0}^{x}\frac{dw}{\phi(w)^2}\right]dx,\nonumber
	\end{align}
	and
	\begin{align}\label{cc2}
		&\int_{\mu(t)}^\infty 	\phi'(x)\int_{\mu_0}^{\mu(t)}\frac{dw}{\phi(w)^2} xdx=\left[\phi(x)x\Big|_{x=\mu(t)}^{x=\infty}-\int_{\mu(t)}^\infty \phi(x)dx\right]
		\int_{\mu_0}^{\mu(t)}\frac{dw}{\phi(w)^2}\nonumber\\&=-\left[\phi(\mu(t))\mu(t)+\int_{\mu(t)}^\infty \phi(x)dx\right]
		\int_{\mu_0}^{\mu(t)}\frac{dw}{\phi(w)^2}.
	\end{align}
	Combining \eqref{cc1} and \eqref{cc2} gives
	\begin{align}\label{comp3}
		&\int_{\mu_0}^{\mu(t)}\left[\frac{1}{\phi(x)}+ \phi'(x)\int_{\mu_0}^{x}\frac{dw}{\phi(w)^2}\right]xdx+\int_{\mu(t)}^\infty 	\phi'(x)\int_{\mu_0}^{\mu(t)}\frac{dw}{\phi(w)^2} xdx \\
		&=-\left[   \int_{\mu_0}^{\mu(t)}\left[\phi(x)\int_{\mu_0}^{x}\frac{dw}{\phi(w)^2}\right]dx+
		\int_{\mu(t)}^\infty\phi(x)\int_{\mu_0}^{\mu(t)}\frac{1}{\phi(w)^2} dwdx \right]\nonumber\\
		&=-\left[  \int_{\mu_0}^{\mu(t)}\int_{w}^{\mu(t)}\phi(x)\frac{1}{\phi(w)^2}dx dw+ 
		\int_{\mu_0}^{\mu(t)}\int_{\mu(t)}^\infty\phi(x)\frac{1}{\phi(w)^2} dx dw \right] =-
		\int_{\mu_0}^{\mu(t)}\frac{1}{\phi(w)^2} \int_{w}^\infty \phi(x) dx dw. \nonumber
	\end{align}
	From \eqref{comp1},\eqref{comp3} we get
	$$E[S_t^2]=\mu(t)^2+\frac{\phi(\mu(t))}{\phi(\mu_0)}\cdot Var[X_0]+2\phi(\mu(t)) 
	\int_{\mu_0}^{\mu(t)}\frac{1}{\phi(w)^2} \int_{w}^\infty \phi(x) dx dw,$$
	which, since $\mu(t)^2=E[S_t]^2$, implies \eqref{varf}.
	
	To obtain \eqref{varfv}, note that, using \eqref{pw} and \eqref{prob_phi} we have
	\begin{align}\label{idk}
		&2
		\int_{\mu_0}^{\mu(t)}\frac{1}{\phi(w)^2} \int_{w}^\infty \phi(x) dx dw=\int_{\mu_0}^{\mu(t)}\frac{E[(X-w)_+^2]}{E[(X-w)_+]^2}  dw\\&=(\mu(t)-\mu_0)+\int_{\mu_0}^{\mu(t)}\frac{E[(X-w)_+^2]-E[(X-w)_+]^2}{E[(X-w)_+]^2}  dw
		=(\mu(t)-\mu_0)+\int_{\mu_0}^{\mu(t)}\frac{Var[(X-w)_+]}{E[(X-w)_+]^2}  dw,\nonumber
	\end{align}
	which, together with \eqref{varf} and \eqref{prob_phi}, gives \eqref{varfv}.
	
\end{proof}

\subsection{Examples}
\label{examples_distribution}

In the following examples we 
compute expressions for the sale price distribution 
corresponding to three offer distributions, using the formulas derived above.

In contrast with the results for the 
expectation $\mu(t)=E[S_t]$ in subsection 
\ref{mu_examples}, in which the residual
distribution $F_0(x)$ was arbitrary (with a given expectation $\mu_0$), the full  distribution of $S_t$ depends 
on both on $F(x)$ and on $F_0(x)$.
In our examples below, we will assume that
that $F_0=F$, which means that if the deadline is reached before a sale has been made, one waits for the next offer and accepts it unconditionally. In particular this implies that
$$\mu_0=E[X_0]=E[X].$$
One can easily modify the calculations for $F_0\neq F$, as desired: for example, in the case when the sale is lost if the deadline is reached, the corresponding results will be obtained by setting $F_0(x)=1$, $\mu_0=0$.

We also perform simulations of the bidding process, using
the optimal policy, and demonstrate 
the perfect fit of the analytical results obtained above with the simulation results. By generating many such
simulations we obtain a histogram of the realized sale prices 
attained, and can compare these histograms to the 
analytical expressions for the probability density 
functions of the sale price. Figure \ref{fcode} displays simple Matlab code for
generating simulations of the bidding process.

%
%
%
%
%
%
%

\begin{figure}[h!]
	\begin{lstlisting}
		function [S,T]=simulate(lambda,t,a,b,N) % a,b - parameters for uniform distribution, N - number of simulations
		
    	  	for n=1:N 
	        	t_rem=t; % initiate remaining time
	        	accept=0; % flag for offer being accepted
	        	while t_rem>0 && accept==0
	             	r=exprnd(1/lambda); % time to arrival of offer
	            	t_rem=t_rem-r; % update remaining time
	            	if t_rem>0
	                 	th=b-2*(b-a)/(lambda*t_rem+4); % compute threshold price
	                 	X=unifrnd(a,b); % draw offer
	                 	if X>=th % accept offer if above threshold
	                         	accept=1;
	                        	S(n)=X;
	                        	T(n)=t-t_rem;
	                 	end  
	            	end
	        	end
	        	if accept==0  %  deadline reached before accepting offer
	            	S(n)=unifrnd(a,b); % draw from residual distribution
	            	T(n)=t;
    	    	end
	    	end
		end
		\end{lstlisting}
	\caption{Matlab code for simulating a bidding process $N$ times, using the optimal policy, when the offer distribution and the residual distribution are uniform on $[a,b]$. The function outputs vectors $S,T$ of length $N$ (number of simulations), with $S$ containing the realized sale prices and $T$ containing the time durations until the sale was made.}
	\label{fcode}
\end{figure}

\subsubsection{Uniform offer distribution}
\label{dist_uniform}

We continue the example of subsection
\ref{policy_uniform}.
Using \eqref{phi_uniform} and 
\eqref{mu_uniform_g} with $\mu_0=E(X_0)=E(X)=\frac{a+b}{2}$ 
we have
\begin{equation*}\label{mu_uniform}
	\mu(t)=b-\frac{2(b-a)}{\lambda t +4},
\end{equation*}
\begin{equation}\label{uni_phi_mu}\phi(\mu(t))=\frac{2(b-a)}{(\lambda t +4)^2},\;\;\;\phi(\mu_0)=\frac{b-a}{8},\end{equation}
\begin{equation}\label{int0}\int_{\mu_0}^{x}\frac{dw}{\phi(w)^2}=\frac{4}{3}(b-a)^2\left[\frac{1}{(b-x)^3}-\frac{8}{(b-a)^3}  \right],\;\;\;\int_{\mu_0}^{\mu(t)}\frac{dw}{\phi(w)^2}=\frac{1}{6}\frac{1}{b-a}\left[\left(\lambda t +4\right)^3-64  \right].
\end{equation}
Plugging these expressions 
into \eqref{density} we obtain
\begin{equation}\label{uni_den}g_t(x)
	= \frac{1}{(b-a)\cdot (\lambda t +4)^2}\cdot  \begin{cases}
		0 & x<a\\
		16 & a\leq x<\frac{a+b}{2}\\
		\frac{8}{3}\left[\frac{(b-a)^3}{(b-x)^3}-2  \right] & \frac{a+b}{2}\leq x<b-\frac{2(b-a)}{\lambda t +4}\\
		\frac{1}{3}\left[\left(\lambda t +4\right)^3-16  \right]	& b-\frac{2(b-a)}{\lambda t +4}\leq x<b\\
		0 & x\geq b
	\end{cases}
	.\end{equation}
Examples of plots of this density function, for the case $a=1,b=3$, 
together with a histogram of simulation results for the same parameters, is given in Figure \ref{f2}.
Note that the support of the distribution is 
divided into $3$ intervals, whose lengths depend on $t$, with the density constant on the left and right interval, and given by a rational function on the middle interval.
\begin{figure}
	\centering
	\includegraphics[height=6cm, angle=0]{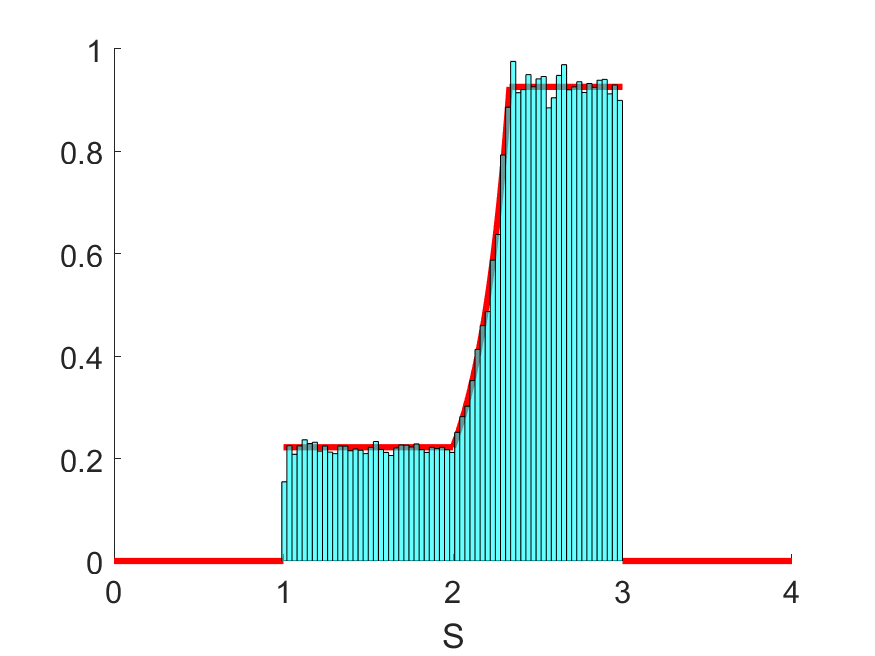}
	\includegraphics[height=6cm, angle=0]{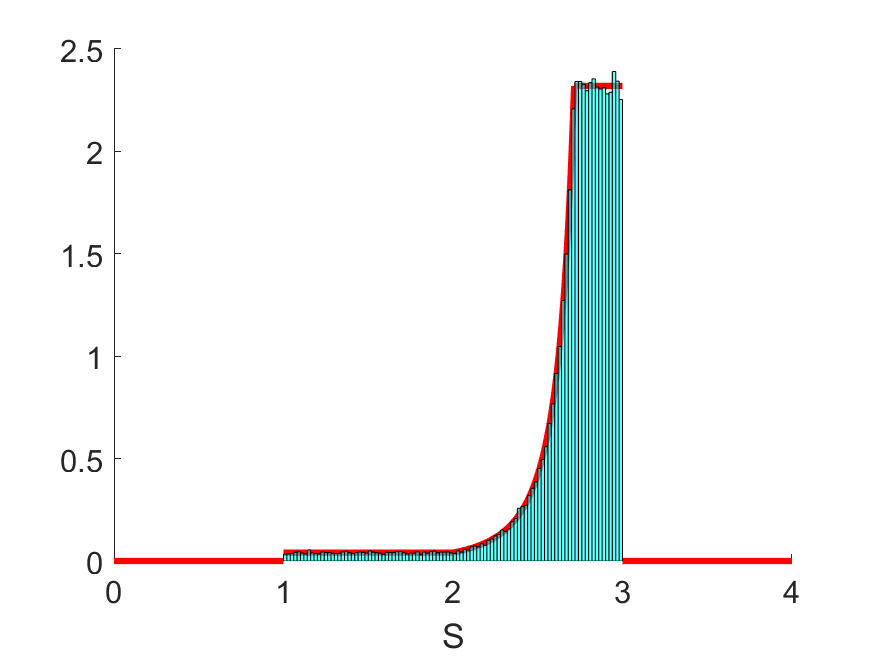}
	\caption{Histograms of sale prices attained in $N=10^5$ simulations of the bidding process, employing the optimal policy, where $\lambda=1$, and the offer distribution and residual distributions are uniform on $[1,3]$. The red line shows the analytical expression for the probability density, given by \eqref{uni_den}. Left: t=2, Right: t=10.}
	\label{f2}
\end{figure}


Using \eqref{varf} to compute the variance of $S_t$, we obtain
\begin{equation}\label{uni_var}Var[S_t]=\frac{4(b-a)^2}{3(\lambda t +4)^2} \left[2
	\ln\left(\frac{1}{4}\lambda t+1 \right)+1\right].\end{equation}
It can be checked that $Var[S_t]$ 
is monotone decreasing in $t$, with $Var[S_t]\rightarrow 0$ 
as $t\rightarrow \infty$.

\subsubsection{Exponential offer distribution}
\label{dist_exponential}

We continue the example of subsection \ref{policy_exponential}, assuming now 
$F_0(x)=F(x)$, so that $\mu_0=\eta$.
Using \eqref{phi_exp} and \eqref{mu_exp}, we have
\begin{equation}\label{exp_phi_mu}\phi(\mu(t))=\frac{\eta}{\lambda t+e},\end{equation}
$$\int_{\mu_0}^x\frac{dw}{\phi(w)^2}=\frac{1}{\eta^2}\int_{\eta}^xe^{\frac{2w}{\eta}}dw=\frac{1}{2\eta}\left[e^{\frac{2x}{\eta}}-e^{2} \right],\;\;\;\int_{\mu_0}^{\mu(t)} \frac{dw}{\phi(w)^2}=\frac{1}{2\eta}\lambda t\left[\lambda t+2e \right],$$
$$\phi(\mu_0)=\eta e^{-1},\;\;\;1-F_0(x)=e^{-\frac{x}{\eta}},$$
and substituting these expressions into
\eqref{density} gives
	
	\begin{equation}\label{exp_den}g_t(x)=\begin{cases}
			0 & x<0\\
			\frac{1}{\eta}\cdot \frac{e}{\lambda t+e}\cdot e^{-\frac{x}{\eta}} & 0\leq x<\eta\\
			\frac{1}{\eta}\cdot \frac{e}{\lambda t+e}\left[\frac{1}{2}e^{\frac{x}{\eta}-1}-\frac{1}{2}e^{1-\frac{x}{\eta}}+e^{-\frac{x}{\eta}} \right]   & \eta\leq x<\eta \ln\left(\lambda t+e \right)\\
			\frac{1}{2\eta}\left[ \lambda t+e -\frac{(e-2)e}{\lambda t+e}\right]e^{-\frac{x}{\eta}}	& x\geq\eta \ln\left(\lambda t+e \right).
	\end{cases}\end{equation}
	Figure \ref{f3} shows examples of this 
	density function for two values of $t$, together with simulation results.
	
	\begin{figure}
		\centering
		\includegraphics[height=6cm, angle=0]{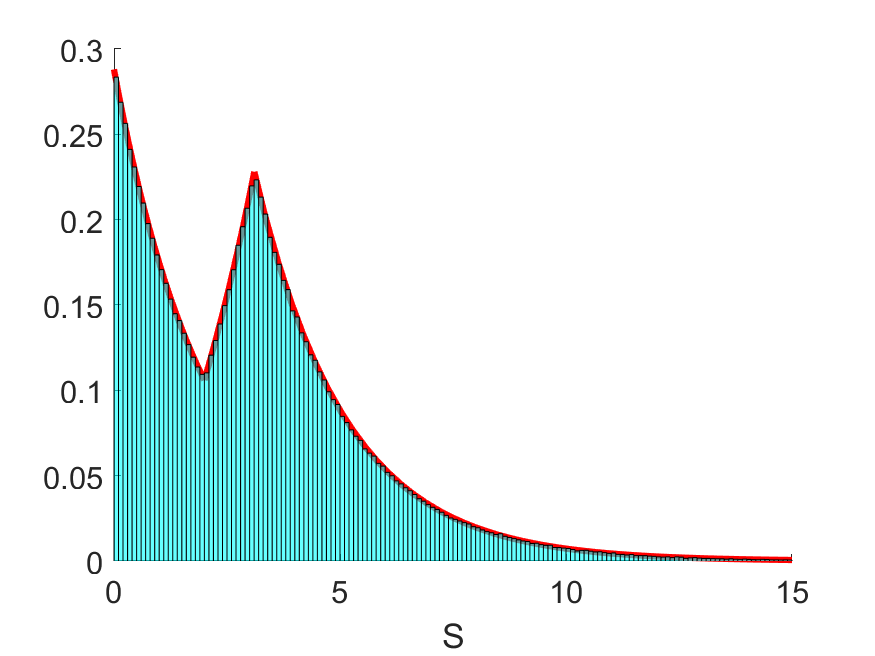}
		\includegraphics[height=6cm, angle=0]{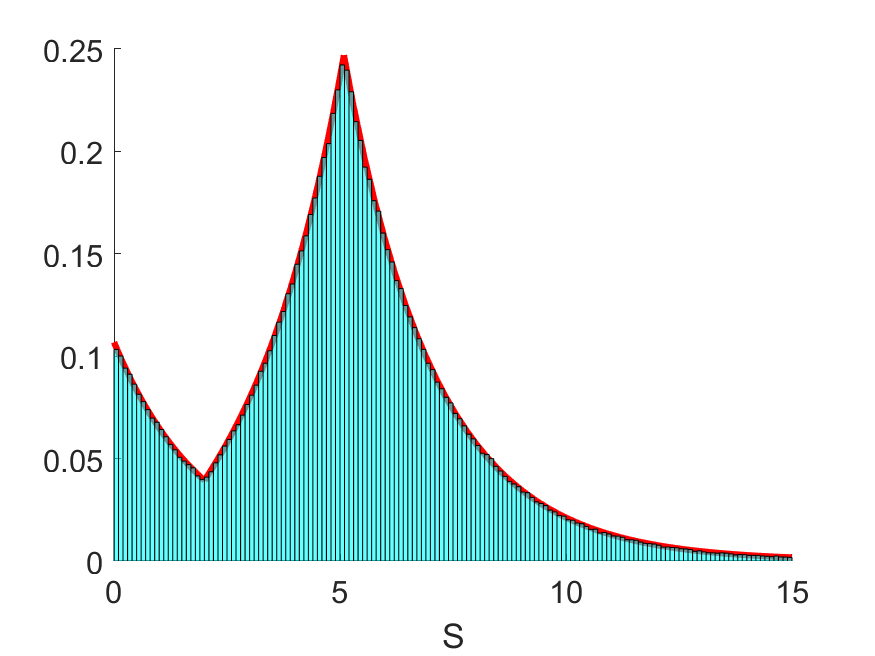}
		\caption{Histograms of sale prices attained in $N=10^5$ simulations of the bidding process, employing the optimal policy, where $\lambda=1$ and the offer and residual distributions are exponential with mean $\eta=2$. The red line shows the analytical expression for the probability density, given by \eqref{exp_den}.
		Left: $t=2$, Right: $t=10$.}
		\label{f3}
	\end{figure}
	
	Using \eqref{varf} to compute the
	variance of $S_t$, we obtain:
	\begin{equation}\label{var_expo}	Var[S_t]=\left[1+\frac{
			\lambda t}{\lambda t+e}\right]\cdot \eta^2.\end{equation}
	Note that as $t$ varies from $0$ to 
	$+\infty$, $Var[S_t]$ increases from
	$\eta^2$ to $2\eta^2$.

	\subsubsection{Pareto offer distribution}
	
	\begin{figure}
		\centering
		\includegraphics[height=6cm, angle=0]{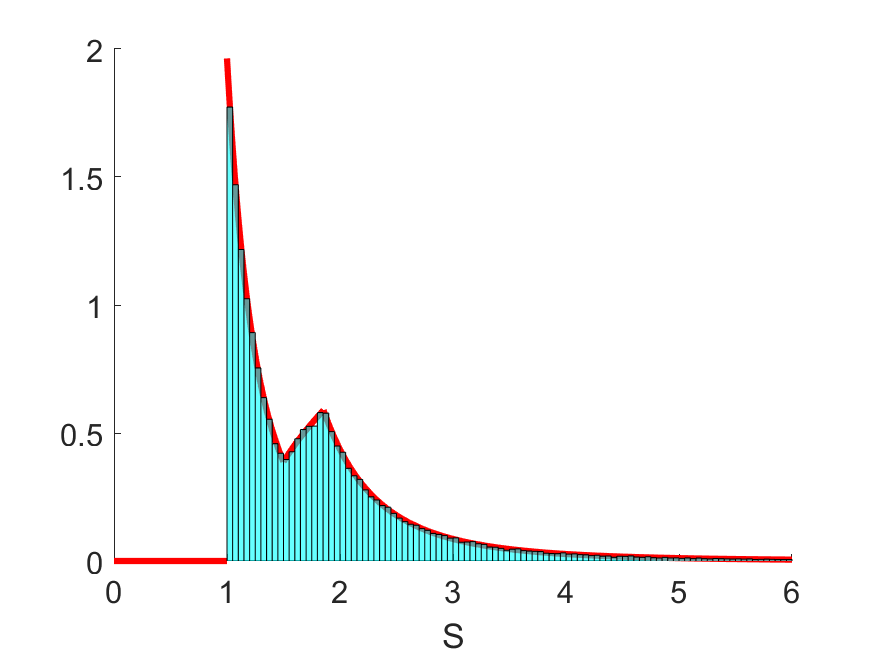}
		\includegraphics[height=6cm, angle=0]{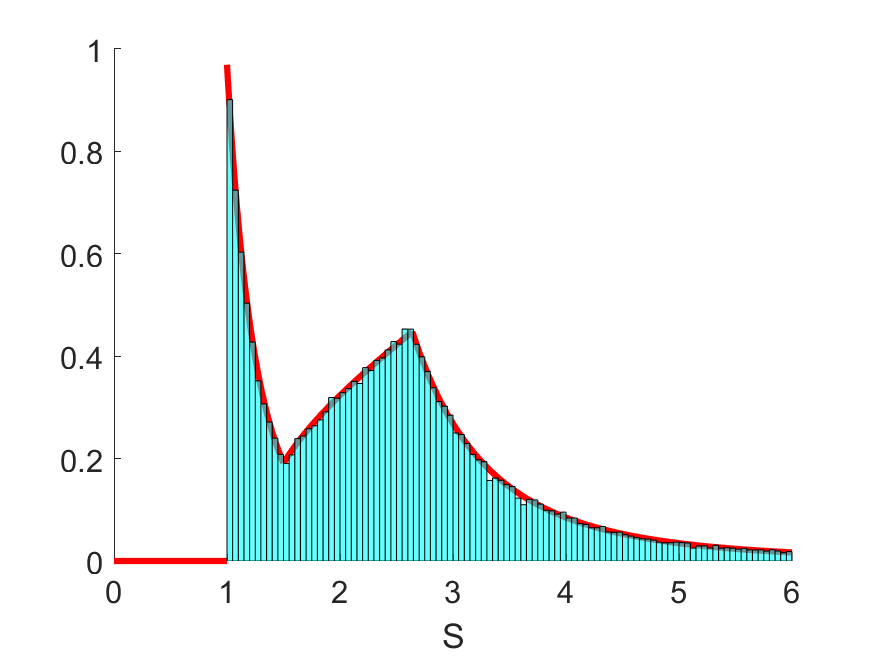}
		\caption{Histograms of sale prices attained in $N=10^5$ simulations of the bidding process, employing the optimal policy, where $\lambda=1$ and the offer and residual distributions are Pareto with $x_m=1,\alpha=3$. The red line shows the analytical expression for the probability density, given by \eqref{pareto_den}. Left: $t=2$, Right: $t=10$.}
		\label{f4}
	\end{figure}
	
	We continue the example of subsection \ref{policy_pareto}, now setting
	$$F_0(x)=F(x),\;\;\mu_0=E(X_0)=\frac{\alpha}{\alpha-1}\cdot x_m.$$
	By \eqref{phi_pareto},\eqref{mu_pareto_g},\eqref{defc} we have
	$$c=\frac{\alpha}{\alpha-1}\left(\frac{x_m}{\mu_0} \right)^\alpha=\left(\frac{\alpha-1}{\alpha} \right)^{\alpha-1},$$
	$$
	\mu(t)=\mu_0\left[c\lambda t+1\right]^{\frac{1}{\alpha}},$$
	\begin{equation}\label{cp}\phi(x)=\frac{x_m^\alpha}{\alpha-1}\cdot \frac{1}{x^{\alpha-1}},\;\;\;\phi(\mu(t))=\frac{1}{\lambda} \mu'(t)=\frac{1}{\alpha}\mu_0c\left[c \lambda t+1\right]^{\frac{1}{\alpha}-1},\;\;\;\phi(\mu_0)=\frac{\mu_0 c}{\alpha},\end{equation}
	$$\int_{\mu_0}^x \frac{dw}{\phi(w)^2}=\frac{(\alpha-1)^2}{x_m^{2\alpha}}\int_{\mu_0}^x w^{2\alpha-2}=\frac{(\alpha-1)^2}{2\alpha-1}\frac{1}{x_m^{2\alpha}}\left[x^{2\alpha-1}-\mu_0^{2\alpha-1} \right]=\frac{\alpha^2}{2\alpha-1}\frac{1}{\mu_0 c^2}\left[\left(\frac{x}{\mu_0}\right)^{2\alpha-1}-1 \right],$$
	$$\int_{\mu_0}^{\mu(t)} \frac{dw}{\phi(w)^2}=\frac{\alpha^2}{2\alpha-1}\frac{1}{\mu_0 c^2}\left[[c\lambda t+1]^{\frac{2\alpha-1}{\alpha}}-1 \right].$$
	Plugging the above expressions into 
	\eqref{density} gives us the probability density for the 
	sale price ($x\geq x_m$)
	\begin{equation}
		\label{pareto_den}
	g_t(x)
	=\frac{(\alpha-1) \mu_0^\alpha}{\left[c \lambda t+1\right]^{1-\frac{1}{\alpha}}}\cdot \frac{1}{x^{\alpha+1}} \left[c+\frac{\alpha}{2\alpha-1} \cdot \begin{cases}
		0 & x<\mu_0\\
		\left(\frac{x}{\mu_0}\right)^{2\alpha-1}-1  & \mu_0\leq x<\mu_0\left[c\lambda t+1\right]^{\frac{1}{\alpha}}\\
		[c\lambda t+1]^{\frac{2\alpha-1}{\alpha}}-1	& x>\mu_0\left[c\lambda t+1\right]^{\frac{1}{\alpha}}
	\end{cases}\right]
	.\end{equation}

Figure \ref{f4} presents an 
example of this distribution, together with a histogram of simulation results.

To compute the variance of the sale price, we first note that for the Pareto distribution, assuming $\alpha>2$,
$$Var[X_0]=\frac{\alpha x_m^2}{(\alpha-1)^2(\alpha-2)},$$
and plug the above expressions into
\eqref{varf} to obtain
\begin{equation}\label{var_pareto}Var[S_t]=\frac{\alpha x_m^2}{(\alpha-2)(\alpha^2-1)}\left[
2 \left(\frac{\alpha}{\alpha-1}\right)[ c\lambda  t+1]^{\frac{2}{\alpha}}-\frac{1}{[ c\lambda  t+1]^{\frac{\alpha-1}{\alpha}} } \right].\end{equation}

\section{The time to sale}
\label{time_to_sale}

Another important random variable related to the 
bidding process is the time to sale, that is the stopping time, assuming the optimal policy is followed. For any marketing period $t\geq 0$, we will denote this random variable by $T_t$.
Obviously $T_t$ takes values in $[0,t]$.
It is of interest to evaluate the characteristics of $T_t$, e.g. its expectation and variance. In fact we will obtain the full distribution of $T_t$. 
We note that the distribution of $T_t$ has an
atom at the value $t$, that is $P(T_t=t)>0$ - this is the probability that a sale is not made before the deadline. 

It will also be of interest to examine the random variable
\begin{equation}\label{htt}\hat{T}_t=\frac{1}{t}T_t\in [0,1],\end{equation}
which signifies the proportion of 
the available marketing period that will in fact be used.

\subsection{Determining the stopping time distribution}

\begin{thm}\label{tts}
The cumulative distribution function of $T_t$ is given by
\begin{equation}\label{ts}H_t(r)=P(T_t\leq r)=\begin{cases}
		1-\frac{\phi(\mu(t))}{\phi(\mu(t-r))} & r<t\\
		1 & r\geq t
	\end{cases}.
\end{equation}
In particular, the 
probability that a sale is not made 
up to the deadline is given by
$$P(T_t=t)=H_t(t+)-H_t(t-)=\frac{\phi(\mu(t))}{\phi(\mu_0)}.$$
\end{thm}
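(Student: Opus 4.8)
The plan is to exploit the Poissonian structure of the offer stream directly, by a thinning argument. Fix the marketing period $t>0$, run the optimal policy, and let $s\in[0,t)$ denote elapsed time, so that the time remaining to the deadline is $t-s$. Offers arrive as a Poisson process of rate $\lambda$, and the optimal policy accepts an offer of value $X$ arriving at elapsed time $s$ exactly when $X\ge\mu(t-s)$. Since the offer values are i.i.d.\ and independent of the arrival times, an offer arriving at elapsed time $s$ is accepted, independently of everything else, with probability $1-F(\mu(t-s))$; hence, by the marking (thinning) theorem for Poisson processes, the accepted offers form an inhomogeneous Poisson process on $[0,t)$ with intensity $\lambda\,[1-F(\mu(t-s))]$. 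The time to sale $T_t$ is then the location of the first point of this thinned process, when such a point falls in $[0,t)$, and $T_t=t$ otherwise; therefore, for $r<t$,
\[
P(T_t>r)=\exp\!\Big(-\lambda\int_0^r \big[1-F(\mu(t-s))\big]\,ds\Big).
\]

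The second step is to bring this integral into closed form. I would substitute $w=\mu(t-s)$: since $\mu'=\lambda\phi(\mu)$ by Theorem \ref{optimal}, and $\phi>0$ on $(-\infty,M)$ so that $\mu$ is strictly increasing, this gives $dw=-\lambda\phi(w)\,ds$, with the limits becoming $w=\mu(t)$ at $s=0$ and $w=\mu(t-r)$ at $s=r$; hence
\[
\lambda\int_0^r \big[1-F(\mu(t-s))\big]\,ds=\int_{\mu(t-r)}^{\mu(t)}\frac{1-F(w)}{\phi(w)}\,dw .
\]
The key observation is that, by the definition \eqref{defphi} of $\phi$, one has $\phi'(w)=-(1-F(w))$, so the integrand equals $-\frac{d}{dw}\ln\phi(w)$ and the integral telescopes to $\ln\phi(\mu(t-r))-\ln\phi(\mu(t))=\ln\big(\phi(\mu(t-r))/\phi(\mu(t))\big)$. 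Substituting back yields $P(T_t>r)=\phi(\mu(t))/\phi(\mu(t-r))$, i.e.\ $H_t(r)=1-\phi(\mu(t))/\phi(\mu(t-r))$ for $r<t$; and for $r\ge t$ the sale (or, failing that, the deadline) has certainly occurred by elapsed time $t$, so $H_t(r)=1$. This is \eqref{ts}. To obtain the atom, I would let $r\to t-$: since $\mu$ is continuous with $\mu(0)=\mu_0$ and $\phi$ is continuous and positive on $[\mu_0,M)$, we get $H_t(t-)=1-\phi(\mu(t))/\phi(\mu_0)$, whereas $H_t(t+)=1$, so $P(T_t=t)=H_t(t+)-H_t(t-)=\phi(\mu(t))/\phi(\mu_0)$.

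The calculations above are routine; the step I expect to need the most care is the thinning claim itself, namely that the accepted offers genuinely form a Poisson process with the stated time-dependent intensity. If one prefers to avoid invoking the marking theorem, the same expression for $P(T_t>r)$ can be obtained by a first-step analysis: conditioning on the arrival time $D$ and value $X$ of the first offer gives, for $r<t$, the renewal-type identity
\[
P(T_t>r)=e^{-\lambda r}+\int_0^r\lambda e^{-\lambda s}F(\mu(t-s))\,P(T_{t-s}>r-s)\,ds,
\]
and, multiplying by $e^{\lambda r}$ and differentiating along the characteristics $t-r=c$, this reduces to the linear ODE $\frac{d}{dr}P(T_{c+r}>r)=-\lambda[1-F(\mu(c+r))]\,P(T_{c+r}>r)$ (with $c$ fixed), whose solution subject to $P(T_c>0)=1$ is precisely the exponential expression displayed above.
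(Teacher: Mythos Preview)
Your proof is correct and follows essentially the same approach as the paper: both obtain $P(T_t>r)=\exp\bigl(-\lambda\int_0^r[1-F(\mu(t-s))]\,ds\bigr)$---the paper via an informal ``infinitesimal probability'' argument, you via the Poisson thinning theorem---and then evaluate the integral by recognizing, via $\phi'=-(1-F)$, a logarithmic derivative of $\phi\circ\mu$ (the paper simply cites its earlier identity \eqref{ii} for this). Your alternative first-step/ODE derivation is an extra, correct justification not present in the paper, but the main line is identical.
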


\begin{proof}
The probability that a sale occurs during
the infinitesimal time interval $[s,s+ds]$ after the 
start of the bidding process is
$$\lambda [1-F(\mu(t-s))]ds,$$
where $\lambda ds$ is the probability that
an offer is received during this time interval, and $1-F(\mu(t-s))$
is the probability that such an offer is accepted. Therefore the probability that
a sale is {\it{not}} made up to time $r$ into the marketing period is
\begin{equation}\label{H}P(T_t\geq r)=\begin{cases}
		e^{-\lambda\int_0^r [1-F(\mu(t-s))]ds} & r<t\\
		1 & r\geq t.
	\end{cases}.\end{equation}
Using the identity \eqref{ii}, we have
$$\lambda\int_0^r [1-F(\mu(t-s))]ds=\lambda\int_{t-r}^t [1-F(\mu(s))]ds=
\ln\left(\frac{\phi(\mu(t-r))}{\phi(\mu(t))}\right),$$
which, together with \eqref{H}, implies \eqref{ts}.
\end{proof}

Once we have determined 
the distribution of $T_t$, we immediately obtain the distribution of
$\hat{T}_t$ given by \eqref{htt}:
\begin{equation}\label{htt1}\hat{H}_t(s)=P(\hat{T}_t\leq s)
=P(T_t\leq st)=H_t(st)
=\begin{cases}
	1-\frac{\phi(\mu(t))}{\phi(\mu((1-s)t))} & s<1\\
	1 & s\geq 1
\end{cases}.
\end{equation}

\subsection{Expectation and variance of the time to sale}

Using the expression \eqref{ts}, we can compute the expectation and variance of $T_t$.

\begin{thm}\label{evT} We have
\begin{equation}\label{ET}E[T_t]=\frac{1}{\lambda}\phi(\mu(t))\int_{\mu_0}^{\mu(t)}\frac{dw}{\phi(w)^2},\end{equation}
\begin{equation}\label{VT}
	Var[T_t]=\frac{1}{\lambda^2}\left[2\phi(\mu(t))\int_{\mu_0}^{\mu(t)}\frac{1}{\phi(u)}\int_{\mu_0}^{u}\frac{1}{\phi(w)^2}dwdu-\left[\phi(\mu(t))\int_{\mu_0}^{\mu(t)}\frac{dw}{\phi(w)^2}\right]^2\right].
\end{equation}
\end{thm}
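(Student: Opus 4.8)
The plan is to read off both moments directly from the explicit distribution function $H_t$ furnished by Theorem \ref{tts}, using the tail representations of the first and second moments of a nonnegative random variable, combined with the change of variable that already appeared in \eqref{ii} (namely $w=\mu(s)$, $dw=\mu'(s)\,ds=\lambda\phi(\mu(s))\,ds$, valid by the ODE $\mu'=\lambda\phi(\mu)$ of Theorem \ref{optimal}).

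First I would treat $E[T_t]$. Since $T_t\ge 0$ and $P(T_t>r)=0$ for $r\ge t$, we have $E[T_t]=\int_0^\infty P(T_t>r)\,dr=\int_0^t\bigl(1-H_t(r)\bigr)\,dr=\phi(\mu(t))\int_0^t\frac{dr}{\phi(\mu(t-r))}$. Substituting $s=t-r$ gives $\phi(\mu(t))\int_0^t\frac{ds}{\phi(\mu(s))}$, and then the substitution $w=\mu(s)$ together with $\mu(0)=\mu_0$ yields $E[T_t]=\frac1\lambda\phi(\mu(t))\int_{\mu_0}^{\mu(t)}\frac{dw}{\phi(w)^2}$, which is \eqref{ET}. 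All the integrals here (and below) are finite because $\phi(\mu(s))$ is continuous and strictly positive on $[0,t]$: indeed $\mu_0\le\mu(s)<M$ by \eqref{mM} and the monotonicity of $\mu$, while $\phi>0$ on $(-\infty,M)$.

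Next I would compute $E[T_t^2]=\int_0^\infty 2r\,P(T_t>r)\,dr=2\phi(\mu(t))\int_0^t\frac{r\,dr}{\phi(\mu(t-r))}$, which after $s=t-r$ becomes $2\phi(\mu(t))\int_0^t\frac{t-s}{\phi(\mu(s))}\,ds$. The one nonroutine step is to write $t-s=\int_s^t du$ and interchange the order of integration over the triangle $\{0\le s\le u\le t\}$, obtaining $E[T_t^2]=2\phi(\mu(t))\int_0^t\Bigl(\int_0^u\frac{ds}{\phi(\mu(s))}\Bigr)du$. Applying the substitution $w=\mu(s)$ to the inner integral and then $u\mapsto\mu(u)$ to the outer one, each producing a factor $1/(\lambda\phi(\cdot))$, gives $E[T_t^2]=\frac{2}{\lambda^2}\phi(\mu(t))\int_{\mu_0}^{\mu(t)}\frac{1}{\phi(u)}\int_{\mu_0}^{u}\frac{dw}{\phi(w)^2}\,du$. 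Subtracting $E[T_t]^2$, with $E[T_t]$ as in \eqref{ET}, then produces \eqref{VT}.

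I do not anticipate a genuine obstacle. The only points demanding care are: (i) checking that the atom of $T_t$ at $r=t$ does not contribute to the tail integrals, which it does not since it lies at the right endpoint and $P(T_t>r)=0$ there; and (ii) the bookkeeping in the double change of variables and the Fubini interchange for the second moment, whose legitimacy is ensured by the positivity and continuity of $\phi(\mu(\cdot))$ on $[0,t]$ noted above, so that every integrand is bounded.
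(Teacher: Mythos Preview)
Your proposal is correct and follows essentially the same approach as the paper: both compute $E[T_t]$ and $E[T_t^2]$ from the tail formula using $1-H_t(r)=\phi(\mu(t))/\phi(\mu(t-r))$, perform the substitution $w=\mu(s)$ dictated by the ODE $\mu'=\lambda\phi(\mu)$, and rearrange a double integral. The only cosmetic difference is that for $E[T_t^2]$ the paper splits $t-r$ as $t\cdot 1 - r$ and uses $r=\tfrac1\lambda\Psi(\mu(r))$ before swapping the order of integration, whereas you write $t-s=\int_s^t du$ and apply Fubini first; both routes land on the same double integral.
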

\begin{proof}
We have, using \eqref{ts},
$$E[T_t]=\int_0^\infty P(T_t\geq r)dr= \int_0^t\frac{\phi(\mu(t))}{\phi(\mu(t-r))}dr
=\phi(\mu(t))\int_0^t\frac{1}{\phi(\mu(r))}dr.$$
Making the change of variable
\begin{equation}\label{cv}w=\mu(r),\;\; dw=\mu'(r)dr=\lambda \phi(\mu(r))dr=\lambda \phi(w)dr\end{equation}
in the last integral, we have
\begin{equation}\label{i1}
	\int_0^t\frac{1}{\phi(\mu(r))}dr=\frac{1}{\lambda}\int_{\mu_0}^{\mu(t)}\frac{1}{\phi(w)^2}dw,
\end{equation}
so we obtain
\eqref{ET}.

Using the identity \eqref{identity} and \eqref{ts}, we have
\begin{align}\label{ct}
	E[T_t^2]&=2\int_0^t r[1-H_t(r)]dr=2\phi(\mu(t))\int_0^t \frac{r}{\phi(\mu(t-r))}dr\\
	&=2\phi(\mu(t))\int_0^t \frac{t-r}{\phi(\mu(r))}dr=2\phi(\mu(t))\left[t\int_0^t \frac{1}{\phi(\mu(r))}dr-\int_0^t \frac{r}{\phi(\mu(r))}dr\right].\nonumber
\end{align}
Making the change of variable \eqref{cv} in the last integral, noting that
$$w=\mu(r)=\Psi^{-1}(\lambda r)\;\;\Rightarrow\;\;r=\frac{1}{\lambda}\Psi(w),$$
we get
\begin{align}\label{h1}\int_0^t \frac{r}{\phi(\mu(r))}dr&=\frac{1}{\lambda^2 }\int_{\mu_0}^{\mu(t)} \frac{\Psi(w)}{\phi(w)^2}dw=
	\frac{1}{\lambda^2}\int_{\mu_0}^{\mu(t)} \frac{1}{\phi(w)^2}\int_{\mu_0}^w \frac{du}{\phi(u)}dw
	=\frac{1}{\lambda^2}\int_{\mu_0}^{\mu(t)} \frac{1}{\phi(u)}\int_{u}^{\mu(t)}\frac{1}{\phi(w)^2} dwdu.\end{align}
Using
$$t=\frac{1}{\lambda}\Psi(\mu(t))=\frac{1}{\lambda}\int_{\mu_0}^{\mu(t)}\frac{du}{\phi(u)},$$
and \eqref{i1}, we can write
\begin{equation}\label{h2}
	t\int_0^t \frac{1}{\phi(\mu(r))}dr=\frac{1}{\lambda^2}\int_{\mu_0}^{\mu(t)}\frac{1}{\phi(u)}\int_{\mu_0}^{\mu(t)}\frac{1}{\phi(w)^2}dwdu.
\end{equation}
Combining \eqref{ct},\eqref{h1},\eqref{h2}, we get
$$E[T_t^2]=\frac{2\phi(\mu(t))}{\lambda^2}\int_{\mu_0}^{\mu(t)}\frac{1}{\phi(u)}\int_{\mu_0}^{u}\frac{1}{\phi(w)^2}dwdu,$$
which together with \eqref{ET}, give \eqref{VT}.
\end{proof}

\subsection{Examples}
\label{examples_T}

We will now calculate the 
distribution of $T_t$ for several examples. 
In each case, we also perform simulations 
of the bidding process, where we take the residual distribution to be identical with the offer distribution ($F_0=F$), and display 
a histogram of the stopping time which 
is compared with the analytical result, showing a perfect fit. Note that 
in this case we display cumulative 
histograms, because of the fact that 
the distribution of $T_t$ has an 
atom at $T_t=t$. The rightmost bar 
in each histogram contains the simulations
corresponding to $T_t=t$.

\subsubsection{Uniform offer distribution}

\begin{figure}
\centering
\includegraphics[height=6cm, angle=0]{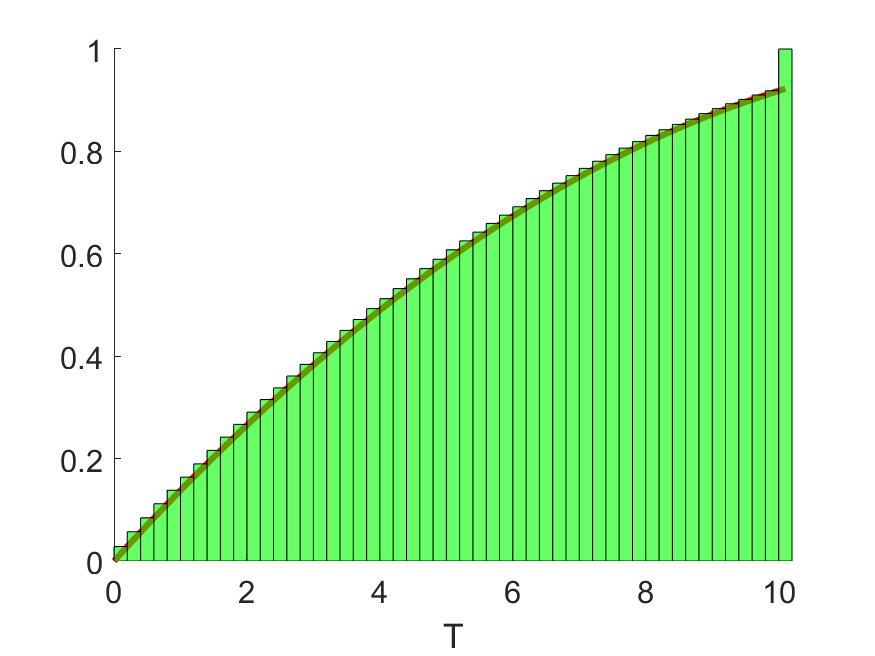}
\caption{Cumulative histogram of time to sale in $N=10^5$ simulations of the bidding process, employing the  optimal policy, where $\lambda=1,t=10$, and the offer and residual distributions are uniform on $[1,3]$. The red line shows the analytical expression for the cumulative density, given by \eqref{H_uniform}.}
\label{f5}
\end{figure}

We take a uniform offer distribution \eqref{uniform_denisity}, and allow the
residual distribution to be arbitrary, but
assume $\mu_0=E(X_0)$ satifies $\mu_0\in [a,b]$, so that 
\eqref{mu_uniform_g} is valid (if $\mu_0<a$ we would have to use \eqref{mu_uniform_g1} instead). 
It will be convenient to set
\begin{equation}\label{def_k}
k=\frac{2(b-a)}{b-\mu_0}
\end{equation}

We then have (using \eqref{ode})
$$\mu(t)=b-\frac{2(b-a)}{\lambda t+k},\;\;\;
\phi(\mu(t))=\frac{1}{\lambda}\mu'(t)=\frac{2(b-a)}{\left(\lambda t+k\right)^2},
$$
so that \eqref{ts} gives
\begin{equation}\label{H_uniform}H_t(r)=
\begin{cases}
	1-\left(1-\frac{\lambda r}{\lambda t+k}\right)^2 & r<t\\
	1 & r\geq t.
\end{cases}
\end{equation}

In particular the probability that a sale is not made before the deadline is
$$P(T_t=T)=H_t(t+)-H_t(t-)=\left(\frac{k}{\lambda t+k}\right)^2.$$
Figure \ref{f5} shows the graph of 
the CDF \eqref{H_uniform} 
when both the offer distribution and the residual distribution are uniform on $[1,3]$, so that $\mu_0=2$ and $k=4$, together with a histogram of the stopping time $T_t$ for $10^5$ simulations of the bidding process. 

Using Theorem \ref{evT}, we obtain

$$E[T_t]= \frac{1}{3\lambda }
\left[\lambda t+k - \frac{k^3}{\left(\lambda t+k\right)^2} \right]$$

$$Var[T_t]=\frac{\lambda t^3[(\lambda t)^3+6k(\lambda t)^2+15k^2\lambda t+12k^3]}{18(\lambda t+k)^4}$$




%
Note that 
\begin{equation}\label{atu}\lim_{t\rightarrow \infty}\frac{E[T_t]}{t}=\frac{1}{3},\;\;\;\lim_{t\rightarrow \infty}\frac{Var[T_t]}{t^2}=\frac{1}{18}.\end{equation}
The same asymptotic limits are obtained in \cite{Mazalov} for the discrete-time problem with a uniform offer distribution, in which case explicit 
expressions for $E[T_t]$, $Var[T_t]$ are not available.

Using \eqref{htt1}, we have that the distribution of 
$\hat{T}_t$ is given by
$$\hat{H}_t(s)=P(\hat{T}_t\leq s)
=	\begin{cases}
1-\left(1-\frac{\lambda ts }{\lambda t+k}\right)^2 & s<1\\
1 & s\geq 1.
\end{cases}
$$
hence, as $t\rightarrow \infty$, the distribution of  $\hat{T}_t$ converges to
\begin{equation}\label{limH1}H_\infty(s)=\lim_{t\rightarrow\infty}\hat{H}_t(s)=\begin{cases}
	2s-s^2 & s<1\\
	1 & s\geq 1
\end{cases}.\end{equation}

\subsubsection{Exponential offer distribution}

\begin{figure}
\centering
\includegraphics[height=6cm, angle=0]{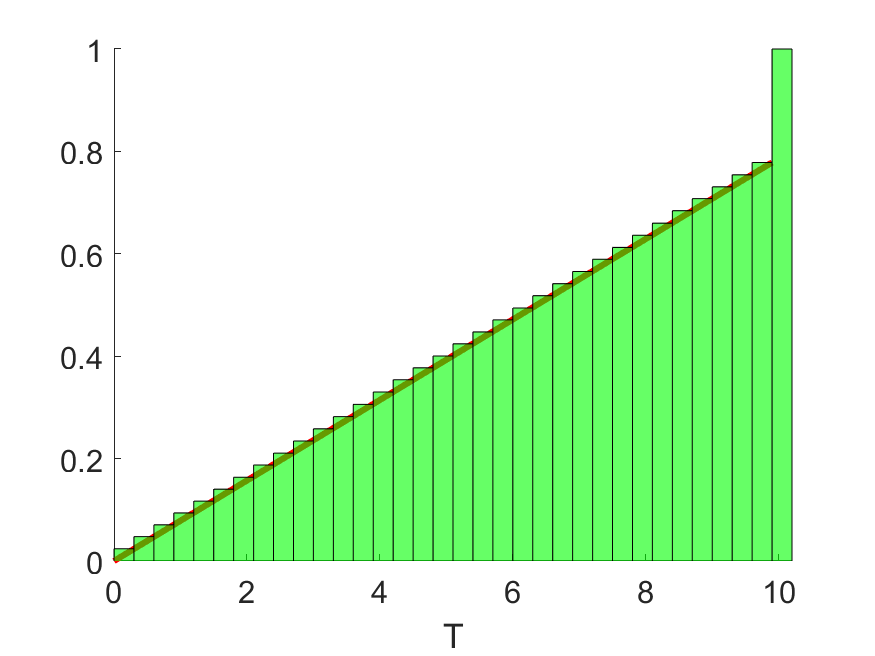}
\caption{Cumulative histogram of time to sale in $N=10^5$ simulations of the bidding process, employing the  optimal policy, where $\lambda=1,t=10$, and the offer and residual distributions are exponential with mean $\eta=2$. The red line shows the analytical expression for the cumulative density, given by \eqref{H_exponential}.}
\label{f6}
\end{figure}

We consider an exponential offer distribution \eqref{exp},
and an arbitrary residual distribution with $\mu_0=E[X_0]\geq 0$.
Using \eqref{mu_exp} we have
$$\phi(\mu(t))=\frac{1}{\lambda}\mu'(t)=\frac{\eta}{\lambda t+e^{\frac{\mu_0}{\eta}}},$$
so, by \eqref{ts}
\begin{equation}\label{H_exponential}H_t(r)=\begin{cases}\frac{\lambda r}{\lambda t+e^{\frac{\mu_0}{\eta}}}
	& r<t\\
	1& r\geq t.
\end{cases}
\end{equation}
The probability that a sale is not made before the deadline is
$$P(T_t=t)=H_t(t+)-H_t(t-)=\frac{e^{\frac{\mu_0}{\eta}}}{\lambda t+e^{\frac{\mu_0}{\eta}}}
.$$
The fact that $H_t(r)$ is linear in $r$ means that,
conditional on a sale being made before the deadline, the time at which the sale is made is distributed uniformly on $[0,t]$. 
Using Theorem \ref{evT}, we compute:
\begin{equation}\label{texp}E[T_t]=\frac{1}{2}t\cdot \left[1+\frac{e^{\frac{\mu_0}{\eta}} }{\lambda t+e^{\frac{\mu_0}{\eta}}} \right],\;\;\;Var[T_t]=\frac{\lambda t^3(\lambda t+4e^{\frac{\mu_0}{\eta}})}{12(\lambda t+e^{\frac{\mu_0}{\eta}})^2}.
\end{equation}
\eqref{texp} implies
\begin{equation}\label{texpa}\lim_{t\rightarrow \infty}\frac{E[T_t]}{t}=\frac{1}{2},\;\;\;\lim_{t\rightarrow \infty}\frac{Var[T_t]}{t^2}=\frac{1}{12},
\end{equation}
so that when the marketing period is sufficiently long, the expected time to sale will be
about half of the available time.
We note that the same asymptotic results as in \eqref{texpa} are obtained in the exponential distribution case for the discrete-time version of the problem in \cite{entwistle} (Table 2).

Using \eqref{htt1}, we have that the distribution of 
$\hat{T}_t$ is given by
$$\hat{H}_t(s)=P(\hat{T}_t\leq s)=\begin{cases}
\frac{\lambda t}{\lambda t +e^{\frac{\mu_0}{\eta}}}\cdot s & s<1\\
1 & s\geq 1,
\end{cases}$$
hence, as $t\rightarrow \infty$, the distribution of  $\hat{T}_t$ converges to a uniform distribution on $[0,1]$, 
\begin{equation}\label{limH2}\lim_{t\rightarrow\infty}\hat{H}_t(s)=\begin{cases}
	s & s<1\\
	1 & s\geq 1
\end{cases}.\end{equation}

\subsubsection{Pareto offer distribution}

\begin{figure}
\centering
\includegraphics[height=6cm, angle=0]{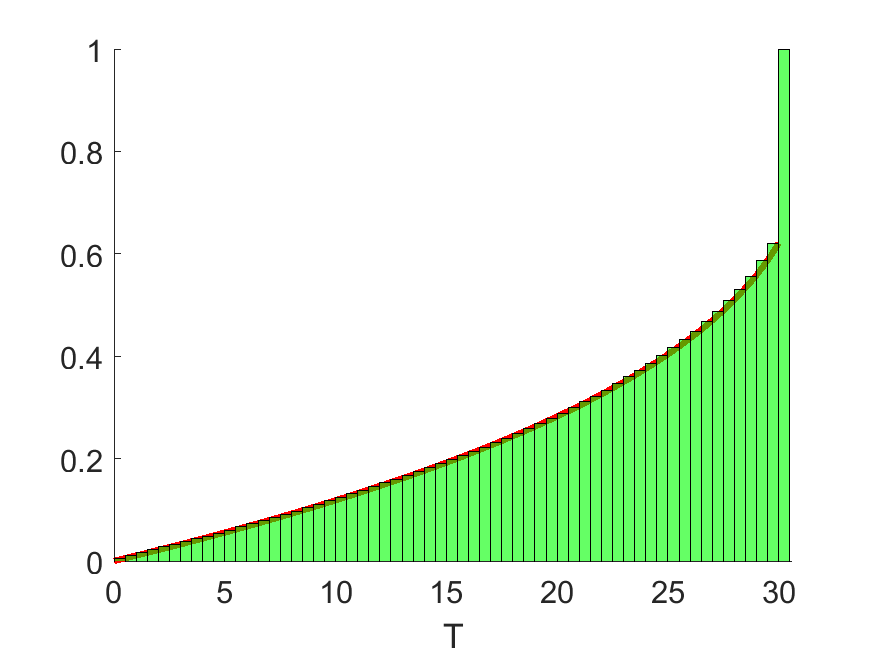}
\caption{Cumulative histogram of time to sale in $N=10^5$ simulations of the bidding process, employing the  optimal policy, where $\lambda=1,t=30$, and the offer distribution and residual distributions are Pareto, with $x_m=1,\alpha=1.5$. The red line shows the analytical expression for the cumulative density, given by \eqref{H_pareto}.}
\label{f7}
\end{figure}

Assuming  a Pareto offer distribution \eqref{pareto}, 
and an arbitrary residual distribution with $\mu_0=E[X_0]\geq x_m$, we have,
using \eqref{cp},\eqref{ts},
\begin{equation}\label{H_pareto}H_t(r)=\begin{cases}1-\left(1-\frac{c\lambda r}{ c\lambda t+1}\right)^{^{1-\frac{1}{\alpha}}} & r<t\\
	1 & r\geq t.
	\end{cases}\end{equation}
	where $c$ is given by \eqref{defc}.
	In particular the probability that 
	a sale is not made before the deadline is
	$$P(T_t=t)=H_t(t+)-H_t(t-)=\left(1-\frac{c\lambda r}{ c\lambda t+1}\right)^{^{1-\frac{1}{\alpha}}}.$$
	Using Theorem \ref{evT}, the expectation and variance of $T_t$ are
	
	%
	%
	%
	%
	%
	%
	%

	$$E[T_t]=\frac{1}{\lambda }\frac{\alpha-1}{2\alpha-1} \left(\frac{\mu_0}{x_m}\right)^\alpha\left[c\lambda t+1\right]\left[1-\left[c\lambda t+1\right]^{\frac{1}{\alpha}-2} \right]$$

	\begin{align*}Var[T_t]&=\frac{1}{\lambda^2}\frac{(\alpha-1)^2}{(2\alpha-1)}\left(\frac{\mu_0}{x_m} \right)^{2\alpha} [c\lambda t+1]^2\left(2 \left[\frac{1}{3\alpha-1}\left(1-\frac{1}{\left[c\lambda t+1 \right]^{3-\frac{1}{\alpha}}}\right)-\frac{1}{\alpha}\frac{c\lambda t}{\left[c\lambda t+1 \right]^{3-\frac{1}{\alpha}}} \right]\right.\\
&\left.	-\frac{1}{2\alpha-1}\left[1-\frac{1}{\left[c \lambda t+1\right]^{2-\frac{1}{\alpha}}} \right]^2\right).
\end{align*}



From these results we have
\begin{equation}\label{as_pareto}\lim_{t\rightarrow \infty}\frac{E[T_t]}{t}= \frac{\alpha}{2\alpha-1},\;\;\;\lim_{t\rightarrow \infty}\frac{Var[T_t]}{t^2}=\frac{\alpha^2(\alpha-1)}{(2\alpha-1)^2(3\alpha-1)}.
\end{equation}

We note that \eqref{as_pareto} is analogous to the asymptotic results for the time to sale obtained for the Pareto distribution in the discrete-time version of the problem in \cite{entwistle}
(Table 2), in which case explicit formulas for $E[T_t]$, $Var[T_t]$ are not 
available.

Using \eqref{htt1}, we have that the distribution of 
$\hat{T}_t$ is given by
$$\hat{H}_t(s)=P(\hat{T}_t\leq s)=\begin{cases}
1-\left(1-\frac{c \lambda t }{ c  \lambda t+1}\cdot s \right)^{^{\frac{1}{c}}} & s<1\\
1 & s\geq 1,
\end{cases}$$
hence, as $t\rightarrow \infty$, the distribution of  $\hat{T}_t$ converges to 
\begin{equation}\label{limH3}\lim_{t\rightarrow\infty}\hat{H}_t(s)=\begin{cases}
1-(1-s)^{1-\frac{1}{\alpha}} & s<1\\
1 & s\geq 1
\end{cases}.\end{equation}

\section{Long marketing period - asymptotic results}
\label{asy}

In this final section we 
exploit results obtained in previous sections in order to study the 
asymptotic behavior of some key 
quantities when the marketing period $t$ is long, $t\rightarrow \infty$. In subsection 
\ref{asymptotics1} we consider 
$\mu(t)=E(S_t)$, and $Var[S_t]$, and 
in section \ref{T_large} we study the limiting distribution of the random variable $\hat{T}_t=\frac{1}{t}T_t$, the fraction of the marketing period until a sale is made.

\subsection{Asymptotics of mean and variance of $S_t$}
\label{asymptotics1}

The next results describe
the asymptotic behavior of $E(S_t)=\mu(t)$ and $Var[S_t]$ for $t$ large, for three classes of offer distributions, defined 
by their tail behavior.
The results concerning $\mu(t)$ are 
analogous to the results of \cite{Kennedy} (see also \cite{Livanos} for related results), which analyzed the 
asymptotics of the threshold sequence 
$\mu_n$ in the discrete-time case.
The fact that in the continuous-time case we have explicit expressions for the relevant quantities allows 
the asymptotic analysis to be carried out by elementary tools.

\subsubsection{Offer distributions with  support bounded from above}

We now consider the case of offer distributions with $M<\infty$, where $M$ is given by \eqref{def_M}.
In this case, by \eqref{ll}, when
the available time is long ($t\rightarrow\infty$), we have $\mu(t)=E[S_t]\rightarrow M$. The following result characterizes the rate of convergence, showing that it is related to the behavior of the CDF $F(x)$ near the right
endpoint $M$.

The notation $\sim$ means that the ratio of the two sides approaches $1$, as $t\rightarrow \infty$. 

\begin{thm}\label{tep}
Assume 
$M<\infty$, where $M$ is given by \eqref{def_M}.
Assume also that, for some $p>0,c>0$, 
\begin{equation}\label{edge}\lim_{x\rightarrow M-}\frac{\phi(x)}{(M-x)^{p+1}}=c.\end{equation}
Then we have, as $t\rightarrow +\infty$
\begin{equation}\label{ba}\mu(t)=M-\frac{1}{(\lambda pc)^{\frac{1}{p} }}\cdot \frac{1}{t^{\frac{1}{p}}}+o\left( \frac{1}{t^{\frac{1}{p}}}\right) .\end{equation}
The asymptotic behavior of $Var[S_t]$ depends on $p$ as follows:

If $p<1$ then
\begin{equation}\label{p<1}
Var[S_t]\sim \frac{1}{(\lambda p)^{\frac{p+1}{p} }c^{\frac{1}{p}}}\left[(M-\mu_0)+\frac{Var[X_0]}{\phi(\mu_0)}+\int_{\mu_0}^{M}\frac{Var[(X-w)_+]}{E[(X-w)_+]^2}  dw \right]\cdot \frac{1}{t^{\frac{p+1}{p}}}.
\end{equation}
If $p=1$ then
\begin{equation}\label{p=1}
Var[S_t]\sim\frac{2}{3\lambda^2 c^2}\cdot \frac{\ln(t)}{t^2}.
\end{equation}
If $p>1$ then
\begin{equation}\label{p>1}
Var[S_t]\sim\frac{2}{p-1}\cdot \frac{1}{p+2}\cdot \frac{1}{ (\lambda pc)^{\frac{2}{p}}}\cdot \frac{1}{t^{\frac{2}{p}}}.
\end{equation}

\end{thm}

\begin{proof}

Recalling that $\Psi'(x)=\frac{1}{\phi(x)}$, we have
by \eqref{psi_inf}, L'H\^opital's rule, and \eqref{edge}
\begin{equation}\label{llm}
\lim_{x\rightarrow M-}\frac{\Psi(x)}{(M-x)^{-p}}=\lim_{x\rightarrow M-}\frac{\Psi'(x)}{p(M-x)^{-p-1}}
=\lim_{x\rightarrow M-}\frac{(M-x)^{p+1}}{p\phi(x)}=\frac{1}{pc}.
\end{equation}
Setting $x=\mu(t)$ in \eqref{llm}, and using the fact that 
$\Psi(\mu(t))=\lambda t$, we get
\begin{align*}&\lim_{t\rightarrow +\infty}\lambda t (M-\mu(t))^{p} =\lim_{t\rightarrow +\infty}\frac{\Psi(\mu(t))}{(M-\mu(t))^{-p}}=\frac{1}{pc}\;\;\Rightarrow\;\;(M-\mu(t))^{p} =\frac{1}{\lambda pc}\frac{1}{t}(1+o(1))\\
&\Rightarrow\;\;M-\mu(t) =\frac{1}{(\lambda pc)^{\frac{1}{p}}}\cdot\frac{1}{t^{\frac{1}{p}}}\left(1+o\left(1 \right)\right)^{\frac{1}{p}}=\frac{1}{(\lambda pc)^{\frac{1}{p}}}\cdot\frac{1}{t^{\frac{1}{p}}}\left(1+o\left(1 \right) \right),
\end{align*}
which implies \eqref{ba}.

We now turn to analyzing $Var[S_t]$.
We have $\phi(x)=0$ for 
$x>M$, hence
defining 
$$g(t)=\int_{\mu_0}^{\mu(t)}\frac{1}{\phi(w)^2} \int_{w}^M \phi(x) dx dw,$$
we have, using \eqref{varf},
\begin{equation}\label{varb}Var[S_t]=\phi(\mu(t)) \left[\frac{Var[X_0]}{\phi(\mu_0)} +2g(t)
\right]\end{equation}
Substituting $x=\mu(t)$ in \eqref{edge}
we have
\begin{equation}\label{ed1}
\lim_{t\rightarrow \infty}\frac{\phi(\mu(t))}{(M-\mu(t))^{p+1}}=c.
\end{equation}
From \eqref{ba} we have
\begin{equation}\label{ed0}\lim_{t\rightarrow \infty}\frac{M-\mu(t)}{t^{-\frac{1}{p}}}=\frac{1}{(\lambda pc)^{\frac{1}{p} }},\end{equation}
which implies
\begin{equation}\label{ed2}\lim_{t\rightarrow \infty}\frac{(M-\mu(t))^{p+1}}{t^{-\frac{p+1}{p}}}=\frac{1}{(\lambda pc)^{\frac{p+1}{p} }}.\end{equation}
Combining \eqref{ed1} and \eqref{ed2} we have
\begin{equation}\label{ed3}
\lim_{t\rightarrow \infty}\frac{\phi(\mu(t))}{t^{-\frac{p+1}{p}}}=\frac{c}{(\lambda pc)^{\frac{p+1}{p} }}=\frac{1}{(\lambda p)^{\frac{p+1}{p} }c^{\frac{1}{p}}}.
\end{equation}
We have
\begin{equation}\label{rr}g'(t)=\frac{\mu'(t)}{\phi(\mu(t))^2}\int_{\mu(t)}^M\phi(x)dx=\frac{\lambda}{\phi(\mu(t))}\int_{\mu(t)}^M\phi(x)dx.\end{equation}

Using the integral mean-value theorem we have
\begin{align*}\int_v^M \phi(x)dx&= \int_v^M \frac{\phi(x)}{(M-x)^{p+1}}\cdot (M-x)^{p+1} dx= \frac{\phi(\tilde{x})}{(M-\tilde{x})^{p+1}}\cdot\int_v^M  (M-x)^{p+1} dx\\
&=\frac{\phi(\tilde{x})}{(M-\tilde{x})^{p+1}}\cdot \frac{(M-v)^{p+2}}{p+2}\;\;
\Rightarrow\;\; \frac{1}{(M-v)^{p+2}}\int_v^M \phi(x)dx =\frac{1}{p+2}\cdot\frac{\phi(\tilde{x})}{(M-\tilde{x})^{p+1}},
\end{align*}
where $\tilde{x}\in (v,M)$, which together with \eqref{edge} implies 
\begin{equation}\label{em1}\lim_{v\rightarrow M-}\frac{1}{(M-v)^{p+2}}\int_v^M \phi(x)dx =\frac{c}{p+2}.\end{equation}
\eqref{edge} together with \eqref{em1} give
\begin{equation*}\lim_{v\rightarrow M-}\frac{1}{M-v}\cdot\frac{1}{\phi(v)}\int_{v}^M \phi(x)dx=\frac{1}{p+2},\end{equation*}
and substituting $v=\mu(t)$ we get
$$\lim_{t\rightarrow\infty}\frac{1}{M-\mu(t)}\cdot \frac{1}{\phi(\mu(t))}\int_{\mu(t)}^M \phi(x)dx=\frac{1}{p+2},$$
which, combined with \eqref{ed0}, gives
\begin{equation}\label{ed6}\lim_{t\rightarrow\infty}t^{\frac{1}{p}}\cdot \frac{1}{\phi(\mu(t))}\int_{\mu(t)}^M \phi(x)dx=\frac{1}{p+2}\cdot \frac{1}{(\lambda p c)^{\frac{1}{p}}}.\end{equation}
From \eqref{rr} and \eqref{ed6} we have
\begin{equation}\label{kp}\lim_{t\rightarrow\infty}t^{\frac{1}{p}}g'(t)=\frac{1}{p+2}\cdot \frac{\lambda}{(\lambda p c)^{\frac{1}{p}}}=\frac{1}{p+2}\cdot \frac{1}{(p c)^{\frac{1}{p}}\lambda^{\frac{1}{p}-1}}.\end{equation}
If $p<1$, \eqref{kp} implies that the integral $\int_{\mu_0}^\infty g'(t)dt$ is finite, so that
$$g(\infty)=\lim_{t\rightarrow +\infty} g(t)=\int_{\mu_0}^{\infty}\frac{1}{\phi(w)^2} \int_{w}^M \phi(x) dx dw<\infty,$$
and together with \eqref{varb},\eqref{ed3} we 
get
\begin{equation}\label{star}Var[S_t]=\frac{1}{(\lambda p)^{\frac{p+1}{p} }c^{\frac{1}{p}}}\left[\frac{Var[X_0]}{\phi(\mu_0)}+2g(\infty) \right]\cdot \frac{1}{t^{\frac{p+1}{p}}}(1+o(1)).
\end{equation}
Note that by the identity
\eqref{idk} we have
$$2g(\infty)=(M-\mu_0)+\int_{\mu_0}^{M}\frac{Var[(X-w)_+]}{E[(X-w)_+]^2}  dw,$$
so \eqref{star} implies \eqref{p<1}.

If $p>1$, then, using L'H\^opital's rule, \eqref{kp} implies
\begin{equation}\label{af}\lim_{t\rightarrow\infty}\frac{g(t)}{t^{\frac{p-1}{p}}}=\lim_{t\rightarrow\infty}\frac{g'(t)}{\frac{p-1}{p}\cdot t^{-\frac{1}{p}}}
=\frac{p}{p-1}\cdot \frac{1}{p+2}\cdot \frac{1}{(p c)^{\frac{1}{p}}\lambda^{\frac{1}{p}-1}}.
\end{equation}
Combining \eqref{ed3} and \eqref{af}, we get
$$ \phi(\mu(t))g(t)= \frac{1}{p-1}\cdot \frac{1}{p+2}\cdot \frac{1}{(\lambda pc)^{\frac{2}{p}}}\cdot \frac{1}{t^{\frac{2}{p}}} (1+o(1)),\;\;{\mbox{as}}\;\; t\rightarrow \infty.$$

From \eqref{ed3} we have
$$\phi(\mu(t))\frac{Var[X_0]}{\phi(\mu_0)}=\frac{Var[X_0]}{\phi(\mu_0)}\frac{1}{(\lambda p)^{\frac{p+1}{p} }c^{\frac{1}{p}}}\cdot \frac{1}{t^{\frac{p+1}{p}}}(1+o(1)),\;\;{\mbox{as}}\;\; t\rightarrow \infty,$$
and since $\frac{p+1}{p}>\frac{2}{p}$
we conclude that
$$Var[S_t]=\frac{2}{p-1}\cdot \frac{1}{p+2}\cdot \frac{1}{ (\lambda pc)^{\frac{2}{p}}}\cdot \frac{1}{t^{\frac{2}{p}}} (1+o(1)),\;\;{\mbox{as}}\;\; t\rightarrow \infty,$$
so we have \eqref{p>1}.

When $p=1$ we have, using  L'H\^opital's rule and \eqref{kp},
\begin{equation}\label{af1}\lim_{t\rightarrow\infty}\frac{g(t)}{\ln(t)}=\lim_{t\rightarrow\infty}\frac{g'(t)}{\frac{1}{t}}
=\frac{1}{3 c},
\end{equation}
and from \eqref{ed3} we have
$$\lim_{t\rightarrow \infty} t^2\phi(\mu(t))=\frac{1}{\lambda ^{2}c},$$
implying
$$Var[S_t]=\frac{2}{3\lambda^2 c^2}\cdot \frac{\ln(t)}{t^2}(1+o(1)),\;\;{\mbox{as}}\;\; t\rightarrow \infty.$$

\end{proof}

As an example for the application of the above theorem, for the Beta distribution on $[0,1]$,  \begin{equation}\label{Beta}
f(x)=\frac{\Gamma(\alpha+\beta)}{\Gamma(\alpha)\Gamma(\beta)}x^{\alpha-1}(1-x)^{\beta-1},\;\;\;\alpha,\beta>0,
\end{equation}
using L'H\^opital's rule we have
$$\lim_{x\rightarrow 1-}\frac{\phi(x)}{(1-x)^{\beta+1}}=-
\lim_{x\rightarrow 1-}\frac{F(c)-1}{(\beta+1)(1-x)^{\beta}}
=\lim_{x\rightarrow 1-}\frac{f(x)}{(\beta+1)\beta(1-x)^{\beta-1}}=\frac{\Gamma(\alpha+\beta)}{\Gamma(\alpha)\Gamma(\beta+2)},
$$
so we have \eqref{edge} with 
$M=1,p=\beta,c=\frac{\Gamma(\alpha+\beta)}{\Gamma(\alpha)\Gamma(\beta+2)}$,
so \eqref{ba} gives
\begin{equation}\label{beta}\mu(t)=M-\left(\frac{\Gamma(\alpha)\Gamma(\beta+2)}{\Gamma(\alpha+\beta)}\cdot \frac{1}{\beta \lambda } \right)^{\frac{1}{\beta}}\cdot \frac{1}{t^{\frac{1}{\beta}}}+o\left(\frac{1}{t^{\frac{1}{\beta}}} \right),\end{equation}
and the asymptotics of $Var[S_t]$ can also be obtained by the results above, depending on the value of $\beta$.

In particular, when $\alpha=\beta=1$ we get the uniform distribution on $[0,1]$, , and we have
$c=\frac{1}{2}$, so\eqref{beta} reduces to 
\eqref{ba0} with $a=0,b=1$, and \eqref{p=1} gives
$Var[S_t]\sim \frac{8}{3\lambda^2}\cdot \frac{\ln(t)}{t^2},$
which is consistent with \eqref{uni_var}.

\subsubsection{Offer distributions with an exponential right tail}
\label{ofexp}

We now consider the case in which 
the right-tail behavior of the offer distribution is exponential, in the sense made precise by \eqref{exptail} below.

\begin{thm}\label{thm:exptail}
Assume that $M=+\infty$ and the function $\phi(x)$ defined by \eqref{defphi} satisfies 
\begin{equation}\label{exptail}
\lim_{x\rightarrow \infty} \frac{\phi(x)}{\int_{x}^\infty \phi(u)du}=c,\;\;\;c>0.
\end{equation}
Then, as $t\rightarrow \infty$,
\begin{equation}\label{expc}
\mu(t)\sim\frac{1}{c}\ln(t),\;\;\;\mu'(t)\sim \frac{1}{ct},
\end{equation}
\begin{equation}\label{expv}
\lim_{t\rightarrow \infty} Var[S_t]=\frac{2}{c^2}.
\end{equation}
\end{thm}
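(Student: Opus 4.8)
The plan is to reduce the whole theorem to one asymptotic relation, $\Psi(x)\sim \dfrac{1}{c\,\phi(x)}$ as $x\to\infty$, and then feed it into the identity $\Psi(\mu(t))=\lambda t$ of Theorem~\ref{optimal} and into the variance formula \eqref{varf}.

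Set $\beta(x)\doteq\int_x^\infty\phi(u)\,du$, so that \eqref{exptail} reads $\phi(x)/\beta(x)\to c$. Note \eqref{exptail} forces $\beta(x)<\infty$ for large $x$ (otherwise the ratio is not a finite number), hence for all $x$ (since $\phi$ is continuous and positive on $\Real$), and then $\beta(x)\to 0$, so $1/\beta(x)\to\infty$; moreover $\beta$ is $C^1$ with $\beta'=-\phi$, and $1/\beta$ is $C^1$ with $(1/\beta)'=\phi/\beta^2$. Applying L'H\^opital's rule to the quotient $\big(\int_{\mu_0}^x \,du/\beta(u)\big)\big/\big(1/\beta(x)\big)$ (an $\infty/\infty$ form) gives
$$\lim_{x\to\infty}\frac{\int_{\mu_0}^x \,du/\beta(u)}{1/\beta(x)}=\lim_{x\to\infty}\frac{1/\beta(x)}{\phi(x)/\beta(x)^2}=\lim_{x\to\infty}\frac{\beta(x)}{\phi(x)}=\frac1c,$$
i.e.\ $\int_{\mu_0}^x du/\beta(u)\sim 1/(c\beta(x))$. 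Since $\big(1/\phi(u)\big)\big/\big(1/\beta(u)\big)=\beta(u)/\phi(u)\to 1/c$ and $\int_{\mu_0}^\infty du/\beta(u)=\infty$, integrating this equivalence yields $\Psi(x)=\int_{\mu_0}^x du/\phi(u)\sim\tfrac1c\int_{\mu_0}^x du/\beta(u)\sim 1/(c^2\beta(x))\sim 1/(c\,\phi(x))$. Passing to $\beta$ here is the one non-routine move: it puts the L'H\^opital step on genuinely $C^1$ functions, avoiding any need to differentiate $1-F=-\phi'$, about which \eqref{exptail} says nothing. I expect this first step to be the only real obstacle.

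Now substitute $x=\mu(t)$ (valid since $\mu(t)\to M=+\infty$): $\lambda t=\Psi(\mu(t))\sim 1/(c\,\phi(\mu(t)))$, hence $\phi(\mu(t))\sim 1/(c\lambda t)$. Then $\mu'(t)=\lambda\phi(\mu(t))\sim 1/(ct)$, which is the second assertion of \eqref{expc}; and integrating this equivalence (legitimate because $\mu'$ is continuous and positive and $\int^\infty ds/(cs)$ diverges), $\mu(t)=\mu(0)+\int_0^t\mu'(s)\,ds\sim\tfrac1c\ln t$, the first assertion.

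For the variance, rewrite \eqref{varf} using $\int_w^\infty\phi=\beta(w)$ as $Var[S_t]=\phi(\mu(t))\big[\,Var[X_0]/\phi(\mu_0)+2\int_{\mu_0}^{\mu(t)}\beta(w)/\phi(w)^2\,dw\,\big]$ (this presupposes $E[X_0^2]<\infty$, which is needed anyway for $Var[S_t]$ to be finite and is implicit in the statement). The first term contributes $\phi(\mu(t))\cdot Var[X_0]/\phi(\mu_0)\to 0$ since $\phi(\mu(t))\to 0$. For the second, $\beta(w)/\phi(w)^2=\big(\beta(w)/\phi(w)\big)\cdot\big(1/\phi(w)\big)\sim 1/(c\,\phi(w))$ as $w\to\infty$, and since $\int_{\mu_0}^\infty dw/\phi(w)=+\infty$ by \eqref{psi_inf}, the same "integrate an equivalence with divergent dominating integral" principle gives $\int_{\mu_0}^{\mu(t)}\beta(w)/\phi(w)^2\,dw\sim\tfrac1c\int_{\mu_0}^{\mu(t)}dw/\phi(w)=\tfrac1c\Psi(\mu(t))=\lambda t/c$. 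Therefore $2\phi(\mu(t))\int_{\mu_0}^{\mu(t)}\beta(w)/\phi(w)^2\,dw\sim 2\cdot\tfrac1{c\lambda t}\cdot\tfrac{\lambda t}{c}=\tfrac2{c^2}$, so $Var[S_t]\to 2/c^2$, establishing \eqref{expv}. Everything past the first step is bookkeeping with this integration principle together with the exact identities $\Psi(\mu(t))=\lambda t$ and $\mu'=\lambda\phi(\mu)$.
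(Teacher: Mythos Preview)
Your proof is correct and reaches the same central identity as the paper, namely $\phi(x)\Psi(x)\to 1/c$, but by a genuinely different and somewhat slicker route. The paper fixes $\epsilon>0$, extracts from \eqref{exptail} the two-sided bound $-c-\epsilon\le I'(x)/I(x)\le -c+\epsilon$ for $I=\int_x^\infty\phi$, integrates these to get exponential bounds on $I$ and hence on $\phi$, and then estimates $\int_{x_0}^x du/\phi(u)$ explicitly to squeeze $\phi(x)\Psi(x)$ between $(c\pm\epsilon)/(c\mp\epsilon)^2$. You avoid all that bookkeeping by introducing $\beta=\int_x^\infty\phi$ as an intermediate and applying L'H\^opital once to the $C^1$ pair $\int_{\mu_0}^x du/\beta(u)$ and $1/\beta(x)$, then passing to $\Psi=\int du/\phi$ via the standard ``integrate an asymptotic equivalence with divergent comparison integral'' principle; this sidesteps any need to differentiate $\phi$ or $1-F$, which \eqref{exptail} does not control. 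For the variance you again invoke this integration principle ($\beta/\phi^2\sim 1/(c\phi)$, hence $\int_{\mu_0}^{\mu(t)}\beta/\phi^2\sim\Psi(\mu(t))/c=\lambda t/c$), whereas the paper differentiates in $t$ and applies L'H\^opital. Your approach is more modular (one lemma used three times) and shorter; the paper's $\epsilon$-argument is more self-contained and does not require the reader to accept the integration-of-equivalences step as a black box.
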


\begin{proof}
Define
$$I(x)=\int_x^\infty \phi(u)du.$$	
By \eqref{exptail} we have
$$\lim_{x\rightarrow \infty} \frac{I'(x)}{I(x)}=-c.$$
Therefore, fixing any $\epsilon\in (0,c)$, we can chose 
$x_0>0$ so that 
\begin{equation}\label{limiz}
x\geq  x_0\;\;\Rightarrow\;\; -c-\epsilon\leq \frac{I'(x)}{I(x)}\leq -c+\epsilon. \end{equation}
Assume $x\geq u\geq x_0$. using \eqref{limiz}, we have
\begin{equation}\label{kin}
\ln\left(\frac{I(x)}{I(u)}\right)=\int_{u}^{x} \frac{I'(v)}{I(v)}dv \leq (-c+\epsilon)(x-u)\;\;\Rightarrow\;\;I(u)\geq I(x)e^{(c-\epsilon)(x-u)}. \end{equation}
Using \eqref{limiz} and \eqref{kin} we get
$$\phi(u)=-I'(u)\geq -(-c+\epsilon)I(u)\geq (c-\epsilon) I(x) e^{(c-\epsilon)(x-u)}\;\;\Rightarrow\;\;\frac{1}{\phi(u)}\leq \frac{1}{(c-\epsilon)I(x)}e^{-(c-\epsilon)(x-u)},$$
hence, for $x>x_0$,
\begin{equation}\label{inti}\int_{x_0}^{x} \frac{1}{\phi(u)}du\leq \frac{1}{(c-\epsilon)I(x)}\int_{x_0}^{x} e^{-(c-\epsilon)(x-u)}du
=\frac{1-e^{-(c-\epsilon)(x-x_0)}}{(c-\epsilon)^2I(x)}.\end{equation}
By \eqref{limiz} and the fact that $I'(x)<0$ we have
\begin{equation}\label{si}\frac{1}{I(x)}\leq -\frac{c+\epsilon}{I'(x)}=\frac{c+\epsilon}{\phi(x)},\end{equation}
and combining \eqref{inti} and \eqref{si} gives, for $x>x_0$,
$$\int_{x_0}^{x} \frac{1}{\phi(u)}du\leq \frac{1-e^{-(c-\epsilon)(x-x_0)}}{(c-\epsilon)^2}\frac{c+\epsilon}{\phi(x)}\;\;\Rightarrow\;\; \phi(x)\int_{x_0}^{x} \frac{1}{\phi(u)}du\leq \frac{(1-e^{-(c-\epsilon)(x-x_0)})(c+\epsilon)}{(c-\epsilon)^2}.$$
We conclude that
$$\limsup_{x\rightarrow \infty}\phi(x)\int_{x_0}^{x} \frac{1}{\phi(u)}du\leq \frac{c+\epsilon}{(c-\epsilon)^2}.$$
Since $\lim_{x\rightarrow \infty}\phi(x)=0$, this also implies
$$\limsup_{x\rightarrow \infty}\phi(x)\int_{\mu_0}^{x} \frac{1}{\phi(u)}du=\limsup_{x\rightarrow \infty}\left[\phi(x)\int_{\mu_0}^{x_0} \frac{1}{\phi(u)}du+\phi(x)\int_{x_0}^{\infty} \frac{1}{\phi(u)}du \right]\leq \frac{c+\epsilon}{(c-\epsilon)^2},$$
and since $\epsilon$ is arbitrary we conclude that
$$\limsup_{x\rightarrow \infty}\phi(x)\int_{\mu_0}^{x} \frac{1}{\phi(u)}du\leq \frac{1}{c}.$$
An analogous argument, using \eqref{limiz}, gives 
$$\liminf_{x\rightarrow \infty}\phi(x)\int_{\mu_0}^{x} \frac{1}{\phi(u)}du\geq \frac{1}{c},$$
so that we have
\begin{equation}\label{il}\lim_{x\rightarrow \infty}\phi(x)\Psi(x)=\lim_{x\rightarrow \infty}\phi(x)\int_{\mu_0}^{x} \frac{1}{\phi(u)}du= \frac{1}{c}.\end{equation}
Note in particular that \eqref{il} implies
$$\lim_{x\rightarrow \infty}\Psi(x)=+\infty\;\;\Rightarrow\;\; \lim_{t\rightarrow \infty}\mu(t)=\lim_{t\rightarrow \infty}\Psi^{-1}(\lambda t)=+\infty.$$
Substituting $x=\mu(t)$ in \eqref{il} and using \eqref{ode} and \eqref{ef} we get 
$$\lim_{t\rightarrow \infty}\mu'(t)t=\lim_{t\rightarrow \infty} \phi(\mu(t))\Psi(\mu(t))=\frac{1}{c}$$
and using L'H\^opital's rule we obtain
$$\lim_{t\rightarrow \infty} \frac{\mu(t)}{\ln(t)}=\lim_{t\rightarrow \infty}\frac{\mu(t)}{\frac{1}{t}}=\frac{1}{c},$$
so we have \eqref{expc}.

We now prove \eqref{expv}.
We have, using \eqref{exptail} and L'H\^opital's rule
\begin{align}\label{texpp}&\lim_{t\rightarrow \infty}\frac{1}{t}\int_{\mu_0}^{\mu(t)}\frac{1}{\phi(w)^2} \int_{w}^\infty \phi(x) dx dw=\lim_{t\rightarrow \infty}\frac{d}{dt}\int_{\mu_0}^{\mu(t)}\frac{1}{\phi(w)^2} \int_{w}^\infty \phi(x) dx dw\nonumber\\&=\lim_{t\rightarrow \infty}\mu'(t)\frac{1}{\phi(\mu(t))^2} \int_{\mu(t)}^\infty \phi(x) dx =\lim_{t\rightarrow \infty}\frac{\lambda \int_{\mu(t)}^\infty \phi(x) dx  }{\phi(\mu(t))} 
=\lim_{u\rightarrow \infty}\frac{\lambda \int_{u}^\infty \phi(x) dx  }{\phi(u)}=\frac{\lambda}{c}.
\end{align}
From \eqref{ode} and \eqref{expc}
we have 
\begin{equation}\label{texp1}\lim_{t\rightarrow \infty}t\phi(\mu(t))=\frac{1}{\lambda}\lim_{t\rightarrow \infty} t\mu'(t)=\frac{1}{\lambda c}.\end{equation}
Using \eqref{varf} and \eqref{texp1} we get
$$\lim_{t\rightarrow \infty}Var[S_t]=\lim_{t\rightarrow \infty}\phi(\mu(t)) \frac{Var[X_0]}{\phi(\mu_0)} +2 \lim_{t\rightarrow \infty}t\phi(\mu(t))\cdot \lim_{t\rightarrow \infty}\frac{1}{t}
\int_{\mu_0}^{\mu(t)}\frac{1}{\phi(w)^2} \int_{w}^\infty \phi(x) dx dw=0+2\frac{1}{\lambda c}\cdot \frac{\lambda}{ c}=\frac{2}{c^2}.$$

\end{proof}

As an example for the application of the above result, consider the Gamma distribution, with density
\begin{equation}\label{gammad}f(x)=\frac{1}{\Gamma(\alpha)\eta^\alpha}\cdot x^{\alpha-1}e^{-\frac{x}{\eta}},\;\;\;\alpha>0,\eta>0.\end{equation}
We have 
$\phi''(x)=[F(x)-1]'=f(x)$,
hence, using L`H\^opital's rule repeatedly
\begin{align*}\lim_{x\rightarrow \infty} \frac{\phi(x)}{\int_{x}^\infty \phi(u)du}&=-\lim_{x\rightarrow\infty}\frac{\phi'(x)}{\phi(x)}=-\lim_{x\rightarrow\infty}\frac{\phi''(x)}{\phi'(x)}=-\lim_{x\rightarrow\infty}\frac{\phi'''(x)}{\phi''(x)}=-\lim_{x\rightarrow\infty}\frac{f'(x)}{f(x)}\\&=-\lim_{x\rightarrow\infty}\frac{\left((\alpha-1)x^{\alpha-2}e-\frac{1}{\eta}x^{\alpha-1}\right)e^{-\frac{x}{\eta}}}{x^{\alpha-1}e^{-\frac{x}{\eta}}}=\frac{1}{\eta},\end{align*}
so \eqref{exptail} holds with $c=\frac{1}{\eta}$, and \eqref{expc},\eqref{expv} give
$$\mu(t)\sim \eta\ln(t),\;\;\lim_{t\rightarrow\infty}Var[S_t]=2\eta^2.$$
In particular, when $\alpha=1$ we get the exponential distribution, and the result is consistent with the exact expressions
\eqref{mu_exp}, \eqref{var_expo}.

\subsubsection{Offer distributions with power-law tails}

We now consider the case in which the offer distribution $F(x)$ has a power-law right tail behavior, in the sense made precise in the theorem below.

We will use some results from the theory of regularly varying functions. We recall \cite{Mladenovic} that a measurable function $R:[0,\infty)\rightarrow [0,\infty)$ is 
said to be {\it{regularly varying}} with exponent $\rho$, if 
$$\lim_{s\rightarrow \infty}\frac{R(xs)}{R(s)}=x^{\rho},\;\;\;\forall x>0.$$

\begin{thm}\label{pow}
Suppose $M=+\infty$ and the function $\phi(x)$ satisfies
\begin{equation}\label{alg}
\lim_{x\rightarrow \infty} \frac{x\phi(x)}{\int_x^{\infty}\phi(u)du}=p,\;\;\;p>0.
\end{equation}	
Then $\mu(t)$ is regularly varying with exponent $\frac{1}{p+2}$, and, in particular, we have
\begin{equation}\label{limi}\lim_{t\rightarrow \infty} \frac{\mu(t)}{t^c}=\begin{cases}
	+\infty & c< \frac{1}{p+2}\\
	0 & c>\frac{1}{p+2}\end{cases}.\end{equation}
	$Var[S_t]$, as a function of $t$ is regularly varying with exponent $\frac{2}{p+2}$. In particular, we have
	$$\lim_{t\rightarrow \infty}\frac{Var[S_t]}{t^c}=\begin{cases}
+\infty & c<\frac{2}{p+2}\\
0 & c>\frac{2}{p+2}
\end{cases}.$$
\end{thm}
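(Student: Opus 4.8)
The plan is to read off both asymptotics directly from the explicit description of $\mu$ — namely the ODE $\mu'(t)=\lambda\phi(\mu(t))$ together with the identity $\lambda t=\Psi(\mu(t))$ — and from the variance formula \eqref{varf}, converting everything into statements about regularly varying functions via Karamata's theorem and the representation theorem for slowly varying functions (which we take from the theory of regular variation, cf.\ \cite{Mladenovic}). Throughout, write $I(x)=\int_x^\infty\phi(u)\,du$, which is finite by hypothesis \eqref{alg}; since $E[X_+]<\infty$ the function $\phi$ is continuous, so $I$ is $C^1$ with $I'(x)=-\phi(x)$. The first task is to show that $\phi$, $I$, $1/\phi$ and $\Psi$ are themselves regularly varying. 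Hypothesis \eqref{alg} says $x(\ln I)'(x)=xI'(x)/I(x)\to-p$; writing $(\ln I)'(x)=(-p+\varepsilon(x))/x$ with $\varepsilon(x)\to0$ and integrating, the representation theorem gives $I(x)=x^{-p}L(x)$ with $L$ slowly varying, so $I$ is regularly varying with exponent $-p$. Then $\phi(x)=-I'(x)=(p-\varepsilon(x))\,I(x)/x\sim p\,I(x)/x$ is regularly varying with exponent $-(p+1)$, hence $1/\phi$ is regularly varying with exponent $p+1>-1$, and Karamata's theorem applied to $\Psi(x)=\int_{\mu_0}^x du/\phi(u)$ yields
$$\Psi(x)\sim\frac{1}{p+2}\cdot\frac{x}{\phi(x)}\qquad(x\to\infty),$$
so $\Psi$ is regularly varying with exponent $p+2$ and $\Psi(x)\phi(x)/x\to\frac{1}{p+2}$.

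Next I would treat $\mu$. Since $M=+\infty$ we have $\mu(t)\to+\infty$ by \eqref{ll}, and combining $\mu'(t)=\lambda\phi(\mu(t))$ with $\lambda t=\Psi(\mu(t))$ gives
$$\frac{t\,\mu'(t)}{\mu(t)}=\frac{\lambda t\,\phi(\mu(t))}{\mu(t)}=\frac{\Psi(\mu(t))\,\phi(\mu(t))}{\mu(t)}\longrightarrow\frac{1}{p+2}.$$
Integrating $(\ln\mu)'(t)$ and applying the representation theorem once more shows that $\mu$ is regularly varying with exponent $\frac{1}{p+2}$. The displayed limits \eqref{limi} are then the standard consequence that a regularly varying function of positive exponent $\alpha$, divided by $t^c$, tends to $+\infty$ when $c<\alpha$ and to $0$ when $c>\alpha$.

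For the variance I would start from \eqref{varf} written as $Var[S_t]=\phi(\mu(t))\big[\tfrac{Var[X_0]}{\phi(\mu_0)}+2g(t)\big]$ with $g(t)=\int_{\mu_0}^{\mu(t)}\frac{I(w)}{\phi(w)^2}\,dw$. The integrand $I(w)/\phi(w)^2$ is regularly varying in $w$ with exponent $-p+2(p+1)=p+2>-1$, so Karamata's theorem gives $\int_{\mu_0}^y I(w)\phi(w)^{-2}\,dw\sim\frac{1}{p+3}\,yI(y)\phi(y)^{-2}$, which is regularly varying (in $y$) with exponent $p+3$; composing with $\mu$ (regularly varying with exponent $\frac{1}{p+2}$ and tending to $\infty$) shows that $g$ is regularly varying with exponent $\frac{p+3}{p+2}>0$, so in particular $g(t)\to\infty$ and $\tfrac{Var[X_0]}{\phi(\mu_0)}+2g(t)\sim2g(t)$. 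Since $\phi(\mu(t))$ is regularly varying with exponent $-(p+1)\cdot\frac{1}{p+2}=-\frac{p+1}{p+2}$, we conclude $Var[S_t]\sim2\phi(\mu(t))g(t)$, regularly varying with exponent $-\frac{p+1}{p+2}+\frac{p+3}{p+2}=\frac{2}{p+2}$, and the stated limits for $Var[S_t]$ follow exactly as for $\mu$.

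The main obstacle I anticipate is the first step: passing from the convergence of the single ratio in \eqref{alg} to the assertion that $I$ (hence $\phi$, hence $1/\phi$, hence $\Psi$, hence $\mu$, hence $g$) is \emph{genuinely} regularly varying. This requires the Karamata representation theorem and some care with the $o(1)$ error terms both under integration (when deducing $I$ from $(\ln I)'$, and when applying Karamata to $\Psi$ and to $g$) and under composition. The remaining work is the routine but error-prone bookkeeping of exponents through products, quotients and compositions, where at each invocation of Karamata's theorem one must check that the relevant exponent exceeds $-1$, which it always does since $p>0$. One should also note at the outset that \eqref{varf} presupposes $E[X_0^2]<\infty$ and $E[X_+^2]<\infty$; the latter is automatic here because $\phi$ is regularly varying with exponent $-(p+1)<-1$.
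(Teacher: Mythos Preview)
Your argument is correct and follows essentially the same route as the paper: both proofs use the theory of regular variation to deduce from \eqref{alg} that $\phi\in RV_{-(p+1)}$, then propagate exponents through $1/\phi$, $\Psi$, $\mu$, $\phi\circ\mu$, and the inner integral in \eqref{varf} via Karamata-type results and composition. The one minor difference is in how the regular variation of $\mu$ is obtained: the paper invokes the inverse-function theorem for regularly varying functions (applied to $\Psi^{-1}$), whereas you use the ODE $\mu'=\lambda\phi(\mu)$ together with $\lambda t=\Psi(\mu(t))$ to compute $t\mu'(t)/\mu(t)\to\frac{1}{p+2}$ directly and then apply the representation theorem; both are standard and equivalent here.
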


\begin{proof}
By \cite{Mladenovic}, Theorem 1.2.1(c), the condition \eqref{alg} implies that
$\phi\in RV_{-p-1}$.
This implies that $\frac{1}{\phi}\in RV_{p+1}$, and by  \cite{Mladenovic}, Theorem 1.2.1(a) we have that $\Psi$ defined by 
\eqref{def_Psi} satisfies $\Psi\in RV_{p+2}$. Since $\Psi$ is increasing and $\Psi(+\infty)=+\infty$, 
we conclude by \cite{Mladenovic}, Theorem 1.4.7 that $\Psi^{-1}\in RV_{\frac{1}{p+2}}.$
This implies that $\mu(t)=\Psi^{-1}(\lambda t)\in RV_{\frac{1}{p+2}}$, as we wanted to prove. Therefore
$\frac{\mu(t)}{x^c}\in RV_{\frac{1}{p+2}-c}$, which implies \eqref{limi} (\cite{Mladenovic}, Proposition 1.2.11).

We now analyze $Var[S_t]$.
The composition of  a function  $R_1\in RV_{\rho_1}$ with a function $R_2\in RV_{\rho_2}$ with  $\lim_{x\rightarrow \infty} R_2(x)=+\infty$, satisfies $R_1\circ R_2\in RV_{\rho_1\cdot \rho_2}$.
Thus, since
$\mu\in RV_{\frac{1}{p+2}}$ and $\phi\in RV_{-p-1}$, we have
\begin{equation}\label{pim}g_1(t)=\phi(\mu(t))\in RV_{-\frac{p+1}{p+2}}. 
\end{equation}
Therefore, using \cite{Mladenovic} Th. 1.2.1(a),
$$g_2(w)=\int_w^\infty \phi(x)dx \in RV_{-p}.$$
Since $\phi\in RV_{-p-1}$, we have $\frac{1}{\phi(w)^2}\in RV_{2(p+1)}$. Therefore $$g_3(w)=\frac{1}{\phi(w)^2}\cdot g_2(w)\in RV_{p+2}.$$
Therefore, again using \cite{Mladenovic} Th. 1.2.1(a),
$$g_4(v)=\int_0^v g_3(w)dw=\int_0^v \frac{1}{\phi(w)^2}\int_w^\infty \phi(x)dx dw\in RV_{p+3},$$
which implies
$$g_5(t)=g_4(\mu(t))=\int_0^{\mu(t)} \frac{1}{\phi(w)^2}\int_w^\infty \phi(x)dx dw\in RV_{\frac{p+3}{p+2}}.$$ 
Together with 
\eqref{pim} we conclude that
$$g_6(t)=g_1(t)g_5(t)=\phi(\mu(t))\int_0^{\mu(t)} \frac{1}{\phi(w)^2}\int_w^\infty \phi(x)dx dw \in RV_{\frac{2}{p+2}},$$
and finally, using \eqref{varf},
$$Var[S_t]=\frac{Var[X_0]}{\phi(\mu_0)}\cdot g_1+g_5\in RV_{\frac{2}{p+2}}.$$

\end{proof}

Note that the Pareto distribution 
\eqref{pareto} 
satisfies \eqref{alg} with $p=\alpha-2$, so we conclude $\mu\in RV_{\frac{1}{\alpha}}$ and $Var[S_t]\in RV_{\frac{2}{\alpha}}$, as can also be seen directly from the explicit expressions \eqref{mu_pareto_g}, \eqref{var_pareto}.

As another example, consider the Fr\'echet distribution
\begin{equation}\label{Frechet}F(x)=e^{-x^{-\alpha}},\;\;\alpha>1.\end{equation}
We have, making repeated use of L'H\^opital's rule
\begin{align*}\lim_{x\rightarrow \infty} \frac{x\phi(x)}{\int_x^{\infty}\phi(u)du}
&=-\lim_{x\rightarrow \infty} \frac{\phi(x)+x\phi'(x)}{\phi(x)}
=-1-\lim_{x\rightarrow \infty} \frac{x\phi'(x)}{\phi(x)}=-1-\lim_{x\rightarrow \infty}\frac{\phi'(x)+x\phi''(x)}{\phi'(x)}\\&=-2+\lim_{x\rightarrow \infty}\frac{xF'(x)}{1-F(x)}=-2+\lim_{x\rightarrow \infty}\frac{\alpha x^{-\alpha}e^{-x^{-\alpha}}}{1-e^{-x^{-\alpha}}}=-2+\alpha,\end{align*}
so \eqref{alg} holds with $p=\alpha-2$, and we conclude from
Theorem \ref{pow} that $\mu(t)\in RV_{\frac{1}{\alpha}}$, and $Var[S_t]\in RV_{\frac{2}{\alpha}}$.

\subsection{Large $t$ limit of the distribution of $\hat{T}_t$}
\label{T_large}

In this subsection we are interested in
the existence and nature of a limiting distribution of the random variable $\hat{T}_t=\frac{1}{t}T_t$,
see \eqref{htt1},
\begin{equation}\label{hinf}\hat{H}_{\infty}(s)=\lim_{t\rightarrow\infty} \hat{H}_t(s),\;\;\;s\geq 0
\end{equation}
When this distribution exists, the random variables $\hat{T}_t$ converge in distribution to a random variable 
$\hat{T}_\infty$ with the CDF $\hat{H}_\infty$, which expresses the fraction of the marketing period which will be utilized when the marketing period is long.

In all three examples considered in subsection
\ref{examples_T} (see equations \eqref{limH1},\eqref{limH2},\eqref{limH3}) the limit \eqref{hinf} exists, with
\begin{equation}\label{limHg}\hat{H}_{\infty}(s)=\begin{cases}
1-(1-s)^{\gamma} & s<1\\
1 & s\geq 1
\end{cases},\end{equation}
where $\gamma=2$ in the case of a uniform distribution,
$\gamma=1$ in the case of an exponential distribution, and $\gamma=1-\frac{1}{\alpha}$ in the case of a
Pareto distribution with parameter $\alpha>1$. The next result shows that this is not a coincidence:

\begin{thm}\label{thmH}
The limiting distribution defined by \eqref{hinf} exists if and only if
$\mu'(t)$ is a regularly varying function of index $\rho\leq 0$, and in this case $\hat{H}_{\infty}(s)$ is of the form \eqref{limHg}, where 
$\gamma=-\rho$. In particular, in this case 
\begin{equation}\label{lgg}\lim_{t\rightarrow \infty}\frac{E[T_t]}{t}=\lim_{t\rightarrow \infty}E[\hat{T}_t]=\frac{1}{\gamma+1},\;\;\;\lim_{t\rightarrow \infty}\frac{Var[T_t]}{t^2}=\lim_{t\rightarrow \infty}Var[\hat{T}_t]=\frac{\gamma}{(\gamma+1)^2(\gamma+2)}.\end{equation}
\end{thm}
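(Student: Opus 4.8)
The plan is to reduce the statement to a property of $\mu'$ alone, using the identity $\mu'(\tau)=\lambda\phi(\mu(\tau))$ from Theorem \ref{optimal}, and then to apply the characterization theory of regularly varying functions. First I would rewrite the law of $\hat T_t$: since $\phi(\mu(\tau))=\tfrac1\lambda\mu'(\tau)$ for every $\tau$, the ratio appearing in \eqref{htt1} equals $\mu'(t)/\mu'((1-s)t)$, so that for $s\in[0,1)$,
\[
\hat H_t(s)=1-\frac{\mu'(t)}{\mu'\big((1-s)t\big)}.
\]
By Theorem \ref{char}, $\mu'$ is continuous (hence measurable), strictly positive, and — since $\mu$ is strictly concave by part (iv) — strictly decreasing; thus $\mu'((1-s)t)\geq\mu'(t)$ and the ratio lies in $[0,1]$, as it must. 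Writing $x=1-s\in(0,1]$, the existence of the limit \eqref{hinf} for every $s\geq0$ is therefore equivalent to the existence, for every $x\in(0,1]$, of
\[
L(x):=\lim_{t\to\infty}\frac{\mu'(xt)}{\mu'(t)}\ \in[1,\infty],
\]
and in that case $\hat H_\infty(s)=1-1/L(1-s)$ for $s<1$.

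For the ``if'' direction, if $\mu'\in RV_\rho$ (necessarily with $\rho\leq0$, since $\mu'$ is decreasing), then by the very definition of regular variation $L(x)=x^\rho$ for all $x>0$, so with $\gamma:=-\rho\geq0$ we obtain $\hat H_\infty(s)=1-(1-s)^\gamma$ for $s<1$, i.e.\ the form \eqref{limHg}. The substantive direction is the converse. Assume $L(x)$ exists in $[1,\infty]$ for all $x\in(0,1]$. Passing to ratios over arguments tending to $\infty$ shows $L$ is multiplicative, $L(xy)=L(x)L(y)$, and it is non-increasing; consequently $\{x\in(0,1]:L(x)<\infty\}$ is an interval with right endpoint $1$. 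If it contains some $[x_0,1]$ with $x_0<1$, then $L$ is finite and positive (being $\geq1$) on this set of positive Lebesgue measure, and the characterization theorem for regularly varying functions (see \cite{Mladenovic}) applies to the measurable positive function $\mu'$, yielding $\mu'\in RV_\rho$ with $L(x)=x^\rho$ and $\rho\leq0$. The only alternative is $L(x)=+\infty$ for all $x\in(0,1)$, i.e.\ $\mu'$ is rapidly varying; then $\hat H_\infty(s)=1$ for $s>0$ and $\hat H_\infty(0)=0$, which is the degenerate case of \eqref{limHg} with $\gamma=+\infty$ ($\hat T_\infty\equiv0$). I expect this step — extracting regular variation of $\mu'$ from the bare pointwise existence of the dilation limits — to be the main obstacle, and it is exactly here that measurability of $\mu'$ and the characterization theorem are indispensable.

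Finally, for \eqref{lgg}: when $\gamma\in(0,\infty)$ the distribution with CDF \eqref{limHg} is $\mathrm{Beta}(1,\gamma)$ on $[0,1]$, with mean $\tfrac1{\gamma+1}$ and variance $\tfrac{\gamma}{(\gamma+1)^2(\gamma+2)}$; the limiting cases $\gamma=0$ and $\gamma=+\infty$ give the point masses at $1$ and $0$, for which these two formulas still read correctly. Since $\hat T_t$ is confined to the bounded interval $[0,1]$, the convergence in distribution $\hat T_t\Rightarrow\hat T_\infty$ upgrades to convergence of the first and second moments (apply the portmanteau theorem to the bounded continuous test functions $s\mapsto s$ and $s\mapsto s^2$), so $E[\hat T_t]\to E[\hat T_\infty]$ and $\mathrm{Var}[\hat T_t]\to\mathrm{Var}[\hat T_\infty]$; recalling from \eqref{htt} that $\hat T_t=\tfrac1tT_t$, i.e.\ $E[\hat T_t]=E[T_t]/t$ and $\mathrm{Var}[\hat T_t]=\mathrm{Var}[T_t]/t^2$, this gives \eqref{lgg}. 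As a cross-check one could instead feed the regular variation of $\phi\circ\mu=\lambda^{-1}\mu'$ into the explicit formulas \eqref{ET}, \eqref{VT} and invoke Karamata's theorem, but the argument via boundedness is shorter.
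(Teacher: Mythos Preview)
Your proof is correct and follows the same route as the paper's: rewrite $\hat H_t$ via $\phi\circ\mu=\lambda^{-1}\mu'$, reduce existence of $\hat H_\infty$ to existence of the dilation limits $\mu'(xt)/\mu'(t)$, and invoke the characterization theorem for regular variation (the paper cites \cite{Mladenovic}, Theorems~1.1.8--1.1.9, for exactly this step). You are in fact more careful than the paper on two points --- you explicitly address the rapidly varying edge case ($\gamma=+\infty$), and you justify the passage from convergence in law to convergence of moments via boundedness of $\hat T_t\in[0,1]$, whereas the paper simply asserts that \eqref{lgg} follows by computing the moments of \eqref{limHg}.
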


\begin{proof}
Using \eqref{ode} we can write \eqref{htt1} as
\begin{equation}\label{jj}\hat{H}_t(s)=
\begin{cases}
	1-\frac{\mu'(t)}{\mu'((1-s)t)} & s<1\\
	1 & s\geq 1
\end{cases}.
\end{equation}
so that the limit \eqref{hinf} exists if and only if the limit 
$$\lim_{t\rightarrow \infty}\frac{\mu'(xt)}{\mu'(t)}$$
exists for $x\in [0,1)$. By \cite{Mladenovic}, Theorems 1.1.8,1.1.9, existence of these limits 
is equivalent to $\mu'(t)$ being regularly varying with some index $\rho$, which means that, for all $x\geq0$,
$$\lim_{t\rightarrow \infty}\frac{\mu'(xt)}{\mu'(t)}=x^\rho,$$
and since $\mu$ is concave (Theorem \ref{char}) so that $\mu'(t)$ is decreasing, $\rho\leq 0$. Therefore
$$\lim_{t\rightarrow \infty}\frac{\mu'(t)}{\mu'((1-s)t)}=(1-s)^{-\rho},$$
which, in view of \eqref{jj}, implies 
\eqref{limHg} with $\gamma=-\rho$.
\eqref{lgg} is obtained by computing the expectation and variance of the 
distribution \eqref{limHg}.
\end{proof}

The above theorem ensures that if the limit \eqref{hinf} exists it must be of the form \eqref{limHg}, but the existence of the limit hinges on  $\mu'(t)$ being of regular variation. 
We now show that this is the case for 
several classes of offer distributions.

\subsubsection{Offer distributions with  support bounded from the right}

\begin{thm}
Assume $M<\infty$, where $M$ is given by \eqref{def_M}, and that $F$ satisfies \eqref{edge} for some $p>0,c>0$.
Then \eqref{limHg} holds with $\gamma=1+\frac{1}{p}$. In particular, we have
\begin{equation}\label{lg1}\lim_{t\rightarrow \infty}\frac{E[T_t]}{t}=\lim_{t\rightarrow \infty}E[\hat{T}_t]=\frac{p}{2p+1},\;\;\;\lim_{t\rightarrow \infty}\frac{Var[T_t]}{t^2}=\lim_{t\rightarrow \infty}Var[\hat{T}_t]=\frac{p^2(p+1)}{(2p+1)^2(3p+1)}.\end{equation}

\end{thm}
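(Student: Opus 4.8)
The plan is to obtain this as a short corollary of Theorem~\ref{thmH} combined with the asymptotic estimates already established in the proof of Theorem~\ref{tep}. Recall that Theorem~\ref{thmH} asserts that the limiting distribution $\hat{H}_\infty$ exists, and then necessarily has the form \eqref{limHg} with $\gamma=-\rho$, exactly when $\mu'(t)$ is a regularly varying function of some index $\rho\leq 0$. So the entire task reduces to identifying the index of regular variation of $\mu'$.

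First I would use the identity $\mu'(t)=\lambda\phi(\mu(t))$ from \eqref{ode}, which turns the question into the regular variation of $t\mapsto\phi(\mu(t))$. Then I would invoke \eqref{ed3}, derived in the proof of Theorem~\ref{tep} under hypothesis \eqref{edge}, which gives
$$\lim_{t\rightarrow\infty}\frac{\phi(\mu(t))}{t^{-\frac{p+1}{p}}}=\frac{1}{(\lambda p)^{\frac{p+1}{p}}c^{\frac{1}{p}}},$$
so that $\mu'(t)\sim \lambda(\lambda p)^{-\frac{p+1}{p}}c^{-\frac1p}\,t^{-\frac{p+1}{p}}$ as $t\rightarrow\infty$. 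A function that is asymptotically equal to a constant multiple of $t^{\rho}$ is regularly varying of index $\rho$, since $\mu'(xt)/\mu'(t)\rightarrow x^{\rho}$ for every $x>0$; hence $\mu'$ is regularly varying of index $\rho=-\frac{p+1}{p}=-\bigl(1+\tfrac1p\bigr)\leq 0$. Applying Theorem~\ref{thmH} with $\gamma=-\rho=1+\tfrac1p$ then yields \eqref{limHg}.

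Finally, to get the constants in \eqref{lg1}, I would substitute $\gamma=\frac{p+1}{p}$ into the general formulas \eqref{lgg}, using $\gamma+1=\frac{2p+1}{p}$ and $\gamma+2=\frac{3p+1}{p}$; one checks $\frac{1}{\gamma+1}=\frac{p}{2p+1}$ and $\frac{\gamma}{(\gamma+1)^2(\gamma+2)}=\frac{p^2(p+1)}{(2p+1)^2(3p+1)}$, which is routine arithmetic.

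I do not anticipate a real obstacle: the substantive content is carried entirely by Theorems~\ref{tep} and~\ref{thmH}. The only point needing a word of justification is that $\mu'$ inherits regular variation from its power-law asymptotics, and this is immediate. (An alternative route would bypass \eqref{ed3} by arguing directly that $M-\mu(t)$ is regularly varying of index $-1/p$ and composing with the near-$M$ behavior of $\phi$ from \eqref{edge}, but citing \eqref{ed3} is the most economical option.)
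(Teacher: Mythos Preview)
Your proposal is correct and follows essentially the same route as the paper: both arguments hinge on invoking \eqref{ed3} from the proof of Theorem~\ref{tep} to obtain the power-law asymptotics of $\phi(\mu(t))$, then taking the ratio at $t$ and at a rescaled time. The only cosmetic difference is that you package the conclusion via Theorem~\ref{thmH} (regular variation of $\mu'$), whereas the paper computes $\lim_{t\to\infty}\phi(\mu(t))/\phi(\mu((1-s)t))=(1-s)^{(p+1)/p}$ directly and plugs into \eqref{htt1}; these are the same computation.
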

As an example, for the Beta distribution \eqref{Beta}, we have \eqref{edge} with 
$M=1,p=\beta,c=\frac{\Gamma(\alpha+\beta)}{\Gamma(\alpha)\Gamma(\beta+1)}$,
so \eqref{ba} gives \eqref{limHg} with $\gamma=\frac{1}{\beta}+1$.
In particular, when $\alpha=\beta=1$ we get the uniform distribution on $[0,1]$, and we recover \eqref{atu}, \eqref{limH1}.

\begin{proof}

Substituting $x=\mu(t)$ in \eqref{edge}
gives
\begin{equation}\label{lll1}
\lim_{t\rightarrow +\infty}\frac{\phi(\mu(t))}{(M-\mu(t))^{p+1}}=c.
\end{equation}
The assumption of the current 
theorem is identical to that of Theorem \ref{tep}, so that we have \eqref{ed3}, that is
\begin{equation}\label{bob1}\lim_{t\rightarrow +\infty}\frac{\phi(\mu(t))}{t^{-\frac{p+1}{p}}}=\frac{1}{(\lambda p)^{\frac{p+1}{p} }c^{\frac{1}{p}}}.\end{equation}
Substituting $(1-s)t$ instead of $t$
in \eqref{bob1} gives
\begin{equation}\label{bob2}\lim_{t\rightarrow +\infty}\frac{\phi(\mu((1-s)t))}{t^{-\frac{p+1}{p}}}=\frac{1}{(\lambda p)^{\frac{p+1}{p} }c^{\frac{1}{p}}}\cdot (1-s)^{-\frac{p+1}{p}}.\end{equation}
Combining \eqref{bob1} and \eqref{bob2} gives
$$\lim_{t\rightarrow +\infty}\frac{\phi(\mu(t))}{\phi(\mu((1-s)t))}=(1-s)^{\frac{p+1}{p}},$$
which, together with \eqref{htt1},
gives the result.
\end{proof}

\subsubsection{Offer distributions with 
exponential tails}

\begin{thm}
Assume \eqref{exptail} holds. Then 
\eqref{limHg} holds, with $\gamma=1$.
In particular, we have
\begin{equation}\label{lg2}\lim_{t\rightarrow \infty}\frac{E[T_t]}{t}=\lim_{t\rightarrow \infty}E[\hat{T}_t]=\frac{1}{2},\;\;\;\lim_{t\rightarrow \infty}\frac{Var[T_t]}{t^2}=\lim_{t\rightarrow \infty}Var[\hat{T}_t]=\frac{1}{12}.\end{equation}
\end{thm}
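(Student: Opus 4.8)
The strategy is to invoke the general characterization already established in Theorem \ref{thmH}, which has reduced the whole question to a statement about regular variation of $\mu'$: the limiting distribution $\hat{H}_\infty$ of \eqref{hinf} exists if and only if $\mu'(t)$ is regularly varying with some index $\rho\leq 0$, and in that case $\hat{H}_\infty$ has the form \eqref{limHg} with $\gamma=-\rho$. Thus the entire proof amounts to showing that, under the exponential-tail hypothesis \eqref{exptail}, the function $\mu'(t)$ is regularly varying of index exactly $-1$.

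To establish this I would simply quote Theorem \ref{thm:exptail}, whose hypothesis is precisely \eqref{exptail}: its conclusion \eqref{expc} states that $\mu'(t)\sim\frac{1}{ct}$ as $t\to\infty$. The function $t\mapsto\frac{1}{ct}$ is regularly varying with index $-1$, and any function asymptotically equivalent to a regularly varying function of index $\rho$ is itself regularly varying of the same index (immediate from the definition, since $\frac{\mu'(xt)}{\mu'(t)}=\frac{\mu'(xt)}{(cxt)^{-1}}\cdot\frac{(cxt)^{-1}}{(ct)^{-1}}\cdot\frac{(ct)^{-1}}{\mu'(t)}\to x^{-1}$). Hence $\mu'(t)$ is regularly varying of index $\rho=-1$; in particular $\rho\leq 0$, so the hypothesis of Theorem \ref{thmH} is met. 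Equivalently, one can read this off from \eqref{ode} together with \eqref{texp1}, which give $\mu'(t)=\lambda\phi(\mu(t))\sim\frac{1}{ct}$.

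Applying Theorem \ref{thmH} with $\gamma=-\rho=1$ then gives \eqref{limHg} with $\gamma=1$, i.e.\ $\hat{T}_\infty$ is uniformly distributed on $[0,1]$. Substituting $\gamma=1$ into the general mean and variance formulas \eqref{lgg} yields $\lim_{t\to\infty}E[T_t]/t=\tfrac12$ and $\lim_{t\to\infty}Var[T_t]/t^2=\tfrac1{12}$, which is \eqref{lg2}. I do not expect a genuine obstacle here: all the analytic difficulty — the $\varepsilon$-estimates on $\phi$ and on $\int_x^\infty\phi$ — was already disposed of in the proof of Theorem \ref{thm:exptail}. The only point requiring care is the bookkeeping, namely confirming that $\mu'(t)\sim(ct)^{-1}$ translates into regular variation of index $-1$ (so that one obtains $\gamma=1$, and not some other value), and that this index is nonpositive, exactly as Theorem \ref{thmH} requires.
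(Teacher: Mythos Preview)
Your proposal is correct and follows essentially the same route as the paper. Both arguments quote Theorem~\ref{thm:exptail} to obtain $\mu'(t)\sim 1/(ct)$; you then invoke Theorem~\ref{thmH} via the regular-variation criterion, while the paper uses the formula \eqref{jj} from the proof of that theorem to compute $\lim_{t\to\infty}\mu'(t)/\mu'((1-s)t)=1-s$ directly --- a purely cosmetic difference.
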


\begin{proof}
Using Theorem \ref{thm:exptail},
we have \eqref{expc}, so
$$\lim_{t\rightarrow\infty} t\mu'(t)=\frac{1}{c},\;\;\;\lim_{t\rightarrow\infty} (1-s)t\mu'((1-s)t)=\frac{1}{c}\;\;\;\Rightarrow\;\;\;\lim_{t\rightarrow\infty}\frac{\mu'(t)}{\mu'((1-s)t)}=1-s,$$
so \eqref{jj} gives \eqref{limHg} with 
$\gamma=1$.
\end{proof}

As an example, we have seen in 
subsection \ref{ofexp} that the Gamma distribution \eqref{gammad} satisfies \eqref{exptail}, so we conclude that 
\eqref{limHg} holds with $\gamma=1$.
In particular for the exponential distribution ($\alpha=1$ in \eqref{gammad}), we recover \eqref{texpa},\eqref{limH2}.

\subsubsection{Offer distributions with power-law tails}

\begin{thm}
Suppose the function $\phi(x)$ 
satisfies \eqref{alg}. Then \eqref{limHg} holds with $\gamma=\frac{p+1}{p+2}$.
In particular, we have
\begin{equation}\label{lg}\lim_{t\rightarrow \infty}\frac{E[T_t]}{t}=\lim_{t\rightarrow \infty}E[\hat{T}_t]=\frac{p+2}{2p+3},\;\;\;\lim_{t\rightarrow \infty}\frac{Var[T_t]}{t^2}=\lim_{t\rightarrow \infty}Var[\hat{T}_t]=\frac{(p+1)(p+2)^2}{(3p+5)(2p+3)^2}.\end{equation}
\end{thm}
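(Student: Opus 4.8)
The plan is to reduce the statement to Theorem~\ref{thmH}, which already establishes that the limiting distribution $\hat H_\infty$ exists and has the form \eqref{limHg} exactly when $\mu'(t)$ is regularly varying of some index $\rho\le 0$, in which case $\gamma=-\rho$ and the limits of $E[\hat T_t]$, $Var[\hat T_t]$ are given by \eqref{lgg}. So it suffices to show that, under assumption \eqref{alg}, the derivative $\mu'(t)$ is regularly varying with index $-\tfrac{p+1}{p+2}$.

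First I would recall from \eqref{ode} that $\mu'(t)=\lambda\,\phi(\mu(t))$, so $\mu'$ differs from the function $g_1(t)=\phi(\mu(t))$ only by the positive constant $\lambda$, which does not affect the index of regular variation. The needed regular-variation bookkeeping has already been done in the proof of Theorem~\ref{pow}: under \eqref{alg} one has $\phi\in RV_{-p-1}$ (by \cite{Mladenovic}, Theorem~1.2.1(c)), hence $\Psi\in RV_{p+2}$ and $\mu(t)=\Psi^{-1}(\lambda t)\in RV_{1/(p+2)}$ with $\mu(t)\to\infty$; composing a function in $RV_{\rho_1}$ with one in $RV_{\rho_2}$ tending to $+\infty$ gives a function in $RV_{\rho_1\rho_2}$, which yields exactly \eqref{pim}, i.e.\ $g_1=\phi\circ\mu\in RV_{-(p+1)/(p+2)}$. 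Therefore $\mu'=\lambda g_1\in RV_{-(p+1)/(p+2)}$. Since $p>0$, the index $\rho=-\tfrac{p+1}{p+2}$ is strictly negative, hence in particular $\rho\le 0$, so Theorem~\ref{thmH} applies and gives \eqref{limHg} with $\gamma=-\rho=\tfrac{p+1}{p+2}$.

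It then remains to deduce \eqref{lg} by substituting $\gamma=\tfrac{p+1}{p+2}$ into the general formulas \eqref{lgg}. Using $\gamma+1=\tfrac{2p+3}{p+2}$ and $\gamma+2=\tfrac{3p+5}{p+2}$ one gets $\tfrac{1}{\gamma+1}=\tfrac{p+2}{2p+3}$ and $\tfrac{\gamma}{(\gamma+1)^2(\gamma+2)}=\tfrac{(p+1)(p+2)^2}{(2p+3)^2(3p+5)}$, which are precisely the asserted limits of $E[\hat T_t]$ and $Var[\hat T_t]$; the corresponding statements for $E[T_t]/t$ and $Var[T_t]/t^2$ follow from the scaling $\hat T_t=T_t/t$, as already recorded in \eqref{lgg}. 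This last step is purely elementary algebra.

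There is essentially no real obstacle here: the theorem is a corollary of Theorem~\ref{thmH} together with the regular-variation computations already carried out for Theorem~\ref{pow}. The only points requiring a moment's care are (i) noting that \eqref{alg} implicitly forces $M=+\infty$ (otherwise $\phi$ vanishes for large $x$ and the limit in \eqref{alg} is meaningless), so that the setting of Theorem~\ref{pow} is genuinely in force, and (ii) verifying the sign condition $\rho\le 0$ that is needed to invoke Theorem~\ref{thmH}; both are immediate.
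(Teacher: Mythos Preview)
Your proposal is correct and follows essentially the same route as the paper's proof: the paper also invokes \eqref{ode} together with \eqref{pim} to conclude $\mu'(t)=\lambda\phi(\mu(t))\in RV_{-\frac{p+1}{p+2}}$, and then applies Theorem~\ref{thmH}. You are simply more explicit than the paper in spelling out the algebraic substitution $\gamma=\tfrac{p+1}{p+2}$ into \eqref{lgg} and in noting the implicit hypothesis $M=+\infty$.
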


\begin{proof}
By \eqref{ode} and \eqref{pim} we have 
$\mu'(t)=\lambda \phi(\mu(t))\in RV_{-\frac{p+1}{p+2}}$. By Theorem 
\ref{thmH}, this implies \eqref{limHg}, with $\gamma=\frac{p+1}{p+2}$.
\end{proof}
The Pareto distribution 
\eqref{pareto} 
satisfies \eqref{alg} with $p=\alpha-2$, so we conclude 
\eqref{limHg} holds with 
$\gamma=\frac{\alpha-1}{\alpha}$, recovering \eqref{as_pareto},\eqref{limH3}.

As another example, for the Fr\'echet 
distribution \eqref{Frechet}, we have
\eqref{alg} with $p=\alpha-2$, so that
\eqref{limHg} holds with 
$\gamma=\frac{\alpha-1}{\alpha}$.

\end{document}